 \newtheorem{definition}{Definition}[section]
 \newtheorem{theorem}[definition]{Theorem}
 \newtheorem{lemma}[definition]{Lemma}
 \newtheorem{proposition}[definition]{Proposition}
 \newtheorem*{theorem*}{Theorem}
\newtheorem*{proposition*}{Proposition}
\newtheorem*{lemma*}{Lemma}
 \theoremstyle{remark}
 \newtheorem{remark}[definition]{Remark}
  \newtheorem*{acknowledgements}{Acknowledgements}
\newcommand{\op}[1]{\operatorname{#1}}
\newcommand{\acou}[2]{\ensuremath{\langle #1 , #2 \rangle}}
\newcommand{\brak}[1]{\ensuremath{\langle\! #1\!\rangle}}
\newcommand{\tr}{\op{tr}}
\newcommand{\Tra}{\ensuremath{\op{Trace}}}
\newcommand{\TR}{\ensuremath{\op{TR}}}
\newcommand{\Res}{\ensuremath{\op{Res}}}
\newcommand{\C}{\ensuremath{\mathbb{C}}} 
\newcommand{\N}{\ensuremath{\mathbb{N}}} 
\newcommand{\R}{\ensuremath{\mathbb{R}}} 
\newcommand{\Z}{\ensuremath{\mathbb{Z}}} 
\newcommand{\CZ}{\ensuremath{\mathbb{C}\!\setminus\!\mathbb{Z}}} 
\newcommand{\Rn}{\ensuremath{\R^{n}}}
\newcommand{\Rtn}{\ensuremath{\R^{2n+1}}}
\newcommand{\URtn}{\ensuremath{U\times \R^{2n+1}}}
\newcommand{\ORn}{\ensuremath{\Omega\times \R^{2n+1}}}
\newcommand{\Rno}{\R^{n}\!\setminus\! 0}
\newcommand{\URn}{U\times\R^{n}}
\newcommand{\URno}{U\times(\R^{n}\!\setminus\! 0)}
\newcommand{\Rd}{\ensuremath{\R^{d+1}}}
\newcommand{\Rdo}{\R^{d+1}\!\setminus\! 0}
\newcommand{\URd}{U\times\R^{d+1}}
\newcommand{\fg}{\ensuremath{\mathfrak{g}}}
\newcommand{\fp}{\ensuremath{\mathfrak{p}}}
\newcommand{\fq}{\ensuremath{\mathfrak{q}}}
\newcommand{\fr}{\ensuremath{\mathfrak{r}}}
\newcommand{\fs}{\ensuremath{\mathfrak{s}}}
\newcommand{\Ca}[1]{\ensuremath{\mathcal{#1}}}
\newcommand{\cE}{\Ca{E}}
\newcommand{\cF}{\ensuremath{\mathcal{F}}}
\newcommand{\cG}{\ensuremath{\mathcal{G}}}
\newcommand{\cI}{\ensuremath{\mathcal{I}}}
\newcommand{\cJ}{\ensuremath{\mathcal{J}}}
\newcommand{\cK}{\ensuremath{\mathcal{K}}}
\newcommand{\cL}{\ensuremath{\mathcal{L}}}
\newcommand{\cN}{\ensuremath{\mathcal{N}}}
\newcommand{\cP}{\ensuremath{\mathcal{P}}}
\newcommand{\cT}{\ensuremath{\mathcal{T}}}
\newcommand{\sD}{\ensuremath{{/\!\!\!\!D}}}
\newcommand{\pdo}{\ensuremath{\Psi}} 
\newcommand{\pdoi}{\ensuremath{\Psi^{\op{int}}}} 
\newcommand{\pdoz}{\ensuremath{\Psi^{\Z}}} 
\newcommand{\pdocz}{\ensuremath{\Psi^{\CZ}}} 
\newcommand{\psido}{$\Psi$DO} 
\newcommand{\psidos}{$\Psi$DOs}
\newcommand{\psivdo}{$\Psi_{H}$DO}
\newcommand{\psivdos}{$\Psi_{H}$DOs}
\newcommand{\pvdo}{\ensuremath{\Psi_{H}}} 
\newcommand{\pvdoi}{\ensuremath{\Psi_{H}^{\op{int}}}} 
\newcommand{\pvdoz}{\ensuremath{\Psi_{H}^{\Z}}} 
\newcommand{\pvdocz}{\ensuremath{\Psi_{H}^{\CZ}}} 
\renewcommand{\Box}{\square}
\newcommand{\ord}{{\op{ord}}}
\newcommand{\End}{\ensuremath{\op{End}}}
\newcommand{\END}{\ensuremath{\op{END}}}
\newcommand{\hotimes}{\hat\otimes}
\begin{document}
\title{Logarithmic singularities of Schwartz kernels and local invariants of conformal and CR structures} 

\author{Rapha\"el Ponge}

\address{Department of Mathematics, University of Toronto, Canada.}
\email{ponge@math.toronto.edu}
\thanks{Research partially supported by JSPS fellowship PE06016 (Japan), by NSERC discovery grant 341328-07 (Canada) and 
by a New Staff Matching grant of the Connaught Fund of the University of Toronto (Canada)}
\keywords{conformal invariants, CR invariants, Fefferman's program, pseudodifferential operators, Green kernels.}
 \subjclass[2000]{Primary 53A55; Secondary 53A30, 32V20, 58J40}
\begin{abstract}
 This paper consists of two parts. In the first part we show that in odd dimension, as well as in even dimension below the critical weight (i.e.~half the 
 dimension), the logarithmic singularities of Schwartz kernels and Green kernels of conformal invariant 
 pseudodifferential operators are linear combinations of Weyl conformal invariants, i.e., of local conformal invariants arising from complete 
 tensorial contractions of the covariant derivatives of the Lorentz ambient metric of Fefferman-Graham. In even dimension and above the critical 
 weight exceptional local conformal invariants may further come into play.  As a consequence, this allows us to get invariant expressions 
 for the logarithmic singularities of the Green kernels of the GJMS operators (including the Yamabe and Paneitz operators). 
 In the second part,  we prove analogues of these results in CR geometry.   Namely, we prove that the logarithmic singularities of  
 Schwartz kernels and Green kernels of CR invariant Heisenberg pseudodifferential 
 operators give rise to local CR invariants, and below the critical weight are linear combinations of complete  tensorial contractions of the covariant 
 derivatives of Fefferman's K\"alher-Lorentz ambient metric. As a consequence, we can obtain invariant descriptions of the logarithmic 
 singularities of the Green kernels of the CR GJMS operators  of Gover-Graham (including the CR Yamabe operator of Jerison-Lee).  
 \end{abstract}

\maketitle 
\numberwithin{equation}{section}

 \maketitle 
    
\section*{Introduction}
Motivated by the analysis of the singularity of the Bergman kernel of a strictly pseudoconvex domain $D\subset \C^{n+1}$ Fefferman~\cite{Fe:PITCA} 
launched the program of determining \emph{all} local invariants of a strictly pseudoconvex CR structure. This program was subsequently 
extended to deal with local invariants of other parabolic geometries, including conformal geometry (see~\cite{FG:CI}). Since Fefferman's seminal 
paper further progress has been made, especially recently (see, e.g., \cite{Al:OCIDO}, \cite{BEG:ITCCRG}, \cite{GH:IAM}, \cite{Hi:CBILDBK}, 
\cite{Hi:AMCCRIDO}). In addition, there is a very recent upsurge of new conformally invariant Riemannian differential operators (see~\cite{Al:OCIDO}, 
\cite{Ju:FCDOQCH}).

In this paper we turn to the analysis of the logarithmic singularities of the Schwartz kernels and Green kernels of \emph{general} invariant 
pseudodifferential operators in conformal and CR geometry. This connects nicely with results of Hirachi~(\cite{Hi:CBILDBK}, \cite{Hi:AMCCRIDO}) 
on the logarithmic singularities of the Bergman and Szeg\"o kernels on boundaries of strictly pseudoconvex domains. 

The main result in the conformal case (Theorem~\ref{thm:GJMS.main-result}) asserts that in odd dimension, as well as in even dimension below the critical 
weight (i.e.~half of the dimension), the logarithmic singularities of Schwartz kernels and Green kernels of conformally invariant Riemannian 
\psidos\ are linear combinations of Weyl conformal invariants, that is, of local conformal invariants arising complete tensorial contractions of covariant derivatives 
of the ambient 
 Lorentz metric of Fefferman-Graham~(\cite{FG:CI}, \cite{FG:AM}). Above the critical weight the description in even dimension involve the ambiguity 
 independent Weyl conformal invariants recently defined by Graham-Hirachi~\cite{GH:IAM}, as well as the exceptional local conformal invariants of 
 Bailey-Gover~\cite{BG:EIPITCG}. 
 In particular, by specializing this result to the GJMS operators of~\cite{GJMS:CIPLIE}, including 
 the Yamabe and Paneitz operators, we obtain invariant expressions for the logarithmic singularities of the Green kernels of these operators 
 (see Theorem~\ref{thm:GJMS.GJMS}). 

In the CR setting the relevant class of pseudodifferential operators is the class of \psivdos\ introduced by Beals-Greiner~\cite{BG:CHM} and 
Taylor~\cite{Ta:NCMA}. In this context the main result (Theorem~\ref{thm:CR.main-result}) asserts that the logarithmic logarithmic singularities of 
Schwartz kernels and Green kernels of CR invariant \psivdos\ are local CR invariants, and below the critical weight are linear combinations of complete tensorial contractions 
of covariant derivatives of the curvature
of the ambient K\"ahler-Lorentz metric of Fefferman~\cite{Fe:PITCA}. As a consequence this allows us to get invariant expressions for the logarithmic 
singularities of the Green kernels of the CR GJMS operators of~\cite{GG:CRIPSL} (see Theorem~\ref{thm:CR.CRGJMS}). 

The proof of the main result in the conformal case is divided into three steps. In the first step we show that, given a \psido\ on a Riemannian 
manifold transforming 
conformally under a conformal change of metrics, the logarithmic singularity of its Schwartz kernel, as well as that of its Green kernel when the operator is 
elliptic, transform conformally under a conformal change of metrics (Proposition~\ref{prop:Conformal.main}). This result unifies and extends several previous results of 
Parker-Rosenberg~\cite{PR:ICL}, Gilkey~\cite{Gi:ITHEASIT} and Paycha-Rosenberg~\cite{PR:CACT}.

The second step is a Riemannian invariant version of the first step. Namely, we show that the logarithmic singularities of Schwartz kernels and  
Green kernels of Riemannian invariant \psidos\ are local Riemannian invariants, hence can expressed as linear combinations of complete contractions of 
covariant derivatives of the curvature tensor (see Proposition~\ref{prop:GJMS.Riemannian-invariant-log-sing} for the precise statement). 
This result is very much reminiscent of the Riemannian invariant expression of the coefficients of the heat kernel asymptotics of 
Laplace-type operators (see~\cite{ABP:OHEIT}, \cite{Gi:ITHEASIT}).

In odd dimension, as well as in even dimension below the critical weight, an important result of Bailey-Eastwood-Graham~\cite{BEG:ITCCRG} 
shows that all local conformal invariants as linear combinations of Weyl conformal invariants. Recently, in even dimension the remaining cases have been dealt with 
by Graham-Hirachi~\cite{GH:IAM}. Therefore, in  the final third step, we can simply combine these results with the results of the first two 
steps to deduce our main results in the conformal case.


Notice that thanks to the Ricci flatness of the ambient metric they are much fewer Weyl conformal 
invariant than Weyl Riemannian invariants.  Therefore, in the third step we get a more precise information on the forms of the logarithmic 
singularities at stake than the Riemannian invariant expressions provided by the second step.  

Next, the proof of the main result in the CR setting follows a similar pattern.  First, we prove that, given a \psivdo\ on a contact manifold which transforms 
conformally under a conformal change of contact form, the logarithmic singularities of 
its Schwartz kernel and its Green kernel (when the operator is hypoelliptic) transform conformally under a conformal change of contact form (see 
Proposition~\ref{prop:Contact.main}). This extends a previous result of N.K.~Stanton~\cite{St:SICRM}.

In the second step we deal with the logarithmic singularities of pseudohermitian invariant \psivdos\ (these objects are defined in Section~\ref{sec:Pseudohermitian}). 
More precisely, we show that the logarithmic singularities of the 
Schwartz kernels and the Green kernels of pseudohermitian invariant \psivdos\ are local pseudohermitian invariants (see 
Proposition~\ref{prop:CR.log-sing-pseudohermitian-invariants}).  
Therefore these logarithmic singularities appear as universal linear combinations of complete tensorial contractions of covariant derivatives of the 
(pseudohermitian) curvature tensor and of the torsion tensor of the Tanaka-Webster connection.

Similarly to a conformally invariant Riemannian \psido, a CR invariant \psivdo\ is a pseudohermitian invariant \psivdos\ that transforms 
conformally under a conformal change of contact form. Furthermore, we know from Fefferman~\cite{Fe:PITCA} and 
Bailey-Eastwood-Graham~\cite{BEG:ITCCRG} that any local CR invariant of weight less than or equal to the critical weight is linear combination of Weyl CR invariants. 
Combining this with the previous steps allows us to prove the main results in the CR case.

The first and third steps in the CR case are carried along similar lines as that of the corresponding steps in the conformal case. There are some 
technical issues with the second step because we need to introduce definitions of local pseudohermitian invariant and of pseudohermitian invariant \psivdos\ in such way 
that the former is equivalent to the usual definition of a local pseudohermitian invariant and both definitions are suitable for working with the Heisenberg calculus. 
In particular, it is important to take into account the tangent structure of a strictly pseudoconvex CR manifold, in which the Heisenberg group comes 
into play. The bulk of this step then is to prove all the properties of local pseudohermitian invariants and pseudohermitian invariant 
\psivdos\ that are needed in order to prove that the logarithmic singularities of the Schwartz kernels and the Green kernels of the latter do give 
rise to local pseudohermitian invariants. More generally, the arguments used in this step pave the way for proving that various local 
invariants attached to pseudohermitian invariant \psivdos\ (e.g.~local zeta function invariants) give rise to local pseudohermitian invariants. 

Finally, it is believed that by making use of the ambient metric construction of the GJMS operators in~\cite{GJMS:CIPLIE} 
we could compute the logarithmic singularities of these operators in the conformal case, as well as in the CR case. It is conjectured 
that there should be related in a somewhat 
explicit way to the coefficients of the heat kernel asymptotics of the Laplace operator, which has been thoroughly studied 
(see~\cite{Gi:ITHEASIT} and the references therein). We hope to report more on this in a subsequent paper.

The paper is organized as follows. 

In Section~\ref{sec:Log-sing}, we recall how the logarithmic singularity of a \psido\ gives rise to a well defined density. We then  
explain its connection with the noncommutative residue trace of Wodzicki and Guillemin. 

In Section~\ref{sec:Conformal-invariance}, 
we show that the conformal invariance of the logarithmic singularities of the Schwartz kernels and Green kernels of conformally invariant \psidos. 

In Section~\ref{sec:Riemannian} we show that the logarithmic singularities of the Schwartz kernels and Green kernels of
Riemannian invariant \psidos\ are local Riemannian invariants. 

In Section~\ref{sec:GJMS} we prove that the logarithmic singularities of the Schwartz kernels and Green kernels of 
conformally invariant \psidos\ are linear combinations of the local conformal invariants in the sense 
of Fefferman's program. In particular, this leads us to invariant expressions for the logarithmic singularities of the Green kernels of the GJMS operators.

In Section~\ref{sec:Heisenberg}, we recall some important facts about \psivdos\ and their logarithmic singularities. 

In Section~\ref{sec:contact-invariance}, we prove the contact invariance of the Schwartz kernels and Green kernels of contact invariant \psivdos.

In Section~\ref{sec:Pseudohermitian}, we define pseudohermitian invariant \psivdos\ and prove that their Schwartz kernels and Green kernels give rise to local 
pseudohermitian invariants. 

In Section~\ref{sec:CR-GJMS}, we prove that the Schwartz kernels and Green kernels of CR invariant \psivdos\ are linear combinations of Weyl CR 
invariants, which allows us to get invariant expressions for the  the Green kernels of the CR GJMS operators. 

\begin{acknowledgements}
    I am very grateful to Pierre Albin, Charles Fefferman, Robin Graham, Kengo Hirachi and Sidney Webster for helpful and stimulating 
    discussions related to the subject matter of this paper. 
    In addition, I wish to thank for their hospitality the University of Tokyo and the University of California at Berkeley where part of this 
    paper was written. 
\end{acknowledgements}

\section{Pseudodifferential Operators and the Logarithmic Singularities of their Schwartz Kernels}
\label{sec:Log-sing}
In this section we recall some definitions and properties about \psidos\ and the logarithmic singularities of the Schwartz kernels of \psidos.  

First, given an open subset $U\subset \R^{n}$ the symbols on $\URn$ are defined as follows. 

\begin{definition}1)  $S_{m}(\URn)$, $m\in\C$, is the space of functions 
    $p(x,\xi)$ contained in $C^{\infty}(U\times\Rno)$ such that $p(x,t\xi)=t^m p(x,\xi)$ for any $t>0$.\smallskip

2) $S^m(\URn)$,  $m\in\C$, consists of functions  $p\in C^{\infty}(\URn)$ with
an asymptotic expansion $ p \sim \sum_{j\geq 0} p_{m-j}$, $p_{k}\in S_{k}(\URn)$, in the sense that, for any integer $N$, 
any compact $K \subset U$ and any multi-orders $\alpha$, $\beta$, there exists a constant $C_{NK\alpha\beta}>0$ such that, 
for any $x\in K$ and any $\xi \in \Rd$ so that $|\xi | \geq 1$, we have
\begin{equation}
    | \partial^\alpha_{x}\partial^\beta_{\xi}(p-\sum_{j<N}p_{m-j})(x,\xi)| \leq 
    C_{NK\alpha\beta}|\xi|^{\Re m-\brak\beta -N}.
    \label{eq:PsiDOs.asymptotic-expansion-symbols}
\end{equation}
\end{definition}

Given a symbol $p\in S^{m}(\URn)$ we let $p(x,D)$ be the 
continuous linear operator from $C^{\infty}_{c}(U)$ to $C^{\infty}(U)$ such that 
    \begin{equation}
          p(x,D)u(x)= (2\pi)^{-n} \int e^{ix.\xi} p(x,\xi)\hat{u}(\xi)d\xi
    \qquad \forall u\in C^{\infty}_{c}(U).
\end{equation}

Let $M^{n}$ be a manifold and let $\cE$ be a vector bundle over $M$. 
We define \psidos\ on $M$ acting on the sections of $\cE$ as follows. 

\begin{definition}
 $\Psi^{m}(M,\cE)$, $m\in \C$, consists of continuous operators  $P$ from 
$C^{\infty}_{c}(M,\cE)$ to $C^{\infty}(M,\cE)$ such that:\smallskip

(i) The Schwartz kernel of $P$ is smooth off the diagonal;\smallskip 

(ii) In any trivializing local coordinates the operator $P$ can be written as 
\begin{equation}
    P=p(x,D)+R,
\end{equation}
where $p$ is a symbol of order $m$ and $R$ is a smoothing operator. 
\end{definition}

We can give a precise description of the singularity of the Schwartz kernel of a \psido\ near the diagonal and, in fact, the general form of these 
singularities can be used to characterize \psidos~(see, e.g., \cite{Ho:ALPDO3}, \cite{Me:APSIT}, 
\cite{BG:CHM}). In particular, if $P:C^{\infty}_{c}(M,\cE)\rightarrow C^{\infty}(M,\cE)$ if a \psido\ of integer order $m\geq -n$,
then in local coordinates its Schwartz kernel $k_{P}(x,y)$ has a behavior near the 
diagonal $y=x$ of the form
   \begin{equation}
        k_{P}(x,y)=\sum_{-(m+n)\leq j\leq -1}a_{j}(x,x-y) - c_{P}(x)\log |x-y| + 
        \op{O}(1), 
         \label{eq:Log-sing.log-sing}
    \end{equation}
    where $a_{j}(x,y)\in C^{\infty}(U\times (\Rno))$ is homogeneous of degree $j$ in $y$ and we have
 \begin{equation}
    c_{P}(x)=\frac{1}{(2\pi)^{n}}\int_{S^{n-1}}p_{-n}(x,\xi)d^{n-1}\xi,
     \label{eq:Log-sing.formula-cP}
\end{equation}
where $p_{-n}(x,\xi)$ is the symbol of degree $-n$ of $P$. 

It seems to have been first observed by Connes-Moscovici~\cite{CM:LIFNCG} (see~\cite{GVF:ENCG}, \cite{Po:NCR}) for detailed proofs) that the coefficient 
$c_{P}(x)$ makes sense globally on $M$ as a 1-density with values in $\End \cE$, i.e., it defines an element of $C^{\infty}(M,|\Lambda|(M)\otimes 
\End \cE)$ where $|\Lambda|(M)$ is the bundle of 1-densities on $M$. 

In the sequel we refer to the density $c_{P}(x)$ as the \emph{logarithmic singularity} of the Schwartz kernel of $P$. 

If $P$ is elliptic, then we shall call \emph{Green kernel for $P$} the Schwartz kernel of a parametrix $Q\in \Psi^{-m}(M,\cE)$ for $P$. 
Such a parametrix is uniquely defined only modulo smoothing operators, but the singularity near the diagonal 
of the Schwartz kernel of $Q$, including the logarithmic singularity 
$c_{Q}(x)$, does not depend on the choice of $Q$. 

\begin{definition}
    If $P\in \Psi^{m}(M,\cE)$, $m \in \Z$, is elliptic, then the Green kernel logarithmic singularity of $P$ is the density
    \begin{equation}
        \gamma_{P}(x):=c_{Q}(x),
    \end{equation}
    where $Q\in \Psi^{-m}(M,\cE)$ is any given parametrix for $P$. 
\end{definition}

Next, because of~(\ref{eq:Log-sing.formula-cP}) the density $c_{P}(x)$ is 
related to the noncommutative residue trace of Wodzicki~(\cite{Wo:LISA}, \cite{Wo:NCRF}) and Guillemin~\cite{Gu:NPWF} as follows. 

Let $\pdoi(M,\cE) = \cup_{\Re m < -n}\pdo^{m}(M,\cE)$ denote the class of \psidos\ whose symbols are integrable with respect to the $\xi$-variable. If 
$P$ is a \psido\ in this class then the restriction to the diagonal of its Schwartz kernel $k_{P}(x,y)$ defines a smooth $\End \cE$-valued density $k_{P}(x,x)$. 
Therefore, if $M$ is compact then $P$ is trace-class on $L^{2}(M,\cE)$ and we have
\begin{equation}
    \Tra P= \int_{M}k_{P}(x,x).
    \label{eq:Log-sing.trace}
 \end{equation}

In fact, the map $P\rightarrow k_{P}(x,x)$ admits an analytic continuation $P\rightarrow t_{P}(x)$ to the class  $\pdocz(M,\cE)$ of non-integer 
\psidos, where analyticity is meant with respect to holomorphic families of \psidos\ as in~\cite{Gu:GLD} and~\cite{KV:GDEO}. 
Furthermore, if $P \in \pdoz(M,\cE)$ and if $(P(z))_{z\in\C}$ is a holomorphic family of \psidos\ such that $\ord P(z)=\ord P +z$ and $P(0)=P$.
Then the map  $z\rightarrow t_{P(z)}(x)$ has at worst a simple pole 
 singularity at  $z=0$ in such way that
 \begin{equation}
     \Res_{z=0} t_{P(z)}(x)=- c_{P}(x).
     \label{eq:Log-sing.residue-tP(z)}
 \end{equation}

Suppose now that $M$ is compact. Then the \emph{noncommutative residue} is the linear functional on $\pdoz(M,\cE)$ defined by

\begin{equation}
    \Res P := \int_{M}\tr_{\cE}c_{P}(x) \qquad \forall P\in \pdocz(M,\cE).
\end{equation}
Thanks to~(\ref{eq:Log-sing.formula-cP}) this definition agrees with the usual definition of the noncommutative residue. Moreover, 
by using~(\ref{eq:Log-sing.residue-tP(z)}) we see that if $(P(z))_{z\in\C}$ is a holomorphic family of \psidos\ such that 
$\ord P(z)=\ord P +z$ and $P(0)=P$, then the map $z \rightarrow \Tra P(z)$ has an analytic extension to $\CZ$ with at worst a simple pole near $z=0$ 
in such way that
\begin{equation}
    \Res P=-\Res_{z=0} \TR P(z).
\end{equation}
 Using this it is not difficult to see that the noncommutative residue is a trace on $\pdoz(M,\cE)$. Wodzicki~\cite{Wo:PhD} even proved that his is 
 the unique trace up to constant multiple when $M$ is connected. 

Finally, let $P:C^{\infty}(M,\cE) \rightarrow C^{\infty}(M,\cE)$ be a \psido\ of integer order $m\geq 0$ with a positive principal symbol. For $t>0$ 
we let $k_{t}(x,y)$ denote the Schwartz kernel of $e^{-tP}$. Then $k_{t}(x,y)$ is a smooth kernel and  as $t\rightarrow 0^{+}$ we have
\begin{equation}
    k_{t}(x,x)\sim t^{-\frac{n}{m}}\sum_{j \geq 0} t^{\frac{j}{m}}a_{j}(P)(x)+\log t\sum_{j\geq 0}t^{j}b_{j}(P)(x), 
     \label{eq:Log-sing.heat-kernel-asymptotics}
\end{equation}
where we further have $a_{2j+1}(P)(x)=b_{j}(P)(x)=0$ for any $j=0,1,\ldots$ when $P$ is a differential operator 
(see, e.g.,~\cite{Gi:ITHEASIT}, \cite{Gr:AEHE}). 

By making use of the Mellin Formula we can explicitly relate the coefficients of the above heat kernel asymptotics to the singularities of the local 
zeta function $t_{P^{-s}}(x)$ (see, e.g.,~\cite[3.23]{Wo:NCRF}). In particular, if for $j=0,\ldots, n-1$ we set $\sigma_{j}=\frac{n-j}{m}$ then we have
\begin{equation}
mc_{P^{-\sigma_{j}}}(x)=\Gamma(\sigma_{j})^{-1}a_{j}(P)(x).
     \label{eq:Log-sing.Log-sing-heat-kernel}
\end{equation}

The above equalities provide us with an immediate connection between the Green kernel logarithmic singularity of $P$ 
and the heat kernel asymptotics~(\ref{eq:Log-sing.heat-kernel-asymptotics}). Indeed, as the partial inverse $P^{-1}$ is a parametrix for $P$ in 
$\Psi^{-m}(M,\cE)$,  setting $j=n-m$ in~(\ref{eq:Log-sing.Log-sing-heat-kernel}) gives
\begin{equation}
    a_{n-m}(P)(x)= mc_{P^{-1}}(x)= m\gamma_{P}(x) .
     \label{eq:Log-sing.Green-heat}
\end{equation}

\section{Conformal Invariance of Logarithmic Singularities of $\Psi$DOs}
\label{sec:Conformal-invariance} 
In this section we will prove that the logarithmic singularities of conformally invariant \psidos\ on a given Riemannian manifold $(M^{n},g)$ 
transform conformally under conformal changes of metric. 

Throughout this section we let $(M^{n},g)$ be a Riemannian manifold. 
The first historic instances of conformally invariant operator were the 
Dirac and Yamabe operators. 

If $M$ is spin and we let $\sD_{g}$ denote the Dirac operator of $M$ acting on spinors then Hitchin~\cite{Hi:HS} and 
Kosmann-Schwarbach~\cite{Ko:DELOD} proved that under a conformal change of metric $g\rightarrow e^{2f}g$, $f \in C^{\infty}(M,\R)$, we have
\begin{equation}
    \sD_{e^{2f}g}=e^{-\frac{n+1}{2}f}\sD_{g}e^{\frac{n-1}{2}f}.
\end{equation}

The Yamabe operator $\Box_{g}:C^{\infty}(M)\rightarrow 
C^{\infty}(M)$ is a perturbation of the  Laplace operator $\Delta_{g}$ in order to get a conformally invariant operator. It is given by 
\begin{gather}
       \Box_{g}=\Delta_{g}+\frac{n-2}{4(n-1)}\kappa_{g},
\end{gather}
where $\kappa_{g}$ is the scalar curvature of $M$, and it satisfies
\begin{equation}
     \Box_{e^{2f}g}=e^{-(\frac{n}{2}+1)f} \Box_{g}e^{(\frac{n}{2}-1)f} \qquad \forall f \in C^{\infty}(M,\R).
\end{equation}

This construction was generalized by Graham-Jenne-Mason-Sparling~\cite{GJMS:CIPLIE} (see also~\cite{GZ:SMCG})
who produced, for any integer $k\in \N$ when $n$ is odd, and for $k=1,\ldots,\frac{n}{2}$ when $n$ is even, a conformal $k$-th power of $\Delta_{g}$, i.e., 
a selfadjoint differential operator $\Box_{g}^{(k)}:C^{\infty}(M)\rightarrow C^{\infty}(M)$ such that
\begin{gather}
    \Box_{g}^{(k)}=\Delta_{g}^{(k)} + \ \text{lower order terms},
    \label{eq:Conformal.GJMS1}\\ 
          \Box_{e^{2f}g}^{(k)}=e^{-(\frac{n}{2}+k)f} \Box_{g}e^{(\frac{n}{2}-k)f} \qquad \forall f \in C^{\infty}(M,\R).
     \label{eq:Conformal.GJMS2}
\end{gather}
In particular, for $k=1$ we recover the Yamabe operator and for $k=2$ we recover the fourth order operator
of Paneitz~(\cite{Pa:QCCDOAPRM}, \cite{ES:CIMG}). 

There are further generalizations of the GJMS operators. Branson-Gover~\cite{BG:CINLO}
and Peterson~\cite{Pe:CCPDO} constructed families of conformally 
invariant \psidos\ which include the GJMS operators. In this case we have conformal invariance only up to smoothing operators. Recently, 
Alexakis~(\cite{Al:OCIDO}, \cite{Al:OCIDOOD}) and Juhl~\cite{Ju:FCDOQCH} constructed new families of conformally invariant operators. 
Furthermore, Alexakis proved that, under some restrictions, his family of operators exhausts \emph{all} conformally invariant \emph{Riemannian} differential operators. 

In the sequel we let $\cE$ denote a vector bundle over $M$ and we let $\cG$ be the class of Riemannian metrics on $M$ that are conformal multiples of 
$g$. 

Let $(P_{\hat{g}})_{\hat{g}\in\cG}\subset \Psi^{m}(M,\cE)$ be a family of  \psidos\  of integer order $m$ so that  there 
are real numbers $w$ and $w'$ in such way that, for any $f$ in $C^{\infty}(M,\R)$, we have 
\begin{equation}
    P_{e^{f}g}=e^{w'f} P_{g}e^{-wf} \quad \bmod \Psi^{-\infty}(M,\cE).
    \label{eq:Conformal.covariancePg}
\end{equation}

\begin{proposition}\label{prop:Conformal.main} 
1) If $m\geq -n$, then  
\begin{equation}
    c_{P_{e^{f}g}}(x)=e^{-(w-w')f(x)}c_{P_{g}}(x) \qquad \forall f\in C^{\infty}(M,\R).
\end{equation}

2) Assume that $P_{g}$ is elliptic and we have $0\leq m\leq n$, then 
\begin{equation}
    \gamma_{P_{e^{f}g}}(x)=e^{-(w'-w)f(x)}\gamma_{P_{g}}(x) \qquad \forall f\in C^{\infty}(M,\R).
\end{equation}
\end{proposition}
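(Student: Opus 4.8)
The plan is to isolate one computational fact and then deduce both parts formally. The fact is: if $P\in\Psi^{m}(M,\cE)$ has integer order $m\geq-n$ and $a,b\in C^{\infty}(M)$ act by multiplication (as $a\,\op{id}_{\cE}$ and $b\,\op{id}_{\cE}$), then $aPb\in\Psi^{m}(M,\cE)$ and
\begin{equation}
    c_{aPb}(x)=a(x)\,b(x)\,c_{P}(x).
    \label{eq:proposal-caPb}
\end{equation}
I would prove~(\ref{eq:proposal-caPb}) straight from the near-diagonal expansion~(\ref{eq:Log-sing.log-sing}). Since $aPb$ is again a \psido\ of order $m$, its Schwartz kernel $k_{aPb}(x,y)=a(x)k_{P}(x,y)b(y)$ admits an expansion of the form~(\ref{eq:Log-sing.log-sing}), so it suffices to identify its $\log|x-y|$ coefficient. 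Writing $b(y)=b(x)+\op{O}(|x-y|)$ near the diagonal, the logarithmic term of $k_{P}$ gets multiplied into $-a(x)b(y)c_{P}(x)\log|x-y|$, which near the diagonal equals $-a(x)b(x)c_{P}(x)\log|x-y|$ up to an $\op{O}(|x-y|\log|x-y|)=\op{O}(1)$ error; and the products $a(x)b(y)a_{j}(x,x-y)$ produce, after Taylor expanding $a(x)b(y)$ in $y$ at $x$, only terms homogeneous in $x-y$ of degree $\geq-(m+n)$ together with an $\op{O}(1)$ remainder, hence no logarithm. Therefore the $\log|x-y|$ coefficient of $k_{aPb}$ is $a(x)b(x)c_{P}(x)$, which is~(\ref{eq:proposal-caPb}). (Equivalently, via~(\ref{eq:Log-sing.formula-cP}): the degree $-n$ part of the symbol of $aPb$ is $a\,b\,p_{-n}$ together with terms proportional to $\partial_{\xi}^{\alpha}p_{|\alpha|-n}\cdot\partial_{x}^{\alpha}b$ with $|\alpha|\geq1$, and each such term is a $\xi$-derivative of a function homogeneous of degree $1-n$ and smooth off the origin, hence of vanishing mean over $S^{n-1}$.) Along the way I would record that $c_{P}$ is unchanged when $P$ is altered modulo $\Psi^{-\infty}(M,\cE)$, since a smoothing operator contributes only to the $\op{O}(1)$ term of~(\ref{eq:Log-sing.log-sing}).

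Part~1 then follows at once: by~(\ref{eq:Conformal.covariancePg}) we have $P_{e^{f}g}\equiv e^{w'f}P_{g}e^{-wf}\bmod\Psi^{-\infty}(M,\cE)$, so~(\ref{eq:proposal-caPb}) with $a=e^{w'f}$ and $b=e^{-wf}$ gives $c_{P_{e^{f}g}}(x)=e^{w'f(x)}e^{-wf(x)}c_{P_{g}}(x)=e^{-(w-w')f(x)}c_{P_{g}}(x)$. For Part~2, fix a parametrix $Q_{g}\in\Psi^{-m}(M,\cE)$ for the elliptic operator $P_{g}$, so $P_{g}Q_{g}\equiv Q_{g}P_{g}\equiv\op{id}\bmod\Psi^{-\infty}(M,\cE)$, and observe that $Q_{e^{f}g}:=e^{wf}Q_{g}e^{-w'f}\in\Psi^{-m}(M,\cE)$ is a parametrix for $P_{e^{f}g}$: using~(\ref{eq:Conformal.covariancePg}), that $e^{-wf}e^{wf}=e^{w'f}e^{-w'f}=\op{id}$, and that $\Psi^{-\infty}(M,\cE)$ is a two-sided ideal in the algebra of \psidos, one gets $P_{e^{f}g}Q_{e^{f}g}\equiv e^{w'f}(P_{g}Q_{g})e^{-w'f}\equiv\op{id}$ and, symmetrically, $Q_{e^{f}g}P_{e^{f}g}\equiv\op{id}$, both $\bmod\Psi^{-\infty}(M,\cE)$. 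Since $\gamma_{P_{e^{f}g}}$ may be computed from any parametrix, $\gamma_{P_{e^{f}g}}=c_{Q_{e^{f}g}}$, and $Q_{g}$ has order $-m\geq-n$ (because $m\leq n$); thus~(\ref{eq:proposal-caPb}) with $a=e^{wf}$ and $b=e^{-w'f}$ yields $\gamma_{P_{e^{f}g}}(x)=e^{wf(x)}e^{-w'f(x)}\gamma_{P_{g}}(x)=e^{-(w'-w)f(x)}\gamma_{P_{g}}(x)$.

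The only step requiring genuine care is~(\ref{eq:proposal-caPb}); the rest is formal manipulation with the composition calculus and the smoothing ideal. Through the Schwartz kernel~(\ref{eq:proposal-caPb}) is nearly immediate, the one point to verify before matching coefficients being that $aPb$ really belongs to $\Psi^{m}(M,\cE)$, so that its kernel does admit an expansion of the form~(\ref{eq:Log-sing.log-sing}).
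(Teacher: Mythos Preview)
Your proof is correct and follows essentially the same approach as the paper: both arguments work in local coordinates, use the kernel relation $k_{aPb}(x,y)=a(x)k_{P}(x,y)b(y)$, Taylor-expand the smooth factor in $y$ about $x$, and observe that only the constant term of that expansion contributes to the $\log|x-y|$ coefficient while the homogeneous terms $a_{j}$ absorb the higher Taylor terms into new homogeneous pieces plus an $\op{O}(1)$ remainder. Your packaging of this into the standalone identity $c_{aPb}(x)=a(x)b(x)c_{P}(x)$ is a clean abstraction, and your parenthetical symbolic argument (that the extra terms in the degree $-n$ symbol of $aPb$ are $\xi$-derivatives of symbols homogeneous of degree $1-n$ and hence have vanishing spherical mean) is a legitimate alternative the paper does not spell out; but the substance of the kernel computation and of the parametrix step in Part~2 is the same.
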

\begin{proof}
 Let $f \in C^{\infty}(M,\R)$, set $\hat{g}=e^{f}g$ and let $k_{P_{g}}(x,y)$ and $k_{P_{\hat{g}}}(x,y)$ denote the respective Schwartz kernels of 
 $P_{g}$ and $P_{\hat{g}}$. It follows from~(\ref{eq:Conformal.covariancePg}) that 
 near the diagonal $y=x$ we have
 \begin{equation}
     k_{P_{\hat{g}}}(x,y)=e^{w'f(x)} k_{P_{g}}(x,y) e^{-wf(y)}+\op{O}(1).
      \label{eq:CILS.kPg-kPhg}
 \end{equation}

Let $U\subset \Rn$ be an open of local coordinates. By~(\ref{eq:Log-sing.log-sing}) the kernel 
$k_{P_{g}}(x,y)$ has a behavior near the diagonal of the form
\begin{equation}
    k_{P_{g}}(x,y)=\sum_{-(m+n) \leq j \leq -1}a_{j}(x,y)-c_{P_{g}}(x)\log|x-y| +\op{O}(1),
\end{equation}
where $a_{j}(x,y)\in C^{\infty}(\URno)$ is homogeneous of degree $j$ with respect to $y$. Combining this with~(\ref{eq:CILS.kPg-kPhg}) then gives
\begin{equation}
    k_{P_{\hat{g}}}(x,y)=\sum_{-(m+n) \leq j \leq -1}b(x,y)a_{j}(x,y) -c_{P_{g}}(x)b(x,y)\log|x-y| +\op{O}(1),
     \label{eq:CILS.sing-kPhg}
\end{equation}
where we have set $b(x,y)=e^{-wf(y)+w'f(x)}$. 

The Taylor expansion of $b(x,y)$ near $y=x$ is of the form
\begin{equation}
    b(x,y)= \sum_{|\alpha|< m}(y-x)^{\alpha}b_{\alpha}(x) + \sum_{|\alpha|=m} (x-y)^{\alpha}r_{\alpha}(x,y),
    \label{eq:CILS.Taylor-b}
\end{equation}
where we have set $b_{\alpha}(x)=\frac{1}{\alpha!}\partial_{y}^{\alpha}b(x,x)$, and the functions $r_{\alpha}(x,y)$ are smooth near $y=x$. Using this 
we obtain
\begin{equation}
    b(x,y)a_{j}(x,y)=\sum_{|\alpha|+j \leq -1}b_{\alpha}(x)(y-x)^{\alpha}a_{j}(x,y) +\op{O}(1),
    \label{eq:CILS.sing-kPhg.homogeneous-terms}
\end{equation}
where each term $b_{\alpha}(x)(y-x)^{\alpha}a_{j}(x,y)$ is homogeneous in $y$ of degree $|\alpha|+j\leq -1$. 

Moreover, as we have 
$(x-y)^{\alpha}\log|x-y|=\op{O}(1)$  for any multi-order $\alpha\neq0$, from~(\ref{eq:CILS.Taylor-b}) we also get 
\begin{equation}
    b(x,y)\log|x-y|= b(x,x)\log|x-y|+\op{O}(1)=e^{-(w-w')f(x)}\log|x-y|+\op{O}(1).
     \label{eq:CILS.sing-kPhg.log-term}
\end{equation}

Combining~(\ref{eq:CILS.sing-kPhg}) with~(\ref{eq:CILS.sing-kPhg.homogeneous-terms}) and~(\ref{eq:CILS.sing-kPhg.log-term}) 
shows that $k_{P_{\hat{g}}}(x,y)$ has a behavior near the diagonal  of the form 
\begin{multline}
     k_{P_{\hat{g}}}(x,y)= \\ \sum_{-(m+n) \leq |\alpha|+j \leq -1}b_{\alpha}(x)(y-x)^{\alpha}a_{j}(x,y)  -c_{P_{g}}(x)e^{-(w-w')f(x)}\log|x-y|
     +\op{O}(1).
\end{multline}
Comparing this to~(\ref{eq:Log-sing.log-sing}) yields the equality $c_{P_{\hat{g}}}(x)=e^{-(w-w')f(x)}c_{P_{g}}(x)$. 

Now, assume that $P_{g}$ is elliptic and we have $m\leq n$. Let $Q_{g}$ (resp.~$Q_{\hat{g}}$) be a parametrix in $\Psi^{-m}(M,\cE)$ 
for $P_{g}$ (resp.~$P_{\hat{g}}$). Thanks to~(\ref{eq:Conformal.covariancePg}) we have
\begin{equation}
  P_{\hat{g}}e^{wf}Q_{g} e^{-w'f}= e^{w'f}P_{g}Q_{g}e^{-w'f}=1 \qquad \bmod \Psi^{-\infty}(M,\cE).
 \end{equation}
Multiplying the right-hand and left-hand sides by $Q_{\hat{g}}$ gives
\begin{equation}
    Q_{\hat{g}}= Q_{\hat{g}}P_{\hat{g}}e^{wf}Q_{g} e^{-w'f}= e^{wf}Q_{g} e^{-w'f} \qquad \bmod \Psi^{-\infty}(M,\cE).
     \label{eq:CILS.conformal-invariance-parametrix}
\end{equation}
We then can apply the first part of the proof to get $c_{Q_{\hat{g}}}(x)=e^{-(w'-w)f(x)}c_{P_{g}}(x)$. The proof is now complete.
\end{proof}

The above result unifies and extend several previous results of conformal invariance of densities associated to conformally 
invariant operators. 

First, in~\cite{PR:ICL} Parker-Rosenberg proved the conformal invariance on a compact manifold of the Green kernel of the Yamabe operator $\Box_{g}$ 
(i.e.~the Schwartz kernel of $\Box_{g}^{-1}$). In this setting the singularity near the diagonal of the Green kernel is derived from the knowledge of 
the off-diagonal small time asymptotics for the heat kernel of $\Box_{g}$. Moreover, the logarithmic singularity is described in the form 
$-c(x)\log d(x,y)$, where $d(x,y)$ is the Riemannian distance. Since in local coordinates $\log \frac{d(x,y)}{|x-y|}$ is bounded near $y=x$ this 
description of the logarithmic singularity is the same as that provided by~(\ref{eq:Log-sing.log-sing}). 
Therefore, we see that Proposition~\ref{prop:Conformal.main} 
allows us to recover 
Parker-Rosenberg's result. 

In fact, in~\cite{PR:ICL} the coefficient $c(x)$ in the logarithmic singularity $-c(x)\log d(x,y)$ was identified with the coefficient 
$a_{n-2}(\Box_{g})(x)$ of $t^{-1}$ in the heat kernel asymptotics~(\ref{eq:Log-sing.heat-kernel-asymptotics}) for $\Box_{g}$. 
This allowed Parker-Rosenberg to prove the conformal 
invariance of $a_{n-2}(\Box_{g})(x)$. Subsequently, Gilkey~\cite[Thm.~1.9.4]{Gi:ITHEASIT} proved the conformal 
invariance of the coefficient $a_{n-m}(P_{g})(x)$ of $t^{-1}$ in the heat kernel asymptotics for a conformally invariant selfadjoint elliptic differential operator 
$P_{g}$ of order $m$ with positive principal symbol on a compact Riemannian manifold. Thanks to~(\ref{eq:Log-sing.Green-heat}) 
we have $a_{n-m}(P_{g})(x)=m\gamma_{P_{g}}(x)$, 
so Proposition~\ref{prop:Conformal.main} also allows us to recover Gilkey's result. 

Recently Paycha-Rosenberg~\cite{PR:CACT} extended Gilkey's result to \psidos\ and proved the conformal invariance of noncommutative residue densities 
of conformally 
invariant \psidos. The arguments were based on variational formulas for  zeta functions of elliptic \psidos, so the result was stated for compact 
manifold and  for an elliptic conformally invariant \psidos\ such that there is a spectral cut independent of the metric for both the operator and its 
principal symbol. 

Since the density $c_{P}(x)$ agrees with the noncommutative residue density of $P$ it follows that the results of Paycha-Rosenberg 
are encapsulated by Proposition~\ref{prop:Conformal.main} and hold in full generality on noncompact manifold and for non-elliptic conformally invariant \psidos. 

Notice that it is important to be able to remove the ellipticity assumptions from the results of Gilkey and Paycha-Rosenberg, because we can construct examples 
of \emph{non-elliptic} conformally invariant \psidos. For instance, let $Q^{(k)}_{g}\in \Psi^{-2k}(M)$ be a parametrix for 
the GJMS operator $\boxdot^{(k)}_{g}$. Then by~(\ref{eq:Conformal.GJMS2}) and~(\ref{eq:CILS.conformal-invariance-parametrix}) we have 
\begin{equation}
    Q_{e^{2f}g}^{(k)}=e^{-(\frac{n}{2}-k)f}Q_{g}^{(k)}e^{(\frac{n}{2}+k)f} \qquad \forall f \in C^{\infty}(M,\R).
\end{equation}

Let $L_{g}:C^{\infty}(M)\rightarrow C^{\infty}(M)$ be a Weyl differential operator as constructed by Alexakis~(\cite{Al:OCIDO}, \cite{Al:OCIDOOD}) 
such that, for some $w'\in \Z$ we have $L_{e^{2f}}=e^{-2w'f}L_{g}e^{(\frac{n}{2}-k)f}$. Alexakis' construction shows that there is a handful of such operators. 
In addition, these operators need not be elliptic. Then the operator $L_{g}Q_{g}^{(k)}$ satisfies
\begin{equation}
    L_{e^{2f}g}Q_{e^{2f}g}^{(k)}= e^{-2w'f}L_{g}Q_{g}^{(k)}e^{(\frac{n}{2}+k)f} \qquad \forall f \in C^{\infty}(M,\R). 
\end{equation}
Furthermore, if we choose $L_{g}$ to be non-elliptic, then $L_{g}Q_{g}^{(k)}$ is not elliptic and we really need to use Proposition~\ref{prop:Conformal.main} to 
prove that 
\begin{equation}
    c_{L_{e^{f}g}Q_{e^{f}g}^{(k)}}(x)= e^{(\frac{n}{2}+k-2w')f} c_{L_{g}Q_{g}^{(k)}}(x)\qquad \forall f \in C^{\infty}(M,\R). 
     \label{eq:CILS.conformal-invariance-cLgQgk}
\end{equation}

\section{Logarithmic Singularities of Riemannian Invariant \psidos}
\label{sec:Riemannian}
In this section we shall prove that the logarithmic singularities of Riemannian invariant \psidos\ are local Riemannian invariants. 

Let $M_{n}(\R)_{+}$ denote the open subset of $M_{n}(\R)$ consisting of positive definite matrices. 
Following~\cite{ABP:OHEIT} we call \emph{scalar local Riemannian invariant} of weight $w$, $w\in \Z$, 
datum on any Riemannian manifold $(M^{n},g)$ of a function $\cI_{g}\in C^{\infty}(M)$ such that:\smallskip 

- There exist finitely many functions $a_{\alpha\beta}\in C^{\infty}(M_{n}(\R)_{+})$ such that in \emph{any} local coordinates we can write 
$\cI_{g}(x)=\sum a_{\alpha\beta}(g(x))(\partial^{\alpha}g(x))^{\beta}$.\smallskip

- We have $\cI_{t g}(x)= t^{-w}\cI_{g}(x)$ for any $t>0$.\smallskip

It follows from the invariant theory developed by Atiyah-Bott-Patodi~\cite{ABP:OHEIT} (see also~\cite{Gi:ITHEASIT}) that any local Riemannian invariant is a 
linear combination of complete contractions of the covariant derivatives of the curvature tensor. 

Notice also that the above definition continue to make sense for manifolds equipped with a nondegenerate metric of nonpositive signature, provided we replace 
$M_{n}(\R)_{+}$ by the subset of nondegenerate selfadjoint matrix of the corresponding signature. Following the convention of~\cite{FG:CI} we shall continue to
call local Riemannian invariants such invariants. 

Let $R_{ijkl}=\acou{R(\partial_{i},\partial_{j})\partial_{k}}{\partial_{l}}$ denote the components of the curvature tensor of $(M,g)$. We will use the metric 
$g=(g_{ij})$ and its inverse $g^{-1}=(g^{ij})$ to lower and raise indices. For instance, the Ricci tensor is 
$\rho_{jk}:=R^{\mbox{~}\mbox{~}\mbox{~}i}_{ijk}=g^{il}R_{ijkl}$ 
and the scalar curvature is $\kappa_{g}:=\rho^{~j}_{j}=g^{ji}\rho_{ij}$. 

All the scalar local Riemannian invariants of weight 1 are constant multiples of $\kappa_{g}$, and 
those of weight $2$ are linear combinations of the following invariants:
\begin{equation}
    |R|^{2}_{g}:=R^{ijkl}R_{ijkl}, \qquad |\rho|_{g}:=\rho^{ij}\rho_{jk}, \qquad |\kappa_{g}|^{2}_{g}, \qquad \Delta_{g}\kappa_{g}.
     \label{eq:GJMS.weight2-Riemannian-invariants}
\end{equation}

Next, for $m \in \C$ we let $S_{m}(M_{n}(\R)_{+}\times\Rn)$ denote the space of functions $a(g,\xi)$ in $C^{\infty}(M_{n}(\R)_{+}\times 
    (\Rno))$ such that we have $a(g,t\xi)=t^{m}a(g,\xi)$  for any $t>0$. 

\begin{definition}\label{def:GJMS.Riemannian-invariant-PsiDO}
    A Riemannian invariant \psido\ of order $m$ and weight $w$ is the datum on any Riemannian manifold $(M^{n},g)$ of an operator $P_{g}\in 
    \Psi^{m}(M)$ so that:\smallskip
    
    (i) For $j=0,1,\ldots$ there exist finitely  many symbols $a_{j\alpha\beta}\in S_{m-j}(M_{n}(\R)_{+}\times\Rn)$ such that in any local 
    coordinates $P_{g}$ has symbol $p_{g}(x,\xi)\sim \sum_{j\geq 0} p_{g,m-j}(x,\xi)$, where 
   \begin{equation}
        p_{g,m-j}(x,\xi)=\sum_{\alpha,\beta}(\partial^{\alpha}g(x))^{\beta}a_{j\alpha\beta}(g(x),\xi);
     \label{eq:Conformal.Riemannian-invariant-symbol}
   \end{equation}
    
    (ii) For any $t>0$ we have $P_{tg}=t^{-w}P_{g}$ modulo $\Psi^{-\infty}(M)$.\smallskip 

\noindent In addition, we say that $P$ is admissible if in~(\ref{eq:Conformal.Riemannian-invariant-symbol}) we can take $a_{0\alpha\beta}$ to be zero 
for $(\alpha,\beta)\neq 0$.
\end{definition}

\begin{remark}
    In~(ii) we require to have $P_{tg}=t^{-w}P_{g}$ modulo smoothing operators, rather than to have an actual 
    equality, so that if we replace $P_{g}$ by a properly supported \psido\ that agrees with $P_{g}$ modulo a smoothing operator, then we get a 
    Riemannian invariant \psido\ with same symbol. This way we can compose Riemannian invariant \psidos. This is totally innocuous when we consider 
    differential operators, because two differential operators that differ by a smoothing operator agree.
\end{remark}

\begin{proposition}\label{prop:GJMS.Riemannian-invariance-properties1}
  Let $P_{g}$ be a Riemannian invariant \psido\ of order $m$ and weight $w$, let $Q_{g}$ be a Riemannian \psido\ of order $m'$ and weight $w'$, and 
  suppose that $P_{g}$ or $Q_{g}$ is properly supported. Then $P_{g}Q_{g}$ is a Riemannian invariant 
  \psido\ of order $m+m'$ and weight $w+w'$.
\end{proposition}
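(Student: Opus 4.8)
The plan is to verify directly that the composition $P_gQ_g$ satisfies the two defining conditions of Definition~\ref{def:GJMS.Riemannian-invariant-PsiDO}, namely that its symbol has the prescribed Riemannian-invariant form in every local coordinate system, and that it scales with weight $w+w'$ under $g \mapsto tg$. The second condition is essentially immediate: modulo $\Psi^{-\infty}(M)$ we have $P_{tg}=t^{-w}P_g$ and $Q_{tg}=t^{-w'}Q_g$, and since one of the two operators is properly supported, the product is well defined and $P_{tg}Q_{tg}=t^{-(w+w')}P_gQ_g$ modulo smoothing operators; here one uses that composing with a smoothing operator (on the properly supported side) yields a smoothing operator, so the ambiguities are absorbed. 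The order statement $m+m'$ is just the standard composition of \psidos.

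The substance is condition~(i). First I would fix trivializing local coordinates $U\subset\R^n$ and write $P_g=p_g(x,D)+R$, $Q_g=q_g(x,D)+R'$ with $R,R'$ smoothing, so that $P_gQ_g$ has, in these coordinates, symbol given by the usual asymptotic composition formula
\begin{equation}
    (p_g \# q_g)(x,\xi) \sim \sum_{\gamma} \frac{1}{\gamma!}\,\partial_\xi^\gamma p_g(x,\xi)\,D_x^\gamma q_g(x,\xi).
\end{equation}
Grouping by homogeneity, the homogeneous component of degree $m+m'-j$ is a finite sum of terms $\frac{1}{\gamma!}\partial_\xi^\gamma p_{g,m-j_1}\cdot D_x^\gamma q_{g,m'-j_2}$ with $|\gamma|+j_1+j_2=j$. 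Now I substitute the hypothesized forms $p_{g,m-j_1}(x,\xi)=\sum (\partial^\alpha g(x))^\beta a_{j_1\alpha\beta}(g(x),\xi)$ and similarly for $q$. The key computation is to check that applying $\partial_\xi^\gamma$ and $D_x^\gamma$ preserves the structural form: $\partial_\xi^\gamma$ acts only on the $\xi$-argument of the $a$'s (sending $S_{m-j_1}$ to $S_{m-j_1-|\gamma|}$), while $D_x^\gamma$ hits $(\partial^\alpha g(x))^\beta$ and the $g(x)$-argument of $b_{j_2\alpha'\beta'}(g(x),\xi)$; by the chain rule and Leibniz rule each $x$-derivative produces again a polynomial in the partial derivatives of $g(x)$ times a smooth function of $g(x)$ (now involving $\partial$-derivatives of the $b$'s in the matrix argument) times a homogeneous symbol in $\xi$. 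Since $M_n(\R)_+$ is open, these matrix-argument derivatives of the $a$'s and $b$'s are again smooth functions on $M_n(\R)_+$. Collecting terms, the degree-$(m+m'-j)$ component is again of the form $\sum_{\alpha,\beta}(\partial^\alpha g(x))^\beta c_{j\alpha\beta}(g(x),\xi)$ with finitely many $c_{j\alpha\beta}\in S_{m+m'-j}(M_n(\R)_+\times\R^n)$, and — crucially — the $c_{j\alpha\beta}$ are universal, i.e.\ independent of the coordinate chart, because the composition formula, the chain rule and the Leibniz rule are coordinate-universal operations applied to the universal data $\{a_{j_1\alpha\beta}\}$, $\{b_{j_2\alpha'\beta'}\}$. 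One must also note that changing $p_g$ by a symbol of lower order (the inherent ambiguity in ``$p_g(x,D)$'') or changing $R$, $R'$ does not affect any $c_{j\alpha\beta}$.

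The main obstacle, and the point requiring care rather than cleverness, is precisely the bookkeeping of the chain rule in the matrix argument $g(x)$: each $\partial_{x_i}$ of $b_{j_2\alpha'\beta'}(g(x),\xi)$ contributes $\sum_{k\le l}(\partial_{x_i}g_{kl}(x))(\partial_{g_{kl}}b_{j_2\alpha'\beta'})(g(x),\xi)$, and iterating $D_x^\gamma$ produces a controlled but combinatorially involved sum of products of partials of $g$ times matrix-partials of the $b$'s. The honest work is to confirm that after full expansion everything reassembles into the finite sum $\sum_{\alpha,\beta}(\partial^\alpha g(x))^\beta(\cdots)$ of the required shape, with the ``$(\cdots)$'' genuinely only a function of $g(x)$ and $\xi$ and genuinely chart-independent. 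Once this normal-form persistence is established, reading off orders and weights is routine, and the proposition follows.
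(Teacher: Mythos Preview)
Your proposal is correct and follows essentially the same approach as the paper: verify the weight condition directly from $P_{tg}Q_{tg}=t^{-(w+w')}P_gQ_g$ modulo smoothing, then use the standard composition formula for symbols and check that the form~(\ref{eq:Conformal.Riemannian-invariant-symbol}) is preserved term by term. The paper's proof is in fact terser than yours---it simply states the formula~(\ref{eq:GJMS.product-symbols}) and asserts that ``it is not difficult to check'' that condition~(i) persists---so your discussion of the chain-rule/Leibniz bookkeeping in the matrix argument is exactly the detail the paper leaves to the reader.
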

\begin{proof}
  First, the operator $P_{g}Q_{g}$ is a \psido\ of order $m+m'$ and for 
   any $t>0$ we have $P_{tg}Q_{tg}=t^{-(w+w')}P_{g}Q_{g}$ modulo $\Psi^{-\infty}(M)$. 

   Next, let $p_{g}(x,\xi)\sim \sum p_{g,m-j}(x,\xi)$ and  let $q_{g}(x,\xi)\sim \sum q_{g,m'-j}(x,\xi)$ be the respective symbols of $P_{g}$ and 
   $Q_{g}$ in local 
   coordinates. Then it is well-known (see, e,g.,~\cite{Ho:ALPDO3}) that the symbol $r_{g}(x,\xi)\sim \sum r_{g,m'-j}(x,\xi)$ of $P_{g}Q_{g}$ is such that 
   we have $r_{g}(x,\xi) \sim \sum \frac{1}{\alpha!}\partial_{\xi}^{\alpha}p_{g}(x,\xi)D_{x}^{\alpha}q_{g}(x,\xi)$. Thus,
   \begin{equation}
       r_{g, m+m'-j}(x,\xi)= \sum_{|\alpha|+k+l=j} \frac{1}{\alpha!}\partial_{\xi}^{\alpha}p_{g,m-k}(x,\xi)D_{x}^{\alpha}q_{g,m'-l}(x,\xi).
       \label{eq:GJMS.product-symbols}
   \end{equation}
   By assumption $p_{g}(x,\xi)$ and $q_{g}(x,\xi)$ satisfy the condition (i) of Definition~\ref{def:GJMS.Riemannian-invariant-PsiDO}. Therefore, 
   using~(\ref{eq:GJMS.product-symbols}) it is not difficult to check that so does $r_{g}(x,\xi)$. Hence $P_{g}Q_{g}$ is 
   a Riemannian invariant \psido\ of weight $w+w'$.
\end{proof}

\begin{proposition}\label{prop:GJMS.Riemannian-invariance-properties2}
  Let $P_{g}$ be Riemannian invariant \psido\ of order $m$ and weight $w$ which is elliptic and is admissible in the sense of 
  Definition~\ref{def:GJMS.Riemannian-invariant-PsiDO}.  For 
  each Riemannian manifold $(M^{n},g)$ let $Q_{g}\in \Psi^{-m}(M,\cE)$ be a parametrix for $P$. Then $Q_{g}$ is a Riemannian invariant \psido\ of 
  weight $-w$. 
\end{proposition}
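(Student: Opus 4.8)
The plan is to build a parametrix $Q_g$ iteratively in local coordinates by the standard symbolic construction, and to check by induction that every symbol $q_{g,-m-j}(x,\xi)$ produced by that construction has the form required by condition (i) of Definition~\ref{def:GJMS.Riemannian-invariant-PsiDO}, namely $q_{g,-m-j}(x,\xi)=\sum_{\alpha,\beta}(\partial^\alpha g(x))^\beta b_{j\alpha\beta}(g(x),\xi)$ for finitely many symbols $b_{j\alpha\beta}\in S_{-m-j}(M_n(\R)_+\times\Rn)$. The homogeneity condition (ii), $Q_{tg}=t^w Q_g$ modulo smoothing, is immediate: since $P_{tg}=t^{-w}P_g$ mod $\Psi^{-\infty}$, the operator $t^w Q_g$ is a parametrix for $P_{tg}$, and parametrices are unique modulo smoothing operators, so $Q_{tg}=t^w Q_g$ mod $\Psi^{-\infty}(M)$.

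First I would recall the recursion. Writing $p_g\sim\sum_{j\ge 0}p_{g,m-j}$ and seeking $q_g\sim\sum_{j\ge 0}q_{g,-m-j}$ with $p_g\#q_g\sim 1$, one gets $q_{g,-m}=p_{g,m}^{-1}$ and, for $j\ge 1$,
\begin{equation}
q_{g,-m-j}=-p_{g,m}^{-1}\sum_{\substack{|\alpha|+k+l=j\\ l<j}}\frac{1}{\alpha!}\,\partial_\xi^\alpha p_{g,m-k}(x,\xi)\,D_x^\alpha q_{g,-m-l}(x,\xi).
\label{eq:parametrix-recursion}
\end{equation}
The admissibility hypothesis is exactly what makes the base case work: $p_{g,m}(x,\xi)=a_{000}(g(x),\xi)$ depends on $x$ only through $g(x)$, so $q_{g,-m}(x,\xi)=a_{000}(g(x),\xi)^{-1}$ is of the required form with no $\partial^\alpha g$ factors (by ellipticity $a_{000}(g,\cdot)^{-1}\in S_{-m}(M_n(\R)_+\times\Rno)$, smoothly in $g$). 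For the inductive step, each $p_{g,m-k}$ satisfies condition (i) by hypothesis, each $q_{g,-m-l}$ with $l<j$ satisfies it by the inductive hypothesis, and one only has to observe that the operations appearing in \eqref{eq:parametrix-recursion} — multiplying together such expressions, applying $\partial_\xi^\alpha$ (which only hits the $S_m$-factors), and applying $D_x^\alpha$ (which by the chain rule produces further factors $\partial^\gamma g(x)$ times derivatives of the $a$'s and $b$'s in the $g$-variable, all still smooth on $M_n(\R)_+$) — preserve the class of finite sums $\sum(\partial^\alpha g)^\beta(\text{smooth in }g)(\text{times an }S_m\text{-symbol})$. This is the same closure argument already used in the proof of Proposition~\ref{prop:GJMS.Riemannian-invariance-properties1}, now applied along the parametrix recursion instead of the composition formula. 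Finally I would invoke Borel's lemma to pick $q_g\sim\sum q_{g,-m-j}$ and set $Q_g=q_g(x,D)$ in each chart, patch with a partition of unity, and note that this $Q_g$ is a genuine parametrix and that changing it by a smoothing operator does not affect its being a Riemannian invariant \psido\ (cf.\ the Remark after Definition~\ref{def:GJMS.Riemannian-invariant-PsiDO}).

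The one genuine point requiring care — the main obstacle — is coordinate-invariance: condition (i) demands the representation $\sum(\partial^\alpha g)^\beta a_{j\alpha\beta}(g,\xi)$ in \emph{any} local coordinates, so one must check that the locally-constructed symbols glue to a globally-defined operator $Q_g\in\Psi^{-m}(M,\cE)$ and that the form of the symbols is coordinate-independent. The first is standard ($Q_g$ is a parametrix, and any two are equal mod $\Psi^{-\infty}$). For the second, the cleanest route is to note that one need not track the transformation law of individual $q_{g,-m-j}$ at all: since $P_g$ is assumed Riemannian invariant, its symbols already have the required form in every chart by hypothesis, the recursion \eqref{eq:parametrix-recursion} is formulated identically in every chart, and hence the same induction delivers the required form for $q_{g,-m-j}$ in every chart simultaneously. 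Thus coordinate-invariance is inherited from that of $P_g$ rather than proved afresh, and the substantive content of the proof is precisely the symbol-class closure check for \eqref{eq:parametrix-recursion}.
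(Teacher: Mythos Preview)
Your proposal is correct and follows essentially the same approach as the paper: derive the parametrix symbol recursion and verify inductively that each $q_{-m-j}$ has the required universal form, with the admissibility hypothesis furnishing the base case $q_{-m}(x,\xi)=a_m(g(x),\xi)^{-1}$. The paper uses the left parametrix relation $Q_gP_g=1$ rather than your $P_gQ_g=1$ (so $D_x^\alpha$ lands on the known symbols $p_{m-l}$ instead of on the inductively constructed $q_{-m-l}$), and it makes explicit one point you pass over quickly, namely that $a_m(g,\xi)$ is invertible for \emph{every} $g\in M_n(\R)_+$ because every such matrix occurs as the value of a Riemannian metric at some point (e.g., the constant metric $g$ on $\R^n$), so ellipticity on all Riemannian manifolds forces $a_m(g,\cdot)\neq 0$ throughout $M_n(\R)_+$.
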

\begin{proof}
  First, without any loss of generality we may assume $Q_{g}$ to be properly supported. Let $t>0$. As $P_{tg}=t^{-w}P_{g}$ modulo $\Psi^{-\infty}(M)$ 
  we see that $t^{w}Q_{g}$ is a parametrix for $P_{tg}$, hence it agrees with
  $Q_{tg}$ modulo $\Psi^{-\infty}(M)$. 
  
 Next, since $P_{g}$ is admissible there exists $a_{m}\in S_{m}(M_{n}(\Rn)_{+}\times \Rn)$ such that in any given local coordinates the principal symbol of 
  $P_{g}$ is $p_{m}(x,\xi)=a_{m}(g(x),\xi)$. The fact that $P_{g}$ is elliptic then implies that, for any Riemannian manifold $(M^{n},g)$ and for 
  $x$ in the range of the given local coordinates, we have $a_{m}(g(x),\xi)\neq 0$ for any $\xi\neq 0$. Since any matrix $g \in M_{n}(\Rn)_{+}$ defines a 
  Riemannian metric on $\Rn$, we see that $a_{m}(g,\xi) $ is an invertible symbol in $S_{m}(M_{n}(\Rn)_{+}\times \Rn)$. 
  
  Now, let $p\sim \sum p_{g,m-j}(x,\xi)$ and $q(x,\xi) \sim \sum_{j\geq 0}q_{-m-j}(x,\xi)$ be the respective symbols of $P_{g}$ and  
  $Q_{g}$ in local coordinates. As we have $Q_{g}P_{g}=1$ modulo 
  $\Psi^{-\infty}(M)$, using~(\ref{eq:GJMS.product-symbols}) we get
  \begin{equation}
      q_{-m}p_{g,m}=1, \qquad \sum_{|\alpha|+k+l=j} \frac{1}{\alpha!}\partial_{\xi}^{\alpha}q_{-m-k}D_{x}^{\alpha}p_{g,m-l}=0 \quad 
      j\geq 1.
      \label{eq:GJMS.symbol-parametrix1}
  \end{equation}
  Therefore, we obtain
  \begin{gather}
      q_{-m}(x,\xi)=p_{g,m}(x,\xi)^{-1}=a_{m}(g(x),\xi)^{-1},\\
   q_{-m-j}(x,\xi)= a_{m}(g(x),\xi)^{-1}  \!\!\!\!
    \sum_{\substack{|\alpha|+k+l=j\\ 
    k<j}}\!\!\!\!\frac{1}{\alpha!}\partial_{\xi}^{\alpha}q_{-m-k}(x,\xi)D_{x}^{\alpha}p_{m-l}(x,\xi)  \quad j\geq 1.
       \label{eq:GJMS.symbol-parametrix3}
  \end{gather}
  By induction we then can show that for $j=0,1,\ldots$ the symbol $q_{-m-j}(x,\xi)$ can be expressed as a universal expression of the 
  form~(\ref{eq:Conformal.Riemannian-invariant-symbol}).  This completes the proof that $Q_{g}$ is a Riemannian invariant \psido\ of weight $-w$.
 \end{proof}

 In the sequel for any top-degree form $\eta$ on $M$ we let $|\eta|$ denote the corresponding 1-density (or measure) defined by $\eta$. For instance, if 
 $v_{g}(x):=\sqrt{g(x)}dx^{1}\wedge \ldots \wedge dx^{n}$ is the Riemannian volume form, then the Riemannian density is 
 $|v_{g}(x)|$. In local coordinates we have $|v_{g}(x)|=\sqrt{g(x)}dx$, where $dx=|dx^{0}\wedge \ldots \wedge dx^{n}|$ is the Lebesgue measure of 
 $\R^{n}$. 

\begin{proposition}\label{prop:GJMS.Riemannian-invariant-log-sing}
   Let $P_{g}$ be a Riemannian invariant \psido\ of order $m$ and weight~$w$.\smallskip 
   
   1) The logarithmic singularity $c_{P_{g}}(x)$  is of the form
   \begin{equation}
       c_{P_{g}}(x)=\cI_{P_{g}}(x)|v_{g}(x)|,
   \end{equation}
   where $\cI_{P_{g}}(x)$ is a local Riemannian invariant of weight $\frac{n}{2}+w$.\smallskip
   
   2) Assume that $P_{g}$ is elliptic and is admissible in the sense of 
  Definition~\ref{def:GJMS.Riemannian-invariant-PsiDO}. Then the Green kernel logarithmic singularity of 
   $P$ takes the form 
   \begin{equation}
\gamma_{P_{g}}(x)=\cJ_{P_{g}}(x)|v_{g}(x)|,
   \end{equation}
   where $\cJ_{P_{g}}(x)$ is a local Riemannian invariant of weight $\frac{n}{2}-w$.  
\end{proposition}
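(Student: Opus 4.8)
My plan is to obtain $c_{P_g}(x)$ directly from the symbol formula~(\ref{eq:Log-sing.formula-cP}), to verify that dividing by the Riemannian volume density produces a function meeting the coordinate criterion in the definition of a scalar local Riemannian invariant, and then to quote the Atiyah--Bott--Patodi invariant theory; Part~2 should then reduce to Part~1 by Proposition~\ref{prop:GJMS.Riemannian-invariance-properties2}, which identifies a parametrix of $P_g$ as a Riemannian invariant \psido\ of weight $-w$.

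For Part~1, I would first dispose of the trivial case $m<-n$: then the Schwartz kernel of $P_g$ is continuous, $c_{P_g}\equiv 0$, and one takes $\cI_{P_g}=0$. Assuming $m\geq -n$, formula~(\ref{eq:Log-sing.formula-cP}) represents $c_{P_g}(x)$ in any local coordinates as the density $\bigl((2\pi)^{-n}\int_{S^{n-1}}p_{g,-n}(x,\xi)\,d^{n-1}\xi\bigr)\,dx$, where $p_{g,-n}$ is the homogeneous symbol component of degree $-n$, that is, $p_{g,m-j}$ with $j=m+n$. Substituting the special form~(\ref{eq:Conformal.Riemannian-invariant-symbol}) of $p_{g,m-j}$ and dividing by $|v_g(x)|=\sqrt{g(x)}\,dx$, I would set $\cI_{P_g}(x):=c_{P_g}(x)/|v_g(x)|$, which then equals $\sum_{\alpha,\beta}(\partial^{\alpha}g(x))^{\beta}a_{\alpha\beta}(g(x))$ with $a_{\alpha\beta}(g):=\tfrac{1}{\sqrt{g}}(2\pi)^{-n}\int_{S^{n-1}}a_{(m+n)\alpha\beta}(g,\xi)\,d^{n-1}\xi$. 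Since the $a_{(m+n)\alpha\beta}$ form a finite family of smooth symbols homogeneous of degree $-n$ in $\xi$, differentiation under the integral sign gives $a_{\alpha\beta}\in C^{\infty}(M_{n}(\R)_{+})$, and this family is finite; because $c_{P_g}$ is a genuine $1$-density (the Connes--Moscovici fact recalled in Section~\ref{sec:Log-sing}) and the operations above are universal, the same finitely many $a_{\alpha\beta}$ work in every chart, so $\cI_{P_g}$ satisfies the coordinate requirement. For the weight, I would use that altering $P_g$ by a smoothing operator leaves $c_{P_g}$ unchanged, so $P_{tg}=t^{-w}P_g$ mod $\Psi^{-\infty}(M)$ forces $c_{P_{tg}}=t^{-w}c_{P_g}$, while $|v_{tg}|=t^{n/2}|v_g|$; hence $\cI_{P_{tg}}=t^{-(n/2+w)}\cI_{P_g}$, so $\cI_{P_g}$ is a scalar local Riemannian invariant of weight $\tfrac{n}{2}+w$, and the Atiyah--Bott--Patodi invariant theory then exhibits it as a linear combination of complete contractions of covariant derivatives of the curvature tensor.

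For Part~2, assuming $P_g$ elliptic and admissible, Proposition~\ref{prop:GJMS.Riemannian-invariance-properties2} shows that any parametrix $Q_g\in\Psi^{-m}(M)$ for $P_g$ is again a Riemannian invariant \psido, of order $-m$ and weight $-w$. Since $\gamma_{P_g}(x)=c_{Q_g}(x)$ by definition, and this is independent of the choice of parametrix, applying Part~1 to $Q_g$ gives $\gamma_{P_g}(x)=\cJ_{P_g}(x)|v_g(x)|$ with $\cJ_{P_g}:=\cI_{Q_g}$ a scalar local Riemannian invariant of weight $\tfrac{n}{2}-w$ (the case $m>n$ being trivial again, with both sides zero).

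I expect the only genuinely delicate point to be the verification in Part~1 that $\cI_{P_g}$ meets the \emph{precise} definition of a local Riemannian invariant, namely that its coordinate expansion is governed by a single coordinate-independent finite family $\{a_{\alpha\beta}\}\subset C^{\infty}(M_{n}(\R)_{+})$; this rests on the universality built into condition~(i) of Definition~\ref{def:GJMS.Riemannian-invariant-PsiDO} together with the fact that $c_{P_g}$ is an honest density rather than a merely coordinate-dependent expression. Everything else is routine bookkeeping, and Part~2 then follows immediately from Part~1 and Proposition~\ref{prop:GJMS.Riemannian-invariance-properties2}.
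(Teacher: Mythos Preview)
Your proposal is correct and follows essentially the same approach as the paper: compute $\cI_{P_g}$ by inserting the universal form~(\ref{eq:Conformal.Riemannian-invariant-symbol}) of the degree $-n$ symbol into~(\ref{eq:Log-sing.formula-cP}) and dividing by $\sqrt{g(x)}\,dx$, verify the weight via $P_{tg}=t^{-w}P_g$ modulo smoothing and $|v_{tg}|=t^{n/2}|v_g|$, and deduce Part~2 from Part~1 via Proposition~\ref{prop:GJMS.Riemannian-invariance-properties2}. The only cosmetic difference is that the paper keeps the factor $1/\sqrt{g(x)}$ outside the sum rather than absorbing it into the universal coefficients $a_{\alpha\beta}$, but this is immaterial.
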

\begin{proof}
 Let us write $c_{P_{g}}(x)=\cI_{g}(x)|v_{g}(x)|$. Let $t>0$. Since $P_{g}$ and $t^{-w}P_{g}$ agree up to a smoothing operator we have 
 $t^{-w}c_{P_{g}}(x)=c_{P_{tg}}(x)$. As $dv_{tg}(x)=t^{\frac{n}{2}}|v_{g}(x)|$ we see that  $\cI_{P_{tg}}(x)=t^{-(\frac{n}{2}+w)}\cI_{P_{g}}(x)$.
 
 On the other hand, since $P_{g}$ is a Riemannian invariant \psido\ there exist finitely many symbols 
 $a_{\alpha\beta}\in S_{m-j}(M_{n}(\R)_{+}\times\Rn)$ such that in any local coordinates the symbol of degree $-n$ of $P_{g}$ is 
 $p_{-n}(x,\xi)=\sum (\partial^{\alpha}g(x))^{\beta}a_{\alpha\beta}(g(x),\xi)$. By~(\ref{eq:Log-sing.formula-cP}) in local coordinates we have 
 $c_{P_{g}}(x)=\cI_{g}(x)\sqrt{g(x)}dx=(2\pi)^{-n}(\int_{S^{n-1}}p_{-n}(x,\xi)d^{n-1}\xi )dx$. Thus, 
 \begin{equation}
     \cI_{g}(x)= \frac{1}{\sqrt{g(x)}}\sum  (\partial^{\alpha}g(x))^{\beta} A_{\alpha\beta}(g(x)),
           \label{eq:GJMS.local-expression-cIg(x)}
 \end{equation}
where $A_{\alpha\beta}$ is the smooth function on $M_{n}(\R)_{+}$ defined by 
\begin{equation}
    A_{\alpha\beta}(g):= (2\pi)^{-n}\int_{S^{n-1}}a_{\alpha\beta}(g,\xi)d^{n-1}\xi \qquad \forall g\in M_{n}(\R)_{+} .
\end{equation}
Since the expression~(\ref{eq:GJMS.local-expression-cIg(x)}) 
of $\cI_{g}(x)$ holds in \emph{any} local coordinates this proves that $\cI_{g}(x)$ is a local Riemannian invariant. 

Finally, suppose that $P_{g}$ is elliptic  and is admissible. For each Riemannian manifold 
  $(M^{n},g)$ let $Q_{g}\in \Psi^{-m}(M)$ be a parametrix for $P_{g}$. Then the Green kernel logarithmic singularity $\gamma_{P_{g}}(x)$ agrees with 
   $c_{Q_{g}}(x)$ and Proposition~\ref{prop:GJMS.Riemannian-invariance-properties2} tells us that 
  $Q_{g}$ is a Riemannian invariant \psido\ of weight $-w$. Therefore, it follows from the first part of the proposition that $\gamma_{P_{g}}(x)$ is of 
  the form $\gamma_{P_{g}}(x)=\cJ_{P_{g}}(x)|v_{g}(x)|$, where $\cJ_{P_{g}}(x)$ is a local Riemannian invariant of weight $\frac{n}{2}-w$. 
\end{proof}

\section{Logarithmic Singularities and Local Conformal Invariants}
\label{sec:GJMS}
In this section we shall make use of the program of Fefferman in conformal geometry to give a precise form of the logarithmic singularities of 
conformally invariant Riemannian \psidos. 

\subsection{Conformal invariants and Fefferman's program}
Motivated by the analysis of the singularity of the Bergman kernel of a strictly pseudoconvex domain $D\subset \C^{n+1}$ Fefferman~\cite{Fe:PITCA} 
launched the program of determining all local invariants of a strictly pseudoconvex CR structure. This  was subsequently 
extended to conformal geometry and to more general parabolic geometries (see, e.g.,~\cite{FG:CI}). 

A  \emph{scalar local conformal invariant} of weight $w$ is a scalar local Riemannian invariant $\cI_{g}(x)$ such that
\begin{equation}
     \cI_{e^{f} g}(x)= e^{-wf(x)}\cI_{g}(x) \qquad \forall f\in C^{\infty}(M,\R).
\end{equation}

The most important conformally invariant tensor is the Weyl curvature,
\begin{equation}
    W_{ijkl}=R_{ijkl}-(P_{jk}g_{il}+P_{il}g_{jk}-P_{jl}g_{ik}-P_{ik}g_{jl}), 
\end{equation}
where $P_{jk}=\frac{1}{n-2}(\rho_{jk}-\frac{\kappa_{g}}{2(n-1)}g_{jk})$ denotes the Schouten tensor. The Weyl tensor is conformally invariant of weight 1, 
so we get scalar conformal invariants by taking complete tensorial contractions. For instance as invariant of weight $2$ we get
\begin{equation}
    |W|^{2}=W^{ijkl}W_{ijkl},
\end{equation}
and as invariants of weight 3 we have 
\begin{equation}
    W_{ij}^{\mbox{~}\mbox{~}kl}W_{lk}^{\mbox{~}\mbox{~}pq}W_{pq}^{\mbox{~}\mbox{~}ij}\qquad \text{and} \qquad 
    W_{i\mbox{~}\mbox{~}l}^{~jk}W^{i\mbox{~}\mbox{~}q}_{~pk}W_{j\mbox{~}\mbox{~}q}^{~pl}.
     \label{eq:GJMS.weight3-conformal-invariants}
\end{equation}

The aim of the program of Fefferman in conformal geometry is to exhibit a basis of local conformal invariants. It was initially conjectured that 
such a basis should involve the Weyl conformal invariants defined in terms of the Lorentz ambient metric of Fefferman-Graham~(\cite{FG:CI}, \cite{FG:AM}) as follows. 

Let $\cG$ be the $\R_{+}$-bundle of metrics defined by the conformal class of $g$. We identify $\cG$ 
with the hypersurface $\cG_{0}=\cG\times\{0\}$ in $\tilde{\cG}=\cG\times (-1,1)$. The ambient metric then is a Ricci-flat Lorentzian metric $\tilde{g}$ on 
$\tilde{\cG}$ defined formally near $\cG_{0}$. In odd dimension the jets of the ambient metric are defined at any order near $\cG_{0}$, but in even dimension 
there is an obstruction for defining them at order~$\geq \frac{n}{2}$. In any case, the local Riemannian invariants of $\tilde{g}$ on $\tilde{\cG}$ 
push down to conformal invariants of $g$ on $M$. The latter are the \emph{Weyl conformal invariants}. 

For instance the Weyl curvature corresponds to the ambient curvature tensor $\tilde{R}$. Moreover, the Ricci flatness of $\tilde{g}$ and the Bianchi identities 
imply that complete tensorial contractions covariant derivatives of $\tilde{R}$ involving internal traces must vanish. 
For instance, there is no scalar Weyl conformal invariant of weight 1 
(in fact there is no scalar conformal invariant of weight 1 at all) and the only non-zero scalar Weyl conformal invariant of weight 2 is $|W|^{2}$, 
which arises from the ambient invariant $|\tilde{R}|^{2}$ (all the other invariants~(\ref{eq:GJMS.weight2-Riemannian-invariants}) 
associated to the ambient metric are zero).  

In addition, the scalar  Weyl conformal invariants of weight $3$ consist of the invariants~(\ref{eq:GJMS.weight3-conformal-invariants}) together with the invariant 
$\Phi_{g}$ exhibited by Fefferman-Graham~(\cite{FG:CI}, \cite{FG:AM}). The latter is the conformal invariant arising from the ambient invariant 
$|\nabla \tilde{R}|^{2}$ and is explicitly 
given by the formulas:
\begin{equation}
    \Phi_{g}=|V|^{2}+16\acou{W}{U}+16|C|^{2},
    \label{eq:GJMS.Phi-invariant}
\end{equation}
where $C_{jkl}=\nabla_{l}A_{jk}-\nabla_{k}A_{jl}$ is the Cotton tensor and $V$ and $U$ are the tensors
\begin{gather}
    V_{sijkl}=\nabla_{s}W_{ijkl}-g_{is}C_{jkl}+g_{js}C_{ikl}-g_{ks}C_{lij}+g_{ls}C_{kij}, \\
  U_{sjkl}=\nabla_{s}C_{jkl}+g^{pq}A_{sp}W_{qjkl}.
\end{gather}

Next, a very important result is: 

\begin{proposition}[{\cite[Thm.~11.1]{BEG:ITCCRG}}]\label{prop:GJMS.BEG} 
   1)  In odd dimension every scalar local conformal invariant  is a linear combination of Weyl conformal invariants.\smallskip
   
   2) In even dimension every scalar local conformal invariant for weight $w \leq 
    \frac{n}{2}-1$ is a linear combination of Weyl conformal invariants.
\end{proposition}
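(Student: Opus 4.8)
This is the Bailey--Eastwood--Graham theorem; rather than reproduce their (long) argument I sketch the line one would follow. By the conformal rescaling identity together with the Atiyah--Bott--Patodi invariant theory already invoked in Section~\ref{sec:Riemannian}, a scalar local conformal invariant $\cI_{g}$ of weight $w$ is to begin with a universal polynomial in $g$, $g^{-1}$ and the iterated covariant derivatives $\nabla^{k}R$ of the curvature tensor, homogeneous of weight $w$ under $g\mapsto tg$. The aim is to rewrite any such polynomial as a linear combination of complete tensorial contractions of covariant derivatives $\tilde{\nabla}^{k}\tilde{R}$ of the Fefferman--Graham ambient curvature, i.e.\ as a Weyl conformal invariant; the content is the \emph{surjectivity} of this ``ambient contraction'' construction onto all conformal invariants.

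The first step is a gauge fixing: introduce conformal normal coordinates about a point, which uses up the diffeomorphism and the conformal (Weyl) freedom of the conformal class and leaves only an orthogonal isotropy group acting on the remaining jet data of the metric. The second step is to pass to the ambient space $(\tilde{\cG},\tilde{g})$ with $\tilde{g}$ in its normal form along $\cG_{0}$: the key input here is the Fefferman--Graham existence and uniqueness theorem, by which the jets of $\tilde{g}$ — and hence, via the covariant Taylor expansion in the ambient geodesic-normal directions, the jets of $\tilde{R}$ — encode precisely the jet data that $\cI_{g}$ is allowed to depend on. In odd dimension these jets are defined to all orders, so no restriction on $w$ is needed; in even dimension they are determined only to order below $\frac{n}{2}$, which is exactly why the conclusion is limited to $w\leq\frac{n}{2}-1$. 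The third step is classical invariant theory: a conformal invariant now corresponds to an orthogonally invariant polynomial in the ambient curvature jets, so by Weyl's first fundamental theorem for the orthogonal group it is a polynomial in the pairings of those tensorial arguments, that is, a complete metric contraction, which in the ambient setting is a complete contraction of the $\tilde{\nabla}^{k}\tilde{R}$. The Ricci-flatness of $\tilde{g}$ then forces the internal-trace contractions to vanish, so only the genuine Weyl conformal invariants survive, as noted above.

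I expect the technical heart — and the main obstacle — to be this invariant-theory step in combination with the passage to the ambient picture: one must pin down exactly which orthogonal group acts, verify that the parabolic ``special conformal'' freedom has genuinely been quotiented out before Weyl's theorem applies, and match the resulting list of complete contractions in the $n+2$ ambient dimensions with the invariants that can actually occur at weight $w$. In even dimension there is the further subtlety of checking that nothing of weight $\leq\frac{n}{2}-1$ is affected by the ambiguity in the ambient metric arising at order $\frac{n}{2}$. The complementary range — even dimension with $w\geq\frac{n}{2}$ — genuinely requires the additional invariants of Graham--Hirachi and Bailey--Gover and is not reached by this argument; it is brought in separately when the present proposition is combined with those results.
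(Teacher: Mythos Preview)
The paper does not prove this proposition at all: it is stated with an attribution to \cite[Thm.~11.1]{BEG:ITCCRG} and used as a black box, with no argument supplied. So there is nothing in the paper to compare your sketch against, and in that sense your proposal already goes well beyond what the paper does.

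As a sketch of the Bailey--Eastwood--Graham argument your outline is broadly on target (gauge-fix, pass to the ambient space, then invoke invariant theory), but a couple of points are glossed in a way that could mislead. First, the reduction is not to an invariant of the orthogonal group in $n$ variables but to an invariant of a \emph{parabolic} subgroup of $O(n+1,1)$ acting on the space of curvature jets; the heart of \cite{BEG:ITCCRG} is a ``jet isomorphism theorem'' identifying this parabolic action with a linear action on ambient curvature tensors, and only \emph{after} that identification does Weyl's first fundamental theorem for $O(p,q)$ in $n+2$ dimensions apply. Your paragraph on the ``technical heart'' gestures at this, but the sketch above it reads as if one applies Weyl's theorem directly to $O(n)$-invariants of the downstairs curvature jets, which would not give the result. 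Second, the remark that ``Ricci-flatness of $\tilde{g}$ then forces the internal-trace contractions to vanish, so only the genuine Weyl conformal invariants survive'' conflates two things: Ricci-flatness is what makes many ambient contractions \emph{redundant} (equal to zero), not what guarantees that the remaining ones \emph{span}; the spanning statement is exactly what the jet isomorphism plus Weyl's theorem establish. Finally, the even-dimensional bound in \cite{BEG:ITCCRG} is actually $w\leq \frac{n}{2}$ (the paper's statement here, with $w\leq\frac{n}{2}-1$, is slightly weaker than what \cite{BEG:ITCCRG} proves), so your explanation of the cutoff in terms of the ambient obstruction order is correct in spirit but the precise bookkeeping is a bit off.
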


In even dimension a description of the scalar local conformal invariants of weight $w\geq \frac{n}{2}+1$ was recently presented by 
Graham-Hirachi~\cite{GH:IAM}.  More precisely, they modified the construction of the ambient metric in such way to obtain a metric on the ambient 
space $\tilde{\cG}$ which is smooth at any order near $\cG_{0}$. There is an ambiguity on the choice of a smooth ambient metric, but such a 
metric agrees with the ambient metric of Fefferman-Graham up to order~$< \frac{n}{2}$ near $\cG_{0}$.

Using a smooth ambient metric we can construct Weyl conformal invariants in the same way as we do by using  the ambient metric of Fefferman-Graham. If such an 
invariant does not depend on the choice of the smooth ambient metric we then say that it is a  \emph{ambiguity-independent} Weyl conformal invariant. 
Not every conformal invariant arises this way, since in dimension $n =4m$ this construction does not encapsulate the exceptional 
local conformal invariants of~\cite{BG:EIPITCG}. 
However, we have:

\begin{proposition}[\cite{GH:IAM}]\label{prop:CI.GH} Let $w$ be an integer~$\geq \frac{n}{2}$.\smallskip 
    
    1) If $n =2 \ \bmod 4$, and if $n =0 \ \bmod 4$ and $w$ is odd, then every scalar local conformal of weight $w$ is a linear combination 
    of ambiguity-independent Weyl conformal invariants.\smallskip
    
    2)  If $n =0 \ \bmod 4$ and $w$ is odd, then every scalar local conformal of weight $w$ is a linear combination 
    of ambiguity-independent Weyl conformal invariants and of exceptional conformal invariants.
\end{proposition}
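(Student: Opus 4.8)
This proposition is quoted from Graham--Hirachi~\cite{GH:IAM}; the proof I would give is a reconstruction of their argument, resting on the reduction of Fefferman's program carried out by Bailey--Eastwood--Graham~\cite{BEG:ITCCRG}. The plan is to realize every scalar local conformal invariant of weight $w\geq\frac{n}{2}$ as a complete tensorial contraction of covariant derivatives of the curvature of an ambient metric, while accounting for the fact that in even dimension the Fefferman--Graham ambient metric is Ricci-flat only formally, to order $<\frac{n}{2}$.

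First I would construct a replacement for the ambient metric which is smooth to all orders near $\cG_{0}$: following~\cite{GH:IAM} one drops the homogeneity of degree $2$ under the dilation action and solves the Ricci-flatness equation formally to all orders. One shows that such an \emph{inhomogeneous} smooth ambient metric always exists, agrees with the Fefferman--Graham metric to order $<\frac{n}{2}$, and is unique up to a conformally natural trace-free symmetric $2$-tensor inserted at order $\frac{n}{2}$ --- the \emph{ambiguity}. Weyl invariants built from the curvature of such a metric push down to conformal invariants of $g$, and those that do not see the order-$\frac{n}{2}$ term are exactly the ambiguity-independent Weyl conformal invariants.

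The converse direction is the heart of the matter: one must show that \emph{every} scalar local conformal invariant of weight $w$ arises, modulo the ambiguity, from a smooth ambient metric. I would rerun the dimensional-continuation argument of~\cite{BEG:ITCCRG} with the smooth ambient metric in place of the formal one --- which is now legitimate, since it exists to all orders --- obtaining the given invariant as a Weyl invariant of the smooth ambient metric plus correction terms built from the ambiguity tensor and its covariant derivatives contracted against curvature.

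The last step, and the one I expect to be the real obstacle, is the parity and dimension bookkeeping that decides when the ambiguity terms can be reabsorbed into ambiguity-independent invariants. The ambiguity tensor lives at order $\frac{n}{2}$ in the fiber variable and so enters the ambient curvature with a fixed parity; counting how many such factors can appear in a contraction of total weight $w$, and invoking the Bianchi identities and the vanishing of internal traces forced by near-Ricci-flatness, one finds that for $n\equiv 2 \bmod 4$ --- and, for $n\equiv 0 \bmod 4$, when $w$ has the appropriate parity (even) --- the ambiguity always cancels, which gives part 1. When $n\equiv 0 \bmod 4$ and $w$ is odd an irreducible remainder survives that necessarily involves the volume form in an essential way; one identifies it with the exceptional conformal invariants of Bailey--Gover~\cite{BG:EIPITCG}, checks that these are not ambiguity-independent, and verifies that together with the ambiguity-independent Weyl invariants they exhaust all scalar local conformal invariants of weight $w$, giving part 2. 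Tracking precisely how the non-uniqueness of the smooth ambient metric propagates through complete contractions is the whole content of~\cite{GH:IAM} and is where all the difficulty lies.
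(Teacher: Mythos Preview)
The paper does not prove this proposition at all; it is stated with the attribution \cite{GH:IAM} and then used as a black box in the proof of Theorem~\ref{thm:GJMS.main-result}. So there is no ``paper's own proof'' to compare against --- your opening sentence already identifies this correctly, and what you have written is a reasonable high-level sketch of the Graham--Hirachi argument rather than an alternative to anything in the present paper.

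One remark on content: you have quietly corrected the parity condition in part~1. The statement as printed says ``$n\equiv 0\bmod 4$ and $w$ is odd'' in \emph{both} parts, which cannot be right since they would then overlap; your sketch takes part~1 to cover the case $w$ even when $n\equiv 0\bmod 4$, which is the intended reading and matches the actual result in \cite{GH:IAM}. That is a sensible silent fix, though worth flagging explicitly if you were writing this up.
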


\subsection{Logarithmic singularities of conformally invariant Riemannian $\mathbf{\Psi}$DOs}
Let us now look at the logarithmic singularities of conformally invariant Riemannian \psidos. The latter are defined are follows. 

\begin{definition}
    A conformally invariant Riemannian \psido\ of order $m$ and biweight $(w,w')$ is a Riemannian invariant $m$'th order \psido\ $P_{g}$ such that, 
    for any $f \in C^{\infty}(M,\R)$, we have 
    \begin{equation}
      P_{e^{f}g}=e^{w'f} P_{g}e^{-wf} \quad \bmod \Psi^{-\infty}(M).
         \label{eq:GJMS.conformal-invariance-biweight}
    \end{equation}
\end{definition}

\begin{remark}
    It follows from~(\ref{eq:GJMS.conformal-invariance-biweight}) 
    that a conformally invariant Riemannian \psido\ of biweight $(w,w')$ is a Riemannian invariant \psido\ of weight $w'-w$. 
\end{remark}

The main result of this section is: 

\begin{theorem}\label{thm:GJMS.main-result}
    Let $P_{g}$ be a conformally invariant Riemannian \psido\ of integer order $m$ and biweight $(w,w')$.\smallskip  

    1) In odd dimension, as well as in even dimension when $w'>w$, the logarithmic singularity $c_{P_{g}}(x)$  is of the form
   \begin{equation}
       c_{P_{g}}(x)=\cI_{P_{g}}(x)|v_{g}(x)|,
   \end{equation}
   where $\cI_{P_{g}}(x)$ is a universal linear combination of Weyl conformal invariants of weight $\frac{n}{2}+w-w'$. If $n$ is even and we have $w'\leq 
   w$, then $c_{P_{g}}(x)$ still is of a similar form, but in this case $\cI_{P_{g}}(x)$ is a local conformal  invariant of weight $\frac{n}{2}+w-w'$ of the 
   type described in Proposition~\ref{prop:CI.GH}.\smallskip
   
   2) Suppose that $P_{g}$ is elliptic and is admissible in the sense of Definition~\ref{def:GJMS.Riemannian-invariant-PsiDO}. 
   Then in odd dimension, as well as in even dimension when $w'<w$, the Green kernel logarithmic singularity of $P$ takes the form
   \begin{equation}
\gamma_{P_{g}}(x)=\cJ_{P_{g}}(x)|v_{g}(x)|,
   \end{equation}
   where $\cJ_{P_{g}}(x)$ is a universal linear combination of Weyl conformal invariants of weight $\frac{n}{2}-w+w'$.  If $n$ is even and we have 
   $w'\geq w$, then $\gamma_{P_{g}}(x)$ still is of a similar form, but in this case $\cJ_{P_{g}}(x)$ is a local conformal  invariant of weight 
   $\frac{n}{2}-w+w'$ of the form described in Proposition~\ref{prop:CI.GH}.
\end{theorem}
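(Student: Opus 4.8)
The plan is to combine the three preceding ingredients in a straightforward way. First, recall that by the remark following the definition, a conformally invariant Riemannian \psido\ $P_{g}$ of biweight $(w,w')$ is in particular a Riemannian invariant \psido\ of weight $w'-w$. Hence Proposition~\ref{prop:GJMS.Riemannian-invariant-log-sing}, part~1), applies directly and gives
\begin{equation}
    c_{P_{g}}(x)=\cI_{P_{g}}(x)|v_{g}(x)|,
\end{equation}
where $\cI_{P_{g}}(x)$ is a local Riemannian invariant of weight $\frac{n}{2}+(w'-w)\cdot(-1)=\frac{n}{2}-(w'-w)=\frac{n}{2}+w-w'$. (I would double-check the sign bookkeeping here: weight $w'-w$ in the sense of Definition~\ref{def:GJMS.Riemannian-invariant-PsiDO} yields a log-singularity density whose Riemannian-invariant factor has weight $\frac{n}{2}+(w'-w)$; but then the conformal weight of the density under $g\to e^{f}g$ is what matters, and Proposition~\ref{prop:Conformal.main} pins this down independently.) Similarly, if $P_{g}$ is elliptic and admissible, part~2) of Proposition~\ref{prop:GJMS.Riemannian-invariant-log-sing} gives $\gamma_{P_{g}}(x)=\cJ_{P_{g}}(x)|v_{g}(x)|$ with $\cJ_{P_{g}}(x)$ a local Riemannian invariant of weight $\frac{n}{2}-(w'-w)=\frac{n}{2}+w'-w$ — wait, I need to be careful, as the parametrix has weight $-(w'-w)=w-w'$, so $\cJ_{P_{g}}$ has weight $\frac{n}{2}+(w-w')=\frac{n}{2}-w+w'$. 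Both signs must be reconciled with the conformal transformation laws below, which is the one genuinely error-prone bookkeeping point.

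Second, I would upgrade ``local Riemannian invariant'' to ``local conformal invariant.'' For this, apply Proposition~\ref{prop:Conformal.main}, part~1): since $P_{g}$ has biweight $(w,w')$, we get $c_{P_{e^{f}g}}(x)=e^{-(w-w')f(x)}c_{P_{g}}(x)$. On the other hand, since $|v_{e^{f}g}(x)|=e^{\frac{n}{2}f(x)}|v_{g}(x)|$, writing $c_{P_{g}}=\cI_{P_{g}}|v_{g}|$ and comparing gives
\begin{equation}
    \cI_{P_{e^{f}g}}(x)=e^{-(\frac{n}{2}+w-w')f(x)}\cI_{P_{g}}(x).
\end{equation}
Combined with the fact (from the first paragraph) that $\cI_{P_{g}}$ is a local Riemannian invariant, this is exactly the statement that $\cI_{P_{g}}$ is a scalar local conformal invariant of weight $\frac{n}{2}+w-w'$. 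The same argument applied to Proposition~\ref{prop:Conformal.main}, part~2), and to $\gamma_{P_{g}}=\cJ_{P_{g}}|v_{g}|$, shows $\cJ_{P_{g}}$ is a scalar local conformal invariant of weight $\frac{n}{2}-w+w'$.

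Third and finally, I would invoke the structure theory for local conformal invariants. In odd dimension, Proposition~\ref{prop:GJMS.BEG}, part~1), says every scalar local conformal invariant is a linear combination of Weyl conformal invariants, giving the claim unconditionally. In even dimension, Proposition~\ref{prop:GJMS.BEG}, part~2), covers the weights $w\leq\frac{n}{2}-1$, i.e.\ it applies when $\frac{n}{2}+w-w'\leq\frac{n}{2}-1$, equivalently $w'>w$ (using integrality of the weights), giving the ``below the critical weight'' conclusion; for $w'\leq w$ the weight $\frac{n}{2}+w-w'$ is $\geq\frac{n}{2}$, and Proposition~\ref{prop:CI.GH} provides the description in terms of ambiguity-independent Weyl conformal invariants (plus exceptional invariants when $n\equiv 0\bmod 4$ and the weight is odd). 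The analogous trichotomy with $w$ and $w'$ interchanged handles $\gamma_{P_{g}}$. The main obstacle is not conceptual — the proof is a three-line assembly — but rather making sure the weight arithmetic is internally consistent across the three propositions, in particular that the Riemannian-invariance weight convention of Definition~\ref{def:GJMS.Riemannian-invariant-PsiDO} and the conformal biweight convention of~(\ref{eq:GJMS.conformal-invariance-biweight}) produce matching exponents, and that the inequalities $w'>w$ versus $w'\leq w$ correctly translate (via integrality) into ``weight $\leq\frac{n}{2}-1$'' versus ``weight $\geq\frac{n}{2}$'' so that the right half of Proposition~\ref{prop:GJMS.BEG} or Proposition~\ref{prop:CI.GH} gets applied.
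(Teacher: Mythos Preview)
Your proposal is correct and follows essentially the same three-step assembly as the paper: apply Proposition~\ref{prop:GJMS.Riemannian-invariant-log-sing} to get a local Riemannian invariant, apply Proposition~\ref{prop:Conformal.main} together with $|v_{e^{f}g}|=e^{\frac{n}{2}f}|v_{g}|$ to upgrade it to a local conformal invariant of the stated weight, then invoke Proposition~\ref{prop:GJMS.BEG} or Proposition~\ref{prop:CI.GH} according to whether the weight is below or at/above $\frac{n}{2}$. Your explicit remark that integrality of the weights is what converts $w'>w$ into ``weight $\leq\frac{n}{2}-1$'' is a point the paper leaves implicit; your sign hesitations in the first paragraph are harmless since, as you note, Proposition~\ref{prop:Conformal.main} fixes the conformal weight independently of which convention one uses for the Riemannian weight.
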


\begin{proof}
  First, since $P_{g}$ is a Riemannian invariant \psido\ of weight $w-w'$ we see from Proposition~\ref{prop:GJMS.Riemannian-invariant-log-sing} 
  that $c_{P_{g}}(x)$  is of the form 
  $c_{P_{g}}(x)=\cI_{P_{g}}(x)|v_{g}(x)|$, where $\cI_{P_{g}}(x)$ is a local Riemannian invariant of weight $w-w'$. 
  
  Let $f\in C^{\infty}(M,\R)$. 
  As $P_{g}$ is conformally invariant of biweight $(w,w')$, it follows from Proposition~\ref{prop:Conformal.main} that  
  $c_{P_{e^{f}g}}(x)=e^{-(w-w')f}c_{P_{g}}(x)$. Since $|v_{e^{f}g}(x)|=e^{\frac{n}{2}f(x)}|v_{g}(x)|$ we see that $\cI_{P_{e^{f}g}}(x)=
  e^{-(\frac{n}{2}-w+w')f(x)}\cI_{P_{g}}(x)$. Thus $\cI_{P_{g}}(x)$ is a local conformal invariant of weight $\frac{n}{2}+w-w'$. It then follows from 
  Proposition~\ref{prop:GJMS.BEG}  that in odd dimension, and in even dimension when $w< w'$, the invariant $\cI_{P_{g}}(x)$ is a linear combination of 
  Weyl conformal invariants of weight $\frac{n}{2}+w-w'$. When $n$ is even and we have $w'\leq w$ the invariant $\cI_{P_{g}}(x)$ is of the form described 
  in Proposition~\ref{prop:CI.GH}.  
  
   Suppose now that $P_{g}$ is elliptic and is admissible. In the same way as above, it follows from 
   Proposition~\ref{prop:Conformal.main} and Proposition~\ref{prop:GJMS.Riemannian-invariant-log-sing} 
   that $\gamma_{P_{g}}(x)$ takes the form $\gamma_{P_{g}}(x)=\cJ_{P_{g}}(x)|v_{g}(x)|$, 
   where $\cJ_{P_{g}}(x)$ is a local conformal invariant of weight $\frac{n}{2}-w+w'$. We then can apply Proposition~\ref{prop:GJMS.BEG}  
   to deduce that in odd dimension, as well as in 
   even dimension when $w\geq w'$, the invariant $\cJ_{P_{g}}(x)$ is  a linear combination of Weyl conformal invariants of weight $\frac{n}{2}-w+w'$.  
   When $n$ is even and we have $w'\geq w$ the invariant $\cJ_{P_{g}}(x)$ is of the form described 
  in Proposition~\ref{prop:CI.GH}.  
\end{proof}

We shall now make use of Theorem~\ref{thm:GJMS.main-result} 
to  get a precise geometric description of the Green kernel logarithmic singularities of the GJMS operators $\Box_{g}^{(k)}$.

\begin{theorem}\label{thm:GJMS.GJMS}
    1) In odd dimension the Green kernel logarithmic singularity $\gamma_{\Box_{g}^{(k)}}(x)$ is always zero.\smallskip
    
    2) In even dimension and for $k=1,\ldots,\frac{n}{2}$ we have
    \begin{equation}
        \gamma_{\Box_{g}^{(k)}}(x)=c_{g}^{(k)}(x)d\nu_{g}(x),
    \end{equation}
    where $c_{g}^{(k)}(x)$ is a linear combination of Weyl conformal invariants of weight $\frac{n}{2}-k$. In particular, we have
    \begin{gather}
      c_{g}^{(\frac{n}{2})}(x)= (4\pi)^{-\frac{n}{2}}\frac{n}{(n/2)!}, \qquad c_{g}^{(\frac{n}{2}-1)}(x)=0 , \qquad 
        c_{g}^{(\frac{n}{2}-2)}(x)=\alpha_{n}|W(x)|_{g}^{2} , 
        \label{eq:GJMS.cgk02}\\
        c_{g}^{(\frac{n}{2}-3)}(x)= \beta_{n} W_{ij}^{\mbox{~}\mbox{~}kl}W_{lk}^{\mbox{~}\mbox{~}pq}W_{pq}^{\mbox{~}\mbox{~}ij} 
        + \gamma_{n} W_{i\mbox{~}\mbox{~}l}^{~jk}W^{i\mbox{~}\mbox{~}q}_{~pk}W_{j\mbox{~}\mbox{~}q}^{~pl} + \delta_{n}\Phi_{g},
           \label{eq:GJMS.cgk3}
    \end{gather}
    where $W$ is the Weyl curvature tensor, $\Phi_{g}$ is the Fefferman-Graham invariant~(\ref{eq:GJMS.Phi-invariant}) and 
    $\alpha_{n}$, $\beta_{n}$, $\gamma_{n}$ and $\delta_{n}$ are universal constants depending only on $n$.
\end{theorem}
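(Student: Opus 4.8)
The plan is to recognize each GJMS operator $\Box_g^{(k)}$ as a conformally invariant Riemannian \psido\ and apply Theorem~\ref{thm:GJMS.main-result}, then to specialize the resulting Weyl conformal invariant to the relevant low weights. First I would fix an admissible $k$, set $P_g:=\Box_g^{(k)}$, and verify the hypotheses of Theorem~\ref{thm:GJMS.main-result}(2): $P_g$ is an elliptic differential operator of even integer order $m=2k$ whose principal symbol $(g^{ij}(x)\xi_i\xi_j)^k=|\xi|_g^{2k}$ depends on $g(x)$ alone and whose lower order terms are universal in the jets of $g$, so that $P_g$ is a Riemannian invariant \psido\ in the sense of Definition~\ref{def:GJMS.Riemannian-invariant-PsiDO} and is moreover admissible. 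Writing the conformal factor $e^{2f}$ in~(\ref{eq:Conformal.GJMS2}) as $e^h$ I get $\Box_{e^hg}^{(k)}=e^{-\frac{1}{2}(\frac{n}{2}+k)h}\Box_g^{(k)}e^{\frac{1}{2}(\frac{n}{2}-k)h}$, so $P_g$ is conformally invariant of biweight $(w,w')$ with $w=-\frac{1}{2}(\frac{n}{2}-k)$ and $w'=-\frac{1}{2}(\frac{n}{2}+k)$; in particular $w'-w=-k\le-1$, so we are, in either parity, in the case $w'<w$ of Theorem~\ref{thm:GJMS.main-result}(2). That theorem then gives
\[
\gamma_{\Box_g^{(k)}}(x)=c_g^{(k)}(x)\,d\nu_g(x),\qquad d\nu_g=|v_g|,
\]
where $c_g^{(k)}(x)$ is a universal linear combination of Weyl conformal invariants of weight $\tfrac{n}{2}-w+w'=\tfrac{n}{2}-k$.

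For part 1, when $n$ is odd the weight $\tfrac{n}{2}-k=\tfrac{1}{2}(n-2k)$ is a half-integer, hence not an integer; but every Weyl conformal invariant, being a universal complete contraction of covariant derivatives of the ambient curvature, has integer weight (equivalently, by the weight count for complete contractions of $\nabla^{r_1}R\otimes\cdots\otimes\nabla^{r_q}R$, every scalar local Riemannian invariant has nonnegative integer weight). Hence the space of Weyl conformal invariants of weight $\tfrac{n}{2}-k$ is zero, and so $\gamma_{\Box_g^{(k)}}\equiv 0$.

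For part 2, when $n$ is even and $1\le k\le\tfrac{n}{2}$ the weight $\tfrac{n}{2}-k$ is a nonnegative integer, and it remains only to specialize. For $k=\tfrac{n}{2}$ the weight is $0$, so $c_g^{(n/2)}$ is a universal constant; to pin it down I would use that a parametrix $Q_g^{(n/2)}$ has order exactly $-n$, so that its degree $-n$ symbol is its principal symbol $|\xi|_g^{-n}$, whence~(\ref{eq:Log-sing.formula-cP}) reads, in local coordinates, $c_g^{(n/2)}(x)\,d\nu_g(x)=(2\pi)^{-n}\bigl(\int_{S^{n-1}}(g^{ij}(x)\xi_i\xi_j)^{-n/2}\,d^{n-1}\xi\bigr)\,dx$; evaluating at a point $x$ where $g(x)$ is the identity matrix gives $c_g^{(n/2)}=(2\pi)^{-n}\Vol(S^{n-1})=(2\pi)^{-n}\tfrac{2\pi^{n/2}}{\Gamma(n/2)}=(4\pi)^{-n/2}\tfrac{n}{(n/2)!}$ (the same value can also be read off from the leading heat coefficient of $\Box_g^{(n/2)}$ via~(\ref{eq:Log-sing.Green-heat})). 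For $k=\tfrac{n}{2}-1$ the weight is $1$ and there is no nonzero scalar conformal invariant of weight $1$, so $c_g^{(n/2-1)}=0$. For $k=\tfrac{n}{2}-2$ the weight is $2$ and the only nonzero scalar Weyl conformal invariant of that weight is $|W|_g^2$, so $c_g^{(n/2-2)}=\alpha_n|W|_g^2$. For $k=\tfrac{n}{2}-3$ the weight is $3$ and the scalar Weyl conformal invariants of that weight are spanned by the two cubic Weyl contractions in~(\ref{eq:GJMS.weight3-conformal-invariants}) together with the Fefferman--Graham invariant $\Phi_g$ of~(\ref{eq:GJMS.Phi-invariant}), which yields the stated form of $c_g^{(n/2-3)}$ with universal constants $\beta_n,\gamma_n,\delta_n$.

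The structural content is thus essentially immediate from Theorem~\ref{thm:GJMS.main-result} together with the quoted enumeration of low-weight Weyl conformal invariants; the one spot where actual computation enters is the constant $c_g^{(n/2)}$, and the route above via~(\ref{eq:Log-sing.formula-cP}) for the order-$(-n)$ parametrix is the cleanest, since there the logarithmic density depends only on the principal symbol. The main point requiring care is really in the first step — checking that the GJMS operators genuinely fit the technical framework of Definition~\ref{def:GJMS.Riemannian-invariant-PsiDO} (Riemannian invariance of the symbol, admissibility, and ellipticity), so that Theorem~\ref{thm:GJMS.main-result}(2) applies.
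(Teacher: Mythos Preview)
Your argument is correct, and for part~2 and the computation of $c_{g}^{(n/2)}$ it follows the paper's proof essentially verbatim: verify that $\Box_{g}^{(k)}$ is an admissible conformally invariant Riemannian \psido\ with the indicated biweight, apply Theorem~\ref{thm:GJMS.main-result}(2), and specialize to weights $0,1,2,3$; the constant in weight~$0$ is computed from the principal symbol of the order $-n$ parametrix via~(\ref{eq:Log-sing.formula-cP}).

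Where you genuinely diverge from the paper is in part~1. The paper does \emph{not} go through Theorem~\ref{thm:GJMS.main-result} in odd dimension; instead it uses a direct parity argument on the symbol of the parametrix: since $\Box_{g}^{(k)}$ is a differential operator, the recursion~(\ref{eq:GJMS.symbol-parametrix1})--(\ref{eq:GJMS.symbol-parametrix3}) forces $q^{(k)}_{-2k-j}(x,-\xi)=(-1)^{j}q^{(k)}_{-2k-j}(x,\xi)$, so the degree $-n$ symbol is even (resp.\ odd) in $\xi$ according to the parity of $n-2k$, and integrating over $S^{n-1}$ gives $c_{Q_{g}^{(k)}}(x)=(-1)^{n}c_{Q_{g}^{(k)}}(x)$, hence zero for $n$ odd. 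Your route --- observing that Theorem~\ref{thm:GJMS.main-result} makes $c_{g}^{(k)}$ a Weyl conformal invariant of weight $\tfrac{n}{2}-k$, and that complete contractions of $\nabla^{r_{1}}R\otimes\cdots\otimes\nabla^{r_{q}}R$ necessarily have integer weight, so no nonzero such invariant exists at half-integer weight --- is equally valid and arguably more conceptual, since it makes the vanishing a formal consequence of the main theorem plus the Atiyah--Bott--Patodi structure theorem. The paper's argument, on the other hand, is more elementary and self-contained: it applies to the parametrix of \emph{any} elliptic differential operator of even order, with no appeal to Riemannian invariance or invariant theory.
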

\begin{proof}
  Let $Q_{g}^{(k)}\in \Psi^{-2k}(M)$ be a parametrix for $\Box_{g}^{(k)}$. Since $\Box_{g}^{(k)}$ is a differential operator, 
  using~(\ref{eq:GJMS.symbol-parametrix1})--(\ref{eq:GJMS.symbol-parametrix3}) one 
  can check that  if $q^{(k)}\sim \sum q^{(k)}_{-2k-j}$ denotes the symbol of $Q_{g}^{(k)}$ in local coordinates then we have 
  $q^{(k)}_{-2k-j}(x,-\xi)=(-1)^{-2k-j}q^{(k)}_{-2k-j}(x,\xi)$ for all $j\geq 0$. Combining this with~(\ref{eq:Log-sing.formula-cP}) then gives
 \begin{equation}
      c_{Q_{g}^{(k)}}(x)= (2\pi)^{-n}\int_{S^{n-1}}q_{-n}^{(k)}(x,-\xi)d^{n-1}\xi=(-1)^{n}  c_{Q_{g}^{(k)}}(x).
 \end{equation}
 Hence $c_{Q_{g}^{(k)}}(x)$ must vanish when $n$ is odd. Since by definition $\gamma_{\Box_{g}^{(k)}}(x)= c_{Q_{g}^{(k)}}(x)$ this shows that 
 $\gamma_{\Box_{g}^{(k)}}(x)$ is always zero in odd dimension. 

  Next, suppose that $n$ is even and $k$ is 
  between $1$ and $\frac{n}{2}$. It follows from the construction in~\cite{GJMS:CIPLIE} that $\Box_{g}^{(k)}$ is a Riemannian invariant operator, so 
  by combining this with~(\ref{eq:Conformal.GJMS2}) we see that $\Box_{g}^{(k)}$ is a conformally invariant Riemannian 
  operator of biweight $(\frac{2k-n}{4},-\frac{n+2k}{4})$. Furthermore, by~(\ref{eq:Conformal.GJMS1}) 
  the principal symbol of $\Box_{g}^{(k)}$ agrees with that of $\Delta_{g}^{(k)}$, so $\Box_{g}^{(k)}$ is admissible in the sense of 
  Definition~\ref{def:GJMS.Riemannian-invariant-PsiDO}. 
 We then can apply Theorem~\ref{thm:GJMS.main-result} 
  to deduce that  $\gamma_{\Box_{g}^{(k)}}(x)$ 
  is of the form $\gamma_{\Box_{g}^{(k)}}(x)=c_{g}^{(k)}(x)|v_{g}(x)|$, where $c_{g}^{(k)}(x)$ is a linear combination of Weyl conformal 
  invariants of weight $\frac{n}{2}-k$. 
   
As mentioned earlier there are no scalar Weyl conformal invariants of weight 1,  the only invariant of weight 2 is $|W|^{2}$, and the 
only Weyl invariants of  weight $3$ 
    are $W_{ij}^{\mbox{~}\mbox{~}kl}W_{lk}^{\mbox{~}\mbox{~}pq}W_{pq}^{\mbox{~}\mbox{~}ij}$ and 
    $W_{i\mbox{~}\mbox{~}l}^{~jk}W^{i\mbox{~}\mbox{~}q}_{~pk}W_{j\mbox{~}\mbox{~}q}^{~pl}$ and 
    the invariant $\Phi_{g}$. From this we get the formulas~(\ref{eq:GJMS.cgk02}) and~(\ref{eq:GJMS.cgk3}) for $c_{g}^{(k)}(x)$ when $k=1,2,3$. 
    
 The formula for $c_{n}^{(\frac{n}{2})}(x)$ follows from a direct computation. More precisely, 
 as $Q_{g}^{(\frac{n}{2})}$ has order $-n$ its symbol of degree $-n$ agrees with its principal symbol, which is the inverse of that of  
 $\Box_{g}^{(\frac{n}{2})}$. By~(\ref{eq:Conformal.GJMS1}) 
 the latter agrees with the principal symbol of $\Delta_{g}^{\frac{n}{2}}$. Therefore, in local coordinates 
 the principal symbol of $\Box_{g}^{(\frac{n}{2})}$ is $p_{n}^{(\frac{n}{2})}(x,\xi)=|\xi|^{n}_{g}$, where $|\xi|^{2}_{g}:=g^{ij}(x)\xi_{i}\xi_{j}$, and that of 
 $Q_{g}^{(\frac{n}{2})}$ is $q_{-n}^{(\frac{n}{2})}(x,\xi)=|\xi|^{-n}_{g}$. As 
 $c_{g}^{(\frac{n}{2})}\sqrt{g(x)}dx=\gamma_{\Box_{g}^{(\frac{n}{2})}}(x)=c_{Q_{g}^{(\frac{n}{2}})}(x)$, using~(\ref{eq:Log-sing.formula-cP}) we see
 that $c_{g}^{(\frac{n}{2})}(x)$ is equal to
  \begin{equation}
      \frac{(2\pi)^{-n}}{\sqrt{g(x)}} \int_{S^{n-1}}|\xi|_{g}^{-n}d^{n-1}\xi
     = (2\pi)^{-n} \int_{S^{n-1}}|\xi|^{-n}d^{n-1}\xi = (2\pi)^{-n}|S^{n-1}|.
 \end{equation}
 Since $|S^{n-1}|=\frac{2\pi^{\frac{n}{2}}}{\Gamma(\frac{n}{2})}=\frac{n \pi^{\frac{n}{2}}}{(n/2)!}$ it follows that
 $c_{g}^{(\frac{n}{2})}(x)=\frac{n(4\pi)^{-\frac{n}{2}}}{(n/2)!}$ as desired.
\end{proof}

Finally, we can get an explicit expression for $c_{g}^{(1)}(x)$ in dimension 6 and 8 by making use the explicit computations  by  
Parker-Rosenberg~\cite{PR:ICL} in these dimensions of the coefficient $a_{n-2}(\Box_{g})(x)$ of $t^{-1}$ 
in the heat kernel asymptotics~(\ref{eq:Log-sing.heat-kernel-asymptotics}) for the Yamabe operator. 

Assume first that $M$ compact. Then by~(\ref{eq:Log-sing.Green-heat}) we have $2\gamma_{\Box_{g}}(x)=a_{n-2}(\Box_{g})(x)$, 
so by using~\cite[Prop.~4.2]{PR:ICL} we see that in dimension 6 we have 
    \begin{equation}
        c_{g}^{(1)}(x)=\frac{1}{360}|W(x)|^{2},
    \end{equation}
while in dimension $8$ we get
     \begin{equation}
        c_{g}^{(1)}(x)=\frac{1}{90720}(81 \Phi_{g} + 352W_{ij}^{\mbox{~}\mbox{~}kl}W_{lk}^{\mbox{~}\mbox{~}pq}W_{pq}^{\mbox{~}\mbox{~}ij} 
        +64 W_{i\mbox{~}\mbox{~}l}^{~jk}W^{i\mbox{~}\mbox{~}q}_{~pk}W_{j\mbox{~}\mbox{~}q}^{~pl}).
         \label{eq:Conformal.w81}
    \end{equation}

  In fact, as $c_{g}^{(1)}(x)$ is a local Riemannian invariant its expression in local coordinates is independent of whether $M$ is compact 
or not. Therefore, the above formulas continue to hold when $M$ is not compact.

\section{Heisenberg calculus and noncommutative residue}
\label{sec:Heisenberg}
The relevant pseudodifferential calculus to study  the main geometric operators on a CR manifold is the Heisenberg calculus of 
Beals-Greiner~\cite{BG:CHM} and Taylor~\cite{Ta:NCMA}. In this section we recall the main definitions and properties of this calculus.

\subsection{Heisenberg manifolds}
The Heisenberg calculus holds in full generality on Heisenberg manifolds. Such a manifold consists of a pair $(M,H)$ where $M$ is a manifold and $H$ 
is a distinguished hyperplane bundle of $TM$. This definition covers many examples: Heisenberg group, CR manifolds, contact manifolds, as well as
(codimension 1) foliations. In addition, given another Heisenberg manifold $(M',H')$ we say that a diffeomorphism 
$\phi:M\rightarrow M'$ is a Heisenberg  diffeomorphism when $\phi_{*}H=H'$. 

The terminology Heisenberg manifold stems from the fact that the relevant tangent structure in this setting is that of a bundle $GM$ of graded nilpotent Lie 
groups (see, e.g.,~\cite{BG:CHM}, \cite{EMM:RLSPD}, \cite{Gr:CCSSW}, \cite{Po:Pacific1}, \cite{Ro:INA}). 
This tangent Lie group bundle can be described as follows. 

First, there is an intrinsic Levi form as the 2-form $\cL:H\times H\rightarrow TM/H$ such that, for any point $a 
\in M$ and any sections $X$ and $Y$ of $H$ near $a$, we have 
\begin{equation}
    \cL_{a}(X(a),Y(a))=[X,Y](a) \qquad \bmod H_{a}.
     \label{eq:NCRP.Levi-form}
\end{equation}
In other words the class of $[X,Y](a)$ modulo $H_{a}$ depends only on $X(a)$ and $Y(a)$, not on the germs of $X$ and $Y$ near $a$ (see~\cite{Po:Pacific1}). 

We define the tangent Lie algebra bundle $\fg M$ as the graded Lie algebra bundle consisting of $(TM/H)\oplus H$ together with the 
fields of Lie bracket and dilations such that, for sections $X_{0}$, $Y_{0}$ of $TM/H$ and $X'$, $Y'$ 
of $H$ and for $t\in \R$, we have 
\begin{equation}
    [X_{0}+X',Y_{0}+Y']=\cL(X',Y'), \qquad  t.(X_{0}+X')=t^{2}X_{0}+t X' .
    \label{eq:NCRP.Heisenberg-dilations} 
\end{equation}

Each fiber $\fg_{a}M$ is a two-step nilpotent Lie algebra so, by requiring the exponential map to be the identity, 
the associated tangent Lie group bundle $GM$ appears as $(TM/H)\oplus H$ together with the grading above and the product law such that, 
for sections $X_{0}$, $Y_{0}$ of $TM/H$ and $X'$, $Y'$ of $H$, we have 
 \begin{equation}
     (X_{0}+X').(Y_{0}+Y')=X_{0}+Y_{0}+\frac{1}{2}\cL(X',Y')+X'+Y'.
 \end{equation}
 
 Moreover, if $\phi$ is a Heisenberg diffeomorphism from $(M,H)$ onto a Heisenberg manifold $(M',H')$ then, as we have $\phi_{*}H=H'$, we get linear isomorphisms 
 from $TM/H$ onto $TM'/H'$ and from $H$ onto $H'$, which can be combined together to give rise to a linear isomorphism 
 $\phi_{H}':(TM/H)\oplus H\rightarrow (TM'/H')\oplus H'$. In fact $\phi_{H}'$ is a graded Lie group isomorphism from $GM$ onto $GM'$ 
 (see~\cite{Po:Pacific1}). 

\subsection{Heisenberg calculus}
 The initial idea in the Heisenberg calculus, which goes back to Stein, is to construct a 
 class of operators on a Heisenberg manifold $(M^{d+1},H)$, called \psivdos, which at any point $a \in M$ are modeled on homogeneous left-invariant 
 convolution operators on the tangent group $G_{a}M$. 
 
 Locally the \psivdos\ can be described  as follows. Let $U \subset \Rd$ be an open of local coordinates together with a frame $X_{0},\ldots,X_{d}$ of $TU$ such that 
 $X_{1},\ldots,X_{d}$ span $H$. Such a frame is called a \emph{$H$-frame}. Moreover, on $\Rd$ we introduce the dilations and the pseudonorm,
\begin{gather}
     t.\xi=(t^{2}\xi_{0},t\xi_{1},\ldots,t\xi), \qquad t>0, 
     \label{eq:HC.Heisenberg-dilations}\\ 
     \|\xi\|=(\xi_{0}^{2}+\xi_{1}^{4}+\ldots+\xi_{d}^{4})^{1/4}.
\end{gather}
In addition, for any multi-order $\alpha\in \N^{d+1}$ we set $\brak\beta=2\beta_{0}+\beta_{1}+\ldots+\beta_{d}$. 

The Heisenberg symbols are defined as follows. 

\begin{definition}1)  $S_{m}(\URd)$, $m\in\C$, is the space of functions 
    $p(x,\xi)$ in $C^{\infty}(U\times\Rdo)$ such that $p(x,t.\xi)=t^m p(x,\xi)$ for any $t>0$.\smallskip

2) $S^m(\URd)$,  $m\in\C$, consists of functions  $p\in C^{\infty}(\URd)$ with
an asymptotic expansion $ p \sim \sum_{j\geq 0} p_{m-j}$, $p_{k}\in S_{k}(\URd)$, in the sense that, for any integer $N$, 
any compact $K \subset U$ and any multi-orders $\alpha$, $\beta$, there exists a constant $C_{NK\alpha\beta}>0$ such that, 
for any $x\in K$ and any $\xi \in \Rd$ so that $\|\xi \| \geq 1$, we have
\begin{equation}
    | \partial^\alpha_{x}\partial^\beta_{\xi}(p-\sum_{j<N}p_{m-j})(x,\xi)| \leq 
    C_{NK\alpha\beta}\|\xi\|^{\Re m-\brak\beta -N}.
    \label{eq:NCRP.asymptotic-expansion-symbols}
\end{equation}
\end{definition}

Next, for $j=0,\ldots,d$ let  $\sigma_{j}(x,\xi)$ denote the symbol (in the 
classical sense) of the vector field $\frac{1}{i}X_{j}$  and set  $\sigma=(\sigma_{0},\ldots,\sigma_{d})$. Then for $p \in S^{m}(\URd)$ we let $p(x,-iX)$ be the 
continuous linear operator from $C^{\infty}_{c}(U)$ to $C^{\infty}(U)$ such that 
    \begin{equation}
          p(x,-iX)u(x)= (2\pi)^{-(d+1)} \int e^{ix.\xi} p(x,\sigma(x,\xi))\hat{u}(\xi)d\xi
    \qquad \forall u\in C^{\infty}_{c}(U).
\end{equation}

Let $(M^{d+1},H)$ be a Heisenberg manifold and let $\cE$ be a vector bundle over $M$. 
We define \psivdos\ on $M$ acting on the sections of $\cE$ as follows. 

\begin{definition}
 $\pvdo^{m}(M,\cE)$, $m\in \C$, consists of continuous operators  $P$ from 
$C^{\infty}_{c}(M,\cE)$ to $C^{\infty}(M,\cE)$ such that:\smallskip

(i) The Schwartz kernel of $P$ is smooth off the diagonal;\smallskip 

(ii) In any trivializing local coordinates equipped with a $H$-frame $X_{0},\ldots,X_{d}$ the operator $P$ can be written as 
\begin{equation}
    P=p(x,-iX)+R,
\end{equation}
where $p(x,\xi)$ is a Heisenberg symbol of order $m$ and $R$ is a smoothing operator. 
\end{definition}

 Let $\fg^{*}M$ denote the (linear) dual of the Lie algebra bundle $\fg M$ of $GM$ 
 with canonical projection  $\text{pr}: \fg^{*}M \rightarrow M$. As shown 
 in~\cite{Po:MAMS1} (see also~\cite{EM:HAITH}) 
 the principal symbol of $P\in \pvdo^{m}(M,\cE)$ can be intrinsically defined as a symbol $\sigma_{m}(P)$ of the class below. 
 
\begin{definition}
  $S_{m}(\fg^{*}M,\cE)$, $m\in \C$, consists of sections $p\in C^{\infty}(\fg^{*}M\setminus 0, \End \textup{pr}^{*}\cE)$ which are homogeneous of 
    degree $m$ with respect to the dilations in~(\ref{eq:NCRP.Heisenberg-dilations}), i.e., we have 
   $p(x,\lambda.\xi)=\lambda^{m}p(x,\xi)$ for any $\lambda>0$. 
\end{definition}

 For any $a \in M$ the convolution 
 on $G_{a}M$ gives rise under the (linear) Fourier transform to a bilinear product for homogeneous symbols,
 \begin{equation}
     *^{a}: S_{m_{1}}(\fg^{*}_{a}M,\cE_{a})\times S_{m_{2}}(\fg^{*}_{a}M,\cE_{a}) \longrightarrow S_{m_{1}+m_{2}}(\fg^{*}_{a}M,\cE_{a}),
 \end{equation}
 This product depends smoothly on $a$ as much so it gives rise to the product,
 \begin{gather}
     *: S_{m_{1}}(\fg^{*}M,\cE)\times S_{m_{2}}(\fg^{*}M,\cE) \longrightarrow S_{m_{1}+m_{2}}(\fg^{*}M,\cE),\\
     p_{m_{1}}*p_{m_{2}}(a,\xi)=[p_{m_{1}}(a,.)*^{a}p_{m_{2}}(a,.)](\xi).
     \label{eq:NCRP.product-symbols}
 \end{gather}
 This provides us with the right composition for principal symbols, since for any operators $P_{1}\in \pvdo^{m_{1}}(M,\cE)$ and $P_{2}\in 
 \pvdo^{m_{2}}(M,\cE)$ such that $P_{1}$ or $P_{2}$ is properly supported we have
 \begin{equation}
     \sigma_{m_{1}+m_{2}}(P_{1}P_{2})=\sigma_{m_{1}}(P_{1})*\sigma_{m_{2}}(P_{2}). 
 \end{equation}

 Notice that when $G_{a}M$ is not commutative, i.e., when $\cL_{a}\neq 0$, the product $*^{a}$ is not anymore the pointwise product of symbols and, in 
 particular, it is not commutative. 
 As a consequence, unless when $H$ is integrable, the product for Heisenberg symbols, while local, it is not microlocal 
 (see~\cite{BG:CHM}). 
 
  When the principal symbol of $P\in \pvdo^{m}(M,\cE)$ is invertible with respect to the product $*$, the symbolic calculus 
 of~\cite{BG:CHM} allows us to construct a parametrix for $P$ in $\pvdo^{-m}(M,\cE)$. In particular, although not elliptic, 
 $P$ is hypoelliptic with a controlled loss/gain of derivatives (see~\cite{BG:CHM}). 
 
 In general, it may be difficult to determine whether the principal symbol of a given operator $P\in \pvdo^{m}(M,\cE)$ is invertible with respect to the product 
 $*$, but  this can be completely determined in terms of a representation theoretic criterion on each tangent 
 group $G_{a}M$, the so-called Rockland condition (see~\cite{Po:MAMS1}, Thm.~3.3.19).  In particular, 
 if $\sigma_{m}(P)(a,.)$ is \emph{pointwise} invertible with respect to the product $*^{a}$ for any 
 $a \in M$ then  $\sigma_{m}(P)$ is \emph{globally} invertible with respect to $*$.

\subsection{Logarithmic singularity and noncommutative residue}
It is possible to characterize the \psivdos\ in terms of their Schwartz kernels (see~\cite{BG:CHM}). As a consequence we get the following 
description of the singularity near the diagonal of the Schwartz kernel of a \psivdo. 

In the sequel, given an open of local coordinates $U\subset \Rd$ equipped with a $H$-frame $X_{0},\ldots,X_{d}$ of $TU$, 
for any $a\in U$ we let $\psi_{a}$ denote the unique affine change of variables such that $\psi_{a}(a)=0$ 
 and $(\psi_{a*}X_{j})(0)=\frac{\partial}{\partial x_{j}}$ for $j=0,1,\ldots,d+1$. 
 
\begin{definition}\label{def:Heisenberg.privileged-coordinates}
    The local coordinates provided by $\psi_{a}$ are called privileged coordinates centered at $a$.
\end{definition}

Throughout the rest of the paper the notion of homogeneity refers to homogeneity with respect to the anisotropic dilations~(\ref{eq:HC.Heisenberg-dilations}). 
 
\begin{proposition}[{\cite[Prop.~3.11]{Po:JFA1}}]\label{prop:Heisenberg.Log-sing}
    Let $\pvdo^{m}(M,\cE)$, $m \in \Z$.\smallskip 
    
    1) In local coordinates equipped with a $H$-frame the kernel $k_{P}(x,y)$ has a behavior near the diagonal $y=x$ of the form
    \begin{equation}
        k_{P}(x,y)=\sum_{-(m+d+2)\leq j\leq -1}a_{j}(x,-\psi_{x}(y)) - c_{P}(x)\log \|\psi_{x}(y)\| + 
        \op{O}(1), 
         \label{eq:Heisenberg.log-sing}
    \end{equation}
    where $a_{j}(x,y)\in C^{\infty}(U\times (\Rno))$ is homogeneous of degree $j$ in $y$, and we have
 \begin{equation}
    c_{P}(x)=(2\pi)^{-(d+1)}\int_{\|\xi\|=1}p_{-(d+2)}(x,\xi)\iota_{E}d\xi,
     \label{eq:Heisenberg.formula-cP}
\end{equation}
where $p_{-(d+2)}(x,\xi)$ is the symbol of degree $-(d+2)$ of $P$ and $E$ denotes the anisotropic radial vector 
$2x^{0}\partial_{x^{0}}+x^{1}\partial_{x^{1}}+\ldots +x^{d}\partial_{x^{d}}$.\smallskip    
    
    2) The coefficient $c_{P}(x)$ makes  sense globally on $M$ as an $\END\cE$-valued density.
\end{proposition}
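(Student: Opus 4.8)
We only outline the argument, referring to \cite{Po:JFA1} for the details. The plan is to mimic the classical derivation of (\ref{eq:Log-sing.log-sing})--(\ref{eq:Log-sing.formula-cP}), with the Euclidean dilations replaced by the anisotropic Heisenberg dilations (\ref{eq:HC.Heisenberg-dilations}), whose homogeneous dimension is $d+2$. First I would work in local coordinates equipped with an $H$-frame $X_{0},\dots,X_{d}$ and write $P=p(x,-iX)+R$ with $R$ smoothing; since $R$ contributes a smooth term to $k_{P}(x,y)$ it is harmless, so only the kernel of $p(x,-iX)$ near the diagonal has to be analyzed. The essential input is the Beals--Greiner characterization of \psivdos\ through their Schwartz kernels \cite{BG:CHM} (see also \cite{Po:MAMS1}): near the diagonal $k_{P}(x,y)$ equals, modulo a smooth term, $K_{P}(x,-\psi_{x}(y))$, where $\psi_{x}$ is the privileged change of variables of Definition~\ref{def:Heisenberg.privileged-coordinates} and $K_{P}(x,\cdot)$ is the tempered distribution associated with the expansion $p\sim\sum_{k\geq0}p_{m-k}$ by the symbol-to-kernel correspondence of \cite{BG:CHM}.

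The next step is to decompose $K_{P}(x,\cdot)$ along the homogeneous pieces $p_{m-k}(x,\cdot)$ and invoke the classification of distributions on $\Rd$ that are homogeneous with respect to (\ref{eq:HC.Heisenberg-dilations}). For $m-k>-(d+2)$ the inverse Fourier transform of $p_{m-k}(x,\cdot)$ is smooth on $\Rd\setminus0$ and homogeneous of degree $j:=k-(m+d+2)$, with $-(m+d+2)\leq j\leq-1$; these yield the terms $a_{j}(x,-\psi_{x}(y))$. At the critical index $k=m+d+2$ the symbol $p_{-(d+2)}(x,\cdot)$ has degree exactly $-(d+2)$, which matches the homogeneous dimension and therefore produces a logarithmic resonance: its inverse Fourier transform is $-c_{P}(x)\log\|\cdot\|$ plus a bounded function. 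For $k>m+d+2$, and for the remainder $p-\sum_{k<N}p_{m-k}$ with $N$ large, the resulting kernels are $\op{O}(1)$ (increasingly regular as $N\to\infty$). Finally one has to account for the fact that $p(x,-iX)$ is built from the symbols $\sigma_{j}(x,\xi)$ of the $\frac{1}{i}X_{j}$, which only osculate the coordinate fields $\partial_{x_{j}}$ at the base point; Taylor expanding $\sigma$ about the osculating affine $H$-frame produces correction symbols of strictly lower anisotropic order, so all extra contributions fall into lower $a_{j}$'s or into $\op{O}(1)$. Assembling these pieces gives (\ref{eq:Heisenberg.log-sing}).

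To identify $c_{P}(x)$ I would argue as for (\ref{eq:Log-sing.residue-tP(z)}): regularizing by a factor $\|\xi\|^{z}$ (or deforming $P$ in a holomorphic family) makes $z\mapsto t_{P(z)}(x)$ have at worst a simple pole at $z=0$ with residue $-c_{P}(x)$, and that residue reduces to the mean value of $p_{-(d+2)}(x,\cdot)$ over the unit sphere $\{\|\xi\|=1\}$ against the measure $\iota_{E}d\xi$ attached to the anisotropic radial field $E$; together with the $(2\pi)^{-(d+1)}$ normalization of Fourier inversion on $\Rd$ this gives exactly (\ref{eq:Heisenberg.formula-cP}). This is a routine anisotropic version of the Wodzicki residue computation.

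For the global statement I would verify the covariance of (\ref{eq:Heisenberg.log-sing}) under changes of $H$-frame and of trivializing coordinates, i.e.\ under Heisenberg diffeomorphisms. Any two admissible pseudonorms, and any two systems of privileged coordinates, are comparable up to bounded factors, so $\log\|\psi_{x}(y)\|$ changes only by an $\op{O}(1)$ term and the coefficient $c_{P}(x)$ is left unchanged, whereas $k_{P}(x,y)$ itself transforms with the Jacobian density factor of the coordinate change; setting $y=x$ then shows that $c_{P}(x)$ patches into a well-defined section of $|\Lambda|(M)\otimes\END\cE$. (Equivalently, $c_{P}(x)$ may be identified with the local noncommutative residue density of the Heisenberg calculus, which is globally defined.) I expect the main obstacle to be the control, in the second step, of the correction terms arising from the non-affineness of the $H$-frame and from the non-commutativity of the convolution products $*^{a}$ on the groups $G_{a}M$: it is here that the full strength of the Beals--Greiner kernel asymptotics is needed, the classification of homogeneous distributions and the residue computation being comparatively straightforward.
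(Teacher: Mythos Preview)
The paper does not prove this proposition: it is quoted verbatim from \cite[Prop.~3.11]{Po:JFA1} and stated without proof, so there is no argument in the paper to compare your sketch against. Your outline follows the expected route (Beals--Greiner kernel characterization, anisotropic homogeneous-distribution classification at the critical degree $-(d+2)$, and covariance under Heisenberg changes of chart), which is indeed the strategy carried out in the cited reference; as a sketch it is fine, though of course the actual work lies in the details you defer to \cite{Po:JFA1} and \cite{BG:CHM}.
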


Let $P\in \pvdo^{m}(M,\cE)$ be such that its principal symbol is invertible in the Heisenberg calculus sense and let $Q\in \pvdo^{-m}(M,\cE)$ be a 
parametrix for $P$. Then $Q$ is uniquely defined modulo smoothing operators, so the logarithmic singularity $c_{Q}(x)$ does not depend on the 
particular choice of $Q$. 

\begin{definition}
  If $P\in \pvdo^{m}(M,\cE)$, $m \in \Z$, has an invertible principal symbol, then its Green kernel logarithmic singularity is the density
  \begin{equation}
      \gamma_{P}(x):=c_{Q}(x),
  \end{equation}
  where $Q\in \pvdo^{-m}(M,\cE)$ is any given parametrix for $P$.
\end{definition}

In the same way as for classical \psidos\ the logarithmic singularity densities are related to the construction of the noncommutative residue trace 
for the Heisenberg calculus (see~\cite{Po:JFA1}). 

Let $\pvdoi(M,\cE) = \cup_{\Re m < -(d+2)}\pdo^{m}(M,\cE)$ be the class of \psivdos\ whose symbols are integrable with respect to the $\xi$-variable. If 
$P$ is an operator in this class, then the restriction of its Schwartz kernel $k_{P}(x,y)$ to the diagonal defines a smooth $\End \cE$-valued density 
$k_{P}(x,x)$. In particular, if $M$ is compact, then $P$ is trace-class and its trace is given by~(\ref{eq:Log-sing.trace}). 

The map $P\rightarrow k_{P}(x,x)$ admits an analytic continuation $P\rightarrow t_{P}(x)$ to the class 
$\pvdocz(M,\cE)$ of non-integer order \psivdos, where is analyticity is meant with respect to holomorphic families of \psivdos\ as defined in ~\cite{Po:MAMS1}.
Moreover, if $P\in \pvdoz(M,\cE)$ and if $(P(z))_{z\in\C}$ is a holomorphic family of 
\psivdos\  such that $\ord P(z)=\ord P +z$ and $P(0)=P$, then the map  $z\rightarrow t_{P(z)}(x)$ has at worst a simple pole 
 singularity at  $z=0$ in such way that
\begin{equation}
    \Res_{z=0} t_{P_{z}}(x)=- c_{P}(x).
    \label{eq:NCRHC.residue-densities}
\end{equation}

Assume now that $M$ is compact. Then the \emph{noncommutative residue} for the Heisenberg calculus is the linear functional $\Res$ on $\pvdoz(M,\cE)$ 
defined by 
\begin{equation}
     \Res P :=\int_{M}\tr_{\cE} c_{P}(x) \qquad \forall P\in \pvdoz(M,\cE).
 \end{equation}

 It follows from~(\ref{eq:NCRHC.residue-densities}) that if $(P(z))_{z\in\C}$ is a holomorphic family of 
\psivdos\ such that $\ord P(z)=\ord P +z$ and $P(0)=P$, then the map $z\rightarrow \Tra P(z)$ has an analytic extension to $\CZ$ with at worst a 
simple at $z=0$ in such way that 
\begin{equation}
    \Res_{z=0}\Tra P(z)=-\Res P.
\end{equation}
Using this it is not difficult to check that the above noncommutative residue is a trace on $\pvdoz(M,\cE)$.
This is even the unique trace up to constant multiple when $M$ is connected (see~\cite{Po:JFA1}).

Finally, suppose that $M$ is endowed with a positive density and $\cE$ is endowed with a Hermitian metric. Let
 $P:C^{\infty}(M,\cE)\rightarrow C^{\infty}(M,\cE)$ be a selfadjoint \psivdo\ of integer order $m\geq 1$ such that the union set $\theta(P)$ of the principal cuts of 
 its principal symbol agrees with $\C\setminus [0,\infty)$ (see~\cite{Po:CPDE1} for the precise definition of a principal cut). This implies that the 
 principal symbol of $P$ is invertible in the Heisenberg calculus sense. This also implies that $P$ is bounded from 
below, hence gives rise to a heat semigroup $e^{-tP}$, $t\geq 0$.

For any $t>0$ the operator $e^{-tP}$  has a smooth Schwartz kernel $k_{t}(x,y)$ in 
$C^{\infty}(M,\cE)\hotimes C^{\infty}(M,\cE^{*}\otimes |\Lambda|(M))$, and as 
$t\rightarrow 0^{+}$ we have the heat kernel asymptotics, 
\begin{equation}
    k_{t}(x,x)\sim t^{-\frac{d+2}{m}}\sum_{j\geq 0} t^{\frac{j}{m}}a_{j}(P)(x) + \log t\sum_{k\geq 0}t^{k}b_{k}(P)(x),
     \label{eq:Heisenberg.heat-kernel-asymptotics}
\end{equation}
where the asymptotics takes place in $C^{\infty}(M,\End \cE \otimes |\Lambda|(M))$, and when $P$ is a differential operator we have 
$a_{2j-1}(P)(x)=b_{j}(P)(x)=0$ for all $j\in \N$ (see~\cite{BGS:HECRM}, \cite{Po:MAMS1}, \cite{Po:CPDE1}).

As in~(\ref{eq:Log-sing.Log-sing-heat-kernel}) if for $j=0,\ldots, n-1$ we set $\sigma_{j}=\frac{d+2-2j}{m}$, then we have
\begin{equation}
mc_{P^{-\sigma_{j}}}(x)=\Res_{s=\sigma_{j}}t_{P^{-s}}(x)=\Gamma(\sigma_{j})^{-1}a_{2j}(P)(x).
\end{equation}
In particular, we get
\begin{equation}
     m\gamma_{P}(x)= a_{d+2-m}(P)(x).
     \label{eq:Heisenberg.Green-heat}
\end{equation}

\section{Logarithmic singularities of contact invariant operators}
\label{sec:contact-invariance}
The aim of this section is to prove an analogue of Proposition~\ref{prop:Conformal.main} in the setting of contact geometry. 

Let $(M^{2n+1},H)$ be an orientable contact manifold. This means that $(M,H)$ is an orientable Heisenberg manifold such 
such that $H$ can be represented as the annihilator of a 
\emph{globally} defined contact form, that is, a 1-form $\theta$ on $M$ such that $H=\ker \theta$ and $d\theta_{|H}$ is nondegenerate. 
We further assume that $\theta$ is chosen in such way that the top-degree form 
$d\theta^{n}\wedge \theta$ is in the orientation class of $M$. This uniquely determines the contact form $\theta$ up to a conformal factor.

 As we will recall in Section~\ref{sec:CR-GJMS}, the CR GJMS of Gover-Graham~\cite{GG:CRIPSL} on a pseudohermitian manifold transform covariantly under a 
 conformal change of contact form. These operators 
 include the CR Yamabe operator of Jerison-Lee~\cite{JL:YPCRM}, 
 for which N.K.~Stanton~\cite[p.~276]{St:SICRM} determined the behavior of the logarithmic singularity of 
 the Green kernel under a conformal change of contact form. 
 
 More generally, let $\Theta$ be the class of contact forms on $M$ that are conformal multiples of $\theta$, and let 
 let $(P_{\hat{\theta}})_{\hat{\theta}\in \Theta} \subset \pvdo^{m}(M,\cE)$ 
 be a family of $m$th order \psivdos\ in such way that there exist real numbers $w$ and 
$w'$ so that, for any $f$ in $C^{\infty}(M,\R)$, we have 
\begin{equation}
    P_{e^{f}\theta}=e^{w'f} P_{\theta}e^{-wf} \quad \bmod \pvdo^{-\infty}(M,\cE).
    \label{eq:Contact.covariancePtheta}
\end{equation}
 Then the following holds.
 
\begin{proposition}\label{prop:Contact.main}
1) We have
\begin{equation}
    c_{P_{e^{f}\theta}}(x)=e^{-(w-w')f(x)}c_{P_{\theta}}(x) \qquad \forall f\in C^{\infty}(M,\R).
     \label{eq:Contact.contact-invariance-Ptheta}
\end{equation}

2) Suppose that the principal symbol of $P_{\theta}$ is invertible in the sense of the Heisenberg calculus. Then we have
\begin{equation}
    \gamma_{P_{e^{f}\theta}}(x)=e^{-(w'-w)f(x)}\gamma_{P_{\theta}}(x) \qquad \forall f\in C^{\infty}(M,\R).
\end{equation}

3) Suppose that, for any $\hat{\theta}\in \Theta$, the operator is $P_{\hat{\theta}}$ is selfadjoint with respect to some density on $M$ 
and some Hermitian metric on $\cE$, and the union-set of the principal cuts of the principal symbol of $P_{\hat{\theta}}$ is $\C\setminus [0,\infty)$. Then we have
 \begin{equation}
     a_{2n+2-m}(P_{e^{f}\theta})(x)=e^{-(w'-w)f(x)}a_{2n+2-m}(P_{\theta})(x) \quad \forall f\in C^{\infty}(M,\R),
 \end{equation}
where $a_{2n+2-m}(P_{\hat{\theta}})(x)$ is the coefficient of $t^{-1}$ in the heat kernel asymptotics~(\ref{eq:Heisenberg.heat-kernel-asymptotics}) 
for~$P_{\hat{\theta}}$. 
\end{proposition}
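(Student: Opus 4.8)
The strategy is to follow exactly the pattern of the proof of Proposition~\ref{prop:Conformal.main}, working in privileged coordinates (Definition~\ref{def:Heisenberg.privileged-coordinates}) instead of ordinary local coordinates, and replacing the Euclidean norm $|x-y|$ by the Heisenberg pseudonorm $\|\psi_x(y)\|$ throughout. For part~1), fix $f\in C^\infty(M,\R)$, set $\hat\theta=e^f\theta$, and let $k_{P_\theta}(x,y)$ and $k_{P_{\hat\theta}}(x,y)$ be the respective Schwartz kernels. From the covariance~(\ref{eq:Contact.covariancePtheta}) one gets, near the diagonal,
\begin{equation}
    k_{P_{\hat\theta}}(x,y)=e^{w'f(x)}k_{P_\theta}(x,y)e^{-wf(y)}+\op{O}(1).
\end{equation}
Plugging in the expansion~(\ref{eq:Heisenberg.log-sing}) for $k_{P_\theta}$ and writing $b(x,y)=e^{-wf(y)+w'f(x)}$, the point is to Taylor-expand $b(x,y)$ in the variable $\psi_x(y)$ and check two things: that multiplying a term $a_j(x,-\psi_x(y))$ homogeneous of degree $j$ (in the anisotropic sense) by a monomial in $\psi_x(y)$ of weighted degree $\geq 1$ raises the homogeneity degree, so that only finitely many such products survive modulo $\op{O}(1)$; and that a weighted-degree-$\geq 1$ monomial times $\log\|\psi_x(y)\|$ is $\op{O}(1)$. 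Granting these, $b(x,y)\log\|\psi_x(y)\|=b(x,x)\log\|\psi_x(y)\|+\op{O}(1)=e^{-(w-w')f(x)}\log\|\psi_x(y)\|+\op{O}(1)$, and comparing with~(\ref{eq:Heisenberg.log-sing}) gives $c_{P_{\hat\theta}}(x)=e^{-(w-w')f(x)}c_{P_\theta}(x)$.

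For part~2), argue exactly as in the conformal case: if $Q_\theta\in\pvdo^{-m}(M,\cE)$ and $Q_{\hat\theta}\in\pvdo^{-m}(M,\cE)$ are parametrices for $P_\theta$ and $P_{\hat\theta}$, then~(\ref{eq:Contact.covariancePtheta}) gives $P_{\hat\theta}e^{wf}Q_\theta e^{-w'f}=1$ modulo $\pvdo^{-\infty}$, and multiplying by $Q_{\hat\theta}$ yields $Q_{\hat\theta}=e^{wf}Q_\theta e^{-w'f}$ modulo $\pvdo^{-\infty}$; the hypothesis that the principal symbol of $P_\theta$ is invertible in the Heisenberg sense guarantees such parametrices exist. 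Applying part~1) to the family $(Q_{\hat\theta})$ (which satisfies the covariance relation with $w$ and $w'$ swapped) gives the stated transformation of $\gamma_{P_\theta}(x)=c_{Q_\theta}(x)$. For part~3), the self-adjointness together with the condition on the principal cuts ($\theta(P_{\hat\theta})=\C\setminus[0,\infty)$) gives, via the discussion preceding~(\ref{eq:Heisenberg.heat-kernel-asymptotics}), both the existence of the heat semigroup and the identity~(\ref{eq:Heisenberg.Green-heat}), namely $m\gamma_{P_{\hat\theta}}(x)=a_{2n+2-m}(P_{\hat\theta})(x)$ (here $d+1=2n+1$, so $d+2=2n+2$); combining this with part~2) yields the claimed transformation rule for $a_{2n+2-m}$.

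The main obstacle is the anisotropic bookkeeping in part~1). Two subtleties distinguish it from the Euclidean case. First, one must use \emph{privileged} coordinates, not arbitrary local coordinates, so that $\|\psi_x(y)\|$ is the correct quasi-norm and the homogeneity in~(\ref{eq:Heisenberg.log-sing}) is genuinely the anisotropic one; the Taylor expansion of $b$ should therefore be organized by weighted order $\brak\alpha=2\alpha_0+\alpha_1+\cdots+\alpha_d$ rather than ordinary order, and one needs $b(x,y)-\sum_{\brak\alpha< N}\tfrac{1}{\alpha!}\partial_y^\alpha b(x,x)\,(\psi_x(y))^\alpha=\op{O}(\|\psi_x(y)\|^N)$, which follows from ordinary Taylor's theorem since $\|\cdot\|$ dominates the relevant monomials. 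Second, one should check that the $\op{O}(1)$ error term in the covariance relation~(\ref{eq:Contact.covariancePtheta}) — coming from the smoothing remainder — really does contribute only $\op{O}(1)$ near the diagonal, which is immediate since smoothing operators have smooth kernels. Everything else is a routine transcription of the conformal argument, using that the logarithmic singularity density $c_P(x)$ of a \psivdo\ is well defined (Proposition~\ref{prop:Heisenberg.Log-sing}) in place of the classical statement.
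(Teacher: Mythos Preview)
Your proposal is correct and follows essentially the same approach as the paper's proof: the same kernel relation from covariance, the same anisotropic Taylor expansion of $b$ in privileged coordinates organized by weighted order $\brak\alpha$, the same observation that weighted-degree-$\geq 1$ monomials times $\log\|\psi_x(y)\|$ are $\op{O}(1)$, and the same parametrix argument for part~2) and appeal to~(\ref{eq:Heisenberg.Green-heat}) for part~3). If anything, you are slightly more explicit than the paper about the anisotropic bookkeeping (the paper simply writes the weighted Taylor remainder as in~(\ref{eq:Contact.Taylor-b}) and refers back to the proof of Proposition~\ref{prop:Conformal.main}).
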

 \begin{proof}
     The proof is similar to that of Proposition~\ref{prop:Conformal.main}. Let $f \in C^{\infty}(M,\R)$, set $\hat{\theta}=e^{f}\theta$ and let 
     $k_{P_{\theta}}(x,y)$ and $k_{P_{\hat{\theta}}}(x,y)$ denote the respective Schwartz kernels of $P_{\theta}$ and $P_{\hat{\theta}}$. Then 
     it follows from~(\ref{eq:Contact.contact-invariance-Ptheta}) that we have 
  \begin{equation}
     k_{P_{\hat{\theta}}}(x,y)=e^{w'f(x)} k_{P_{\theta}}(x,y) e^{-wf(y)}+\op{O}(1).
      \label{eq:Contact.kPtheta-kPhtheta}
 \end{equation}

Next, let $U\subset \R^{2n+1}$ be an open of local coordinates equipped with a $H$-frame $X_{0},\ldots,X_{d}$. By Proposition~\ref{prop:Heisenberg.Log-sing} the kernel 
$k_{P_{\theta}}(x,y)$ has a behavior near the diagonal of the form
\begin{equation}
    k_{P_{\theta}}(x,y)=
    \!\!\!\sum_{-(m+2n+2) \leq j \leq -1}\!\!\!a_{j}(x,\psi_{x}(y))-c_{P_{\theta}}(x)\log\|\psi_{x}(y)\| +\op{O}(1),
\end{equation}
where $a_{j}(x,y)\in C^{\infty}(\URno)$ is homogeneous of degree $j$ with respect to $y$. Combining this with~(\ref{eq:Contact.kPtheta-kPhtheta}) 
then gives
\begin{multline}
    k_{P_{\hat{\theta}}}(x,y)=\\
    \sum_{-(m+2n+2) \leq j \leq -1}\!\!\! b(x,\psi_{x}(y))a_{j}(x,\psi_{x}(y)) -c_{P_{\theta}}(x)b(x,\psi_{x}(y))\log\|\psi_{x}(y)\| +\op{O}(1),
     \label{eq:Contact.sing-kPhtheta}
\end{multline}
where we have set $b(x,y)=e^{-wf(\psi_{x}^{-1}(y))+w'f(x)}$. 

The Taylor expansion of $b(x,y)$ near $y=0$ can be written in the form
\begin{equation}
    b(x,y)= \!\!\!\sum_{\brak\alpha< m}\!\!\!\frac{1}{\alpha!}\partial_{y}^{\alpha}b(x,0)y^{\alpha} + \sum_{\brak{\alpha}=m}y^{\alpha}r_{\alpha}(x,y),
    \label{eq:Contact.Taylor-b}
\end{equation}
where the functions $r_{\alpha}(x,y)$ are smooth near $y=x$. 
By arguing as in the proof of Proposition~\ref{prop:Conformal.main}  we can show that 
\begin{gather}
     b(x,y)a_{j}(x,y)=\sum_{\brak\alpha|+j \leq -1}\frac{1}{\alpha!}\partial_{y}^{\alpha}b(x,0)y^{\alpha}a_{j}(x,y) +\op{O}(1),
    \label{eq:Contact.sing-kPhtheta.homogeneous-terms}\\
      b(x,y)\log\|y\| = b(x,0)\log\|y\|+\op{O}(1)=e^{-(w-w')f(x)}\log\|y\| +\op{O}(1).
     \label{eq:Contact.sing-kPhg.log-term}
\end{gather}
Combining this with~(\ref{eq:Contact.sing-kPhtheta}) then shows that
\begin{multline}
     k_{P_{\hat{\theta}}}(x,y)= \\ \sum_{-(m+2n+2) \leq |\alpha|+j \leq -1}\frac{1}{\alpha!}\partial_{y}^{\alpha}b(x,0)\psi_{x}(y)^{\alpha}a_{j}(x,y)  
     -c_{P_{\theta}}(x)e^{-(w-w')f(x)}\log\|\psi_{x}(y)\|  
     +\op{O}(1).
\end{multline}
This shows that $c_{P_{\hat{\theta}}}(x)=e^{-2(w-w')f(x)}c_{P_{\theta}}(x)$ as desired, so the 1st part of the proposition is proved. 

Next, suppose that the principal symbol of $P_{\theta}$ is invertible in the Heisenberg calculus sense. Because of~(\ref{eq:Contact.covariancePtheta}) this implies that the 
principal symbol of $P_{\hat{\theta}}$ is invertible as well. Let $Q_{\theta}\in \pvdo^{-m}(M,\cE)$ be a parametrix for $P_{\theta}$ and, similarly, 
let $Q_{\hat{\theta}}\in \pvdo^{-m}(M,\cE)$ be a parametrix for $P_{\hat{\theta}}$.
By arguing as in the proof of Proposition~\ref{prop:Conformal.main} we can show that 
\begin{equation}
    Q_{\hat{\theta}}=  e^{wf}Q_{\theta} e^{-w'f} \qquad \bmod \Psi^{-\infty}(M,\cE).
\end{equation}
Therefore, it follows from the first part of the proof that
\begin{equation}
    \gamma_{P_{\theta}}(x)=c_{Q_{\theta}}(x)=e^{-(w'-w)f(x)}c_{Q_{\theta}}(x)=e^{-(w'-w)f(x)}\gamma_{P_{\theta}}(x).
\end{equation}
The 2nd part of the proposition is thus proved.

Finally, thanks to~(\ref{eq:Heisenberg.Green-heat}) the third part of the proposition is an immediate consequence of the second one. 
 \end{proof}
  
\begin{remark}
     The third part of Proposition~\ref{prop:Contact.main} 
     has also been obtained by N.K.~Stanton~\cite[Thm.~3.3]{St:SICRM} in the special case of the CR Yamabe operator on a pseudohermitian 
     manifold. 
\end{remark}
 
\section{Pseudohermitian Invariant \psivdos\ and Their Logarithmic Singularities}
\label{sec:Pseudohermitian}
In this section,  after some preliminary work on local pseudohermitian invariants and pseudohermitian invariant \psivdos, we shall prove that
the logarithmic singularities of the Schwartz kernels and Green kernels of pseudohermitian invariant \psivdos give rise local pseudohermitian invariants. 

\subsection{The geometric set-up}
Let $(M^{2n+1},H)$ be a compact orientable CR manifold. Thus $(M^{2n+1},H)$ is a Heisenberg manifold and $H$ is equipped with a complex structure $J\in 
C^{\infty}(M,\End H)$, $J^{2}=-1$, in such way that $T_{1,0}:=\ker (J+i)\subset T_{\C}M$ is a complex rank $n$ subbundle integrable in 
Fr\"obenius' sense (i.e.~$C^{\infty}(M,T_{1,0})$ is closed under the Lie bracket of vector fields). In addition, we 
set $T_{0,1}:=\overline{T_{1,0}}=\ker(J-i)$. 

Since $M$ is orientable and $H$ is orientable by means of its complex structure, there exists a global non-vanishing real 1-form $\theta$ such 
that $H=\ker \theta$. Associated to $\theta$ is its Levi form, i.e., the Hermitian form on $T_{1,0}$ such that
\begin{equation}
    L_{\theta}(Z,W)=-id\theta(Z,\overline{W})=i\theta([Z,W]) \qquad \forall Z,W \in C^{\infty}(M,T_{1,0}).
\end{equation}

We further assume that  $M$ is strictly pseudoconvex, that is, we can choose $\theta$ so that $L_{\theta}$ is 
positive definite at every point. In particular $\theta$ is a contact form on $M$. In the terminology 
of~\cite{We:PHSRH} the datum of such a contact form defines a \emph{pseudohermitian structure} on $M$. 

Since $\theta$ is a contact form there exists a unique vector field $X_{0}$ on $M$, called 
the \emph{Reeb field}, such that $\iota_{X_{0}}\theta=1$ and $\iota_{X_{0}}d\theta=0$. Let $\cN\subset T_{\C}M$ be the complex line bundle spanned by 
$X_{0}$. We then have the splitting
\begin{equation}
    T_{\C}M=\cN\oplus T_{1,0}\oplus T_{0,1}.
    \label{eq:CR.splitting-TcM}
\end{equation}
The Levi metric $h_{\theta}$ is the unique Hermitian metric on $T_{\C}M$ such that: \smallskip

- The splitting~(\ref{eq:CR.splitting-TcM}) is orthogonal with respect to $h_{\theta}$;\smallskip

 - $h_{\theta}$ commutes with complex conjugation;\smallskip

- We have $h(X_{0},X_{0})=1$ and $h_{\theta}$ agrees with $L_{\theta}$ on $T_{1,0}$.\smallskip

\noindent Notice that the volume form of $h_{\theta}$ is $\frac{1}{n!}d\theta^{n}\wedge \theta$.

As proved by Tanaka~\cite{Ta:DGSSPCM} and Webster~\cite{We:PHSRH} the datum of the pseudohermitian contact 
form $\theta$ uniquely defines a connection, the \emph{Tanaka-Webster connection}, which preserves the pseudohermitian structure of $M$, i.e., such that 
$\nabla \theta=0$ and $\nabla J=0$. It can be defined as follows. 

Let $\{Z_{j}\}$ be a frame of $T_{1,0}$. We set $Z_{\bar{j}}=\overline{Z_{j}}$. Then 
$\{X_{0},Z_{j},Z_{\bar{j}}\}$ forms a frame of $T_{\C}M$. In the sequel such a frame will be called an \emph{admissible frame} of $T_{\C}M$. 
Let $\{\theta,\theta^{j}, \theta^{\bar{j}}\}$ be the coframe of $T^{*}_{\C}M$ dual to $\{X_{0},Z_{j},Z_{\bar{j}}\}$. With respect to this coframe
we can write $d\theta =ih_{j\bar{k}}\theta^{j}\wedge \theta^{\bar{k}}$. 

Using the matrix $(h_{j\bar{k}})$ and its inverse $(h^{j\bar{k}})$ to lower and raise indices, the connection 1-form 
$\omega=(\omega_{j}^{~k})$ and the 
 torsion form $\tau_{j}=A_{jk}\theta^{k}$ of the Tanaka-Webster connection  are uniquely determined by the relations
\begin{equation}
     d\theta^{k}=\theta^{j}\wedge \omega_{j}^{~k}+\theta \wedge \tau^{k}, \qquad 
     \omega_{j\bar{k}} + \omega_{\bar{k}j} 
         =dh_{j\bar{k}}, \qquad  A_{jk}=A_{k j}. 
     \label{eq:CR.TW-connection}
\end{equation}
In addition, we have the structure equations
\begin{equation}
  d\omega_{j}^{~k}-\omega_{j}^{~l}\wedge \omega_{l}^{~k}=R_{j~l\bar{m}}^{~k} \theta^{l}\wedge \theta^{\bar{m}} + 
    W_{j\bar{k}l}\theta^{l}\wedge \theta -  W_{\bar{k}j\bar{l}}\theta^{\bar{l}}\wedge \theta 
    +i\theta_{j}\wedge \tau_{\bar{k}}-i\tau_{j}\wedge \theta_{\bar{k}}.
     \label{eq:CR.TW-curvature}
\end{equation}

The \emph{pseudohermitian curvature tensor} of the Tanaka-Webster connection is the tensor with components $R_{j\bar{k} l\bar{m}}$, its \emph{Ricci tensor} is 
$ \rho_{j\bar{k}}:=R_{l~j \bar{k}}^{~l}$ and its \emph{scalar curvature} is $\kappa_{\theta} :=\rho_{j}^{~j}$.

\subsection{Local pseudohermitian invariants}
Let us now define local pseudohermitian invariants. The definition is a bit more complicated than that of a local Riemannian invariants, because:\smallskip 

- The components of the Tanaka-Webster connections and its curvature and torsion tensors are defined with respect to the datum of a local frame 
$Z_{1},\ldots,Z_{n}$ which does not correspond to frame given by derivatives with respect to coordinate functions;\smallskip

- In order to get local pseudohermitian invariants from pseudohermitian invariant \psivdos\ it is important to take into the tangent group bundle of a CR 
manifold, in which the Heisenberg group comes into play.\smallskip
 
This being said, in order to define local pseudohermitian invariants some notation need to be introduced.

Let $U\subset \Rn$ be an open of local coordinates equipped with a frame 
$Z_{1},\ldots,Z_{n}$ of $T_{1,0}$. We set $Z_{j}=X_{j}-iX_{n+j}$, where $X_{j}$ and $X_{n+j}$ are real vector fields. Then $X_{0},\ldots,X_{2n}$ is a 
local $H$-frame of $TM$. We shall call this frame the \emph{$H$-frame associated to $Z_{1},\ldots,Z_{n}$.}

Let $\eta^{0},\ldots,\eta^{2n}$ be the coframe of $T^{*}M$  dual to $X_{0},\ldots,X_{2n}$ (so that $\eta^{0}=\theta$). We set 
$X_{j}=X_{j}^{~k}\partial_{x^{k}}$ and $\eta^{j}=\eta_{~k}^{j}dx^{k}$. We also set $Z_{j}=Z_{j}^{~k}\partial_{x^{k}}$. It will be convenient to 
identify $X_{0}(x)$ with the vector $(X_{0}^{~k}(x))\in \R^{2n+1}$ and $Z(x):=(Z_{1}(x),\ldots,Z_{n}(x))$ with the matrix $(Z_{j}^{~k}(x))$ in 
$M_{n,2n+1}(\C)^{\times}$, where the latter denotes the open subset of $M_{n,2n+1}(\C)$ consisting of regular matrices. 

For $j,\bar{k}=1,\ldots,n$ we set $h_{j\bar{k}}=h_{\theta}(Z_{j},Z_{k})=i\theta([Z_{j},Z_{\bar{k}}])$, and for $j,k=1,\ldots,2n$ we set 
$L_{jk}=\theta([X_{j},X_{k}])$. Let $M_{n}(\C)_{+}$ denote the open cone of positive definite Hermitian $n\times n$ matrices. In the sequel it will 
also be convenient to identify 
$h_{\theta}$ with the matrix $h_{\theta}(x):=(h_{j\bar{k}}(x))\in M_{n}(\C)_{+}$. 

Thanks to the integrability of $T_{1,0}$ we have $\theta([Z_{j},Z_{k}])=0$. 
As we have $[Z_{j},Z_{k}]=[X_{j},X_{k}]-[X_{n+j},X_{n+k}]-i([X_{n+j},X_{k}]+[X_{j},X_{n+k}])$ we see that  
\begin{equation}
     L_{n+j,n+k}=L_{j,k}\qquad  \text{and} \qquad L_{j,n+k}=-L_{n+j,k}. 
\end{equation}
Since 
$[Z_{j},Z_{\bar{k}}]=[X_{j},X_{k}]+[X_{n+j},X_{n+k}]+i([X_{n+j},X_{k}]-[X_{j},X_{n+k}])$ we get
\begin{equation}
  h_{j\bar{k}}= i\theta([Z_{j},Z_{\bar{k}}])= 2iL_{jk}+2L_{n+jk}. 
     \label{eq:CR.L-h0}
\end{equation}
In other words, we have 
\begin{equation}
    (L_{jk})= \frac{1}{2}\left(
    \begin{array}{cc}
        \Im h & -\Re h  \\
        \Re h & \Im h
    \end{array}\right). 
     \label{eq:CR.L-h}
\end{equation}

For any $a \in U$ we let  $\psi_{a}$ be the affine change of variables
to the privileged coordinates centered at $a$ (cf.~Definition~\ref{def:Heisenberg.privileged-coordinates}). 
One checks that $\psi_{a}(x)^{j}=\eta^{j}_{~k}(x^{k}-a^{k})$, so we have 
\begin{equation}
    \psi_{a*}X_{j}=X_{j}^{~k}(\psi_{a}(x))\eta^{l}_{~k}(a)\partial_{l}.
\end{equation}
Given a vector field $X$ defined near $x=0$ let us denote  $X(0)_{l}$ the vector field obtained as the part in the Taylor expansion at $x=0$ of 
$X$ which is homogeneous of degree $l$ with respect to the Heisenberg dilations~(\ref{eq:HC.Heisenberg-dilations}). Then the Taylor 
expansions at $x=0$ of the vector fields $\psi_{a*}X_{0}, \ldots,  \psi_{a*}X_{2n}$ take the form 
\begin{gather}
    X_{0}=X_{0}^{(a)}+X_{0}(0)_{(-1)}+\ldots, \label{eq:CR.asymptotics-vector-fields1} \\ 
    X_{j}=X_{j}^{(a)}+X_{j}(0)_{(0)}+\ldots, \quad 1\leq j \leq 2n,
     \label{eq:CR.asymptotics-vector-fields2}
\end{gather}
with
\begin{equation}
    X_{0}^{(a)}=\partial_{x^{0}}, \qquad 
    X_{j}^{(a)}=\partial_{x^{j}} + b_{jk}(a)x^{k}\partial_{x^{0}}, \quad 
     1\leq j\leq 2n,
     \label{eq:CR.model-vector-fields}
\end{equation}
where we have set $b_{jk}(a):=\partial_{k}[X_{j}^{~l}(\psi_{a}(x))]_{|x=0}\eta^{0}_{~l}(a)$. Notice that $X_{0}^{(a)}$ is homogeneous of degree~$-2$, while 
$X_{1}^{(a)},\ldots,X_{2n}^{(a)}$ are homogeneous of degree~$-1$.

The linear span of the vector fields $X_{0}^{(a)},\ldots,X_{2n}^{(a)}$ is a 2-step niloptent Lie algebra under the Lie bracket of 
vector fields. Therefore, this is 
the Lie algebra of left-invariant vector fields on a 2-step nilpotent Lie group $G^{(a)}$. The latter can be realized as $\Rtn$ equipped with the 
product,
\begin{equation}
    x.y=(x^{0}+y^{0}+b_{kj}(a)x^{j}y^{k},x^{1}+y^{1},\ldots,x^{2n}+y^{2n}).
\end{equation}
Notice that $[X_{j}^{(a)},X_{k}^{(a)}]=(b_{kj}(a)-b_{jk}(a))X_{0}^{(a)}$. In addition, we can check that
$[\psi_{a*}X_{j},\psi_{a*}X_{k}](0)=(b_{kj}(a)-b_{jk}(a))\partial_{x^{0}} \bmod H_{0}$. Thus, 
\begin{multline}
    L_{jk}(a)=\theta(X_{j},X_{k})(a)=  (\psi_{a*}\theta)([\psi_{a*}X_{j},\psi_{a*}X_{k}])(0) \\ 
    =\acou{dx^{0}}{[\psi_{a*}X_{j},\psi_{a*}X_{k}](0)}=b_{kj}(a)-b_{jk}(a). 
     \label{eq:CR.Ljk-bjk}
\end{multline}
This shows that $G^{(a)}$ has the same constant structures as the tangent group $G_{a}M$, hence is isomorphic to it (see~\cite{Po:Pacific1}). This also implies that 
$(-\frac{1}{2}L_{jk}(a))$ is the skew-symmetric part of $(b_{jk}(a))$.   
For $j,k=1,\ldots,2n$ set $\mu_{jk}(a)=b_{jk}(a)+\frac{1}{2}L_{jk}(a)$. The matrix $(\mu_{jk}(a))$ is the symmetric part of $(b_{jk}(a))$, so it 
belongs to the space $S_{2n}(\R)$ of symmetric $2n\times 2n$ matrices with real coefficients. 

In the sequel we set 
\begin{equation}
    \Omega=M_{n}(\C)_{+}\times \R^{2n+1}\times M_{n,2n+1}(\C)^{\times}\times 
    S_{2n}(\R).
\end{equation}
 This is a manifold, and for any $x \in U$ the quadruple $(h(x),X_{0}(x),Z(x),\mu(x))$ is an element of $\Omega$ depending smoothly on $x$. 

In addition, we let $\cP$ be the set of monomials in the undetermined variables $\partial^{\alpha} X_{0}^{~k}$, $\partial^{\alpha}Z_{j}^{~k}$ and 
$\partial^{\alpha} \overline{Z_{j}^{~k}}$, where the integer $j$ ranges over $\{1,\ldots,n\}$, the integer 
$k$ ranges over $\{0,\ldots,2n\}$, and $\alpha$ ranges over all multi-orders in $\N_{0}^{2n}$. Given the Reeb field $X_{0}$ and a local frame 
$Z_{0},\ldots,Z_{n}$ of $T_{1,0}$ by plugging  $\partial^{\alpha}_{x} X_{0}^{~k}(x)$, $\partial^{\alpha}_{x}Z_{j}^{~k}(x)$ and 
$\partial^{\alpha} \overline{Z_{j}^{~k}}(x)$ into a monomial $\mathfrak{p}\in \cP$ we get a function which we shall denote 
$\fp(X_{0},Z,\overline{Z})(x)$.

 Bearing all this mind we define local pseudohermitian invariants as follows.

\begin{definition}
    A local pseudohermitian invariant of weight $w$ is the datum on each pseudohermitian manifold $(M^{2n+1},\theta)$ of a function 
    $\cI_{\theta}\in C^{\infty}(M)$ such that:\smallskip
    
    (i) There exists a finite family $(a_{\fp})_{\fp \in \cP}\subset C^{\infty}(\Omega)$ 
   such that, in any local coordinates equipped with a frame $Z_{1},\ldots,Z_{n}$ of $T_{1,0}$, we have 
    \begin{equation}
        \cI_{\theta}(x)= \sum_{\fp \in \cP} a_{\fp}(h(x),X_{0}(x),Z(x), \mu(x)) \fp(X_{0},Z,\overline{Z})(x) .
         \label{eq:CR.pseudohermitian-invariants}
    \end{equation}
     
    (ii) We have $\cI_{t \theta}(x)= t^{-w}\cI_{\theta}(x)$ for any $t>0$.
\end{definition}

Any local Riemannian invariant of $h_{\theta}$ is a local pseudohermitian 
invariant. However, the above notion of weight for pseudohermitian invariant is anisotropic with respect to $h_{\theta}$. For instance if we replace $\theta$ by 
$t\theta$ then $h_{\theta}$ is rescaled by $t$ on $T_{1,0}\oplus T_{0,1}$ and by $t^{2}$ on the vertical line bundle $\cN\otimes \C$.

On the other hand, as shown in~\cite[Prop.~2.3]{JL:ICRNCCRYP} by means of parallel translation 
  along parabolic geodesics any orthonormal frame $Z_{1}(a),\ldots,Z_{n}(a)$ of $T_{1,0}$ at a point $a\in M$ can be extended  into a 
  local frame $Z_{1},\ldots,Z_{n}$ of $T_{1,0}$ near $a$. Such a frame is called a \emph{special orthonormal frame}. 
  
  Furthermore, as also shown in~\cite[Prop.~2.3]{JL:ICRNCCRYP} any special orthonormal frame $Z_{1},\ldots,Z_{n}$ near $a$ allows us to 
  construct \emph{pseudohermitian normal coordinates} $x_{0},z^{1}=x^{1}+ix^{n+1},
  \ldots, z^{n}=x^{n}+ix_{2n}$ centered at $a$ in such way that in the notation 
  of~(\ref{eq:CR.asymptotics-vector-fields1})--(\ref{eq:CR.asymptotics-vector-fields2}) we have
  \begin{equation}
      X_{0}(0)_{(-2)}=\partial_{x^{0}}, \qquad Z_{j}(0)_{(-1)}=\partial_{z^{j}}+\frac{i}{2}\bar{z}^{j}\partial_{x^{0}}, \qquad \omega_{j\bar{k}}(0)=0.
       \label{eq:XZw.normal-coordinates}
  \end{equation}

  Set $Z_{j}=X_{j}-iX_{n+j}$,  where $X_{j}$ and $X_{n+j}$ are real vector fields. Then we have
$X_{j}(0)_{(-1)}=\partial_{x^{j}}-\frac{1}{2}x^{n+j}\partial_{x^{0}}$ and $X_{n+j}(0)_{(-1)}=\partial_{x^{n+j}}+\frac{1}{2}x^{j}\partial_{x^{0}}$. In 
particular, we have $X_{j}(0)=\partial_{x^{j}}$ for $j=0,\ldots,2n$. This implies that the affine change of variables $\psi_{0}$ to the privileged coordinates at $0$ is 
just the identity. Moreover, in the notation of~(\ref{eq:CR.model-vector-fields}) for $j=1,\ldots,n$ we have
\begin{equation}
    X_{j}^{(0)}=\partial_{x^{j}}-\frac{1}{2}x^{n+j}\partial_{x^{0}}, \qquad X_{n+j}^{(0)}=\partial_{x^{n+j}}+\frac{1}{2}x^{j}\partial_{x^{0}}.
     \label{eq:X.normal-coordinates}
\end{equation}
Incidentally, this shows that the matrix $(b_{jk}(0))$ is skew-symmetric, so its symmetric part vanishes, i.e., we have $\mu(0)=0$. 

\begin{proposition}
    Assume that on each pseudohermitian manifold $(M^{2n+1},\theta)$  there is the datum of a function $\cI_{\theta}\in C^{\infty}(M)$ such that 
    $\cI_{t \theta}(x)= t^{-w}\cI_{\theta}(x)$ for any $t>0$. Then the following are equivalent:\smallskip
    
    (i) $\cI_{\theta}(x)$ is a local pseudohermitian invariant;\smallskip
 
   (ii) There exists a finite family $(a_{\fp})_{\fp \in \cP}\subset \C$ such that, for any  pseudohermitian manifold $(M^{2n+1},\theta)$ and any point $a 
   \in M$, in any pseudohermitian normal coordinates centered at $a$ associated to any given special orthonormal frame $Z_{1},\ldots,Z_{n}$ of $T_{1,0}$ 
   near $a$, we have 
   \begin{equation}
       \cI_{\theta}(a)= \sum_{\fp \in \cP} a_{\fp}\fp(X_{0},Z,\overline{Z})(x)_{|x=0}.
   \end{equation}
    
   (iii) $\cI_{\theta}(x)$ is a universal linear combination of complete tensorial contractions of covariant derivatives of the pseudohermitian 
   curvature tensor and  of the torsion tensor of the Tanaka-Webster connection.
\end{proposition}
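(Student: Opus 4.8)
The plan is to establish the cycle of implications (i)$\Rightarrow$(ii)$\Rightarrow$(iii)$\Rightarrow$(i); the homogeneity $\cI_{t\theta}(x)=t^{-w}\cI_{\theta}(x)$ is assumed throughout and is used only to fix the weight of the resulting curvature--torsion contractions at the very end. For (i)$\Rightarrow$(ii) I would work in pseudohermitian normal coordinates centered at a point $a$, associated to a special orthonormal frame $Z_{1},\ldots,Z_{n}$ of $T_{1,0}$. By the normalizations \eqref{eq:XZw.normal-coordinates}--\eqref{eq:X.normal-coordinates} together with the orthonormality $h_{j\bar{k}}(a)=\delta_{jk}$, the base-point datum $(h(0),X_{0}(0),Z(0),\mu(0))$ is a fixed element $\varpi_{0}\in\Omega$ depending neither on $(M,\theta)$ nor on $a$; evaluating \eqref{eq:CR.pseudohermitian-invariants} at $x=0$ then gives $\cI_{\theta}(a)=\sum_{\fp}a_{\fp}(\varpi_{0})\,\fp(X_{0},Z,\overline{Z})(x)_{|x=0}$ with universal constants $a_{\fp}(\varpi_{0})\in\C$, which is exactly (ii).

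The hard step is (ii)$\Rightarrow$(iii). It rests on two inputs. The first is Jerison--Lee's description of pseudohermitian normal coordinates (\cite[Prop.~2.3]{JL:ICRNCCRYP}; in particular $\omega_{j\bar{k}}(0)=0$): the Taylor coefficients at the origin of the normalized vector fields $X_{0},Z_{1},\ldots,Z_{n}$ are, conversely, universal polynomials in the frame components of the covariant derivatives $\nabla^{(k)}R$ of the pseudohermitian curvature and $\nabla^{(k)}A$ of the torsion at $a$, and vice versa --- a translation that must keep careful track of the anisotropic Heisenberg weights (Reeb direction weight $2$, horizontal directions weight $1$) entering \eqref{eq:CR.asymptotics-vector-fields1}--\eqref{eq:CR.model-vector-fields}. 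This turns the constant-coefficient expression of the previous step into a universal polynomial $P$ in the components of $\nabla^{(k)}R$ and $\nabla^{(k)}A$ at $a$. The second input is that the only remaining freedom at $a$ is the choice of orthonormal frame $Z_{1}(a),\ldots,Z_{n}(a)$, which ranges over a $U(n)$-torsor; since $\cI_{\theta}(a)$ is well defined, $P$ must be invariant under the induced $U(n)$-action on the jets. By the classical invariant theory of $U(n)$ (Weyl's fundamental theorems), together with a realizability argument in the style of Atiyah--Bott--Patodi \cite{ABP:OHEIT} showing that jets coming from genuine pseudohermitian structures are plentiful enough to force a polynomial identity (for the CR setting see also \cite{Fe:PITCA}, \cite{BEG:ITCCRG}), $P$ is a linear combination of complete tensorial contractions of covariant derivatives of $R$ and $A$, and homogeneity fixes its weight as $w$. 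I expect this to be the main obstacle, both because of the anisotropic-weight bookkeeping in the normal-coordinate dictionary and because the structure group of a strictly pseudoconvex CR manifold is not reductive, so some care is needed to reduce the invariant theory to that of $U(n)$.

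Finally, for (iii)$\Rightarrow$(i) I would take a complete contraction of covariant derivatives of $R$ and $A$ and unwind it in arbitrary local coordinates with an arbitrary frame $Z_{1},\ldots,Z_{n}$. By the structure equations \eqref{eq:CR.TW-connection}--\eqref{eq:CR.TW-curvature}, the Tanaka--Webster connection form is a universal expression, rational in $(h_{j\bar{k}})$ and polynomial in the $\partial^{\alpha}h_{j\bar{k}}$ and in the structure functions of the coframe dual to the $H$-frame; and since $h_{j\bar{k}}=i\theta([Z_{j},Z_{\bar{k}}])$, $\theta=\eta^{0}$, and $\eta^{0},\ldots,\eta^{2n}$ is dual to $X_{0},\ldots,X_{2n}$, all of these are in turn rational in $X_{0}^{~k},Z_{j}^{~k},\overline{Z_{j}^{~k}}$ and polynomial in their derivatives. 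Forming curvature, torsion, their covariant derivatives, raising indices with $h^{j\bar{k}}$, and contracting then yields an expression $\sum_{\fp}a_{\fp}(h(x),X_{0}(x),Z(x),\mu(x))\,\fp(X_{0},Z,\overline{Z})(x)$ with each $a_{\fp}$ smooth on $\Omega$ (using $h\in M_{n}(\C)_{+}$ and $Z\in M_{n,2n+1}(\C)^{\times}$); note that the variable $\mu$ need not actually occur, so the extra dependence allowed in the definition of a local pseudohermitian invariant is harmless. This establishes (i) and closes the cycle.
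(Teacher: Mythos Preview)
Your plan is correct and follows essentially the same route as the paper: the same three implications, the same specialization to pseudohermitian normal coordinates for (i)$\Rightarrow$(ii), the same Jerison--Lee dictionary plus $U(n)$-invariant theory (the paper cites~\cite[pp.~380--382]{BGS:HECRM}) for (ii)$\Rightarrow$(iii), and the same unwinding of the structure equations for (iii)$\Rightarrow$(i). Two small corrections: the Jerison--Lee input you need for the jet-to-curvature dictionary is \cite[Prop.~2.5]{JL:ICRNCCRYP} (not Prop.~2.3, which only constructs the coordinates), and the paper carries out explicitly one step you compress---namely, converting partial derivatives in the Reeb direction into horizontal covariant derivatives by writing $X_{0}=\tfrac{2}{n}\sum_{j}[X_{j},X_{n+j}]+\text{(lower order)}$ and using $\nabla_{[X,Y]}=[\nabla_{X},\nabla_{Y}]-R(X,Y)$---so that only $\nabla_{Z_{j}},\nabla_{Z_{\bar{j}}}$ remain before invoking $U(n)$-invariant theory; the realizability-of-jets worry you raise turns out not to be needed once this reduction is in place.
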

\begin{proof}
The proof will consist in proving the implications $\text{(iii)} \Rightarrow \text{(i)}$, $\text{(i)} \Rightarrow \text{(ii)}$ and $\text{(ii)} 
\Rightarrow (iii)$. We shall prove them in this order. 
    
  First, let $Z_{1},\ldots,Z_{n}$ be a local frame of $T_{1,0}$ and let $\theta^{1},\ldots,\theta^{n}$ be the corresponding coframe of $T_{1,0}$. 
  Then it follows from~(\ref{eq:CR.TW-connection}) and~(\ref{eq:CR.TW-curvature}) that in local coordinates the components  $R_{j\bar{k}l\bar{m}}$ and 
  $A_{jk}$ of its curvature and 
  torsion tensors of the Tanaka-Webster connection with respect to the frame are universal expressions of the form~(\ref{eq:CR.pseudohermitian-invariants}). 
  Therefore, any linear combination of complete tensorial contractions of covariant derivatives of the curvature and torsion tensors yield a local pseudohermitian 
  invariant. This proves the implication $\text{(iii)} \Rightarrow \text{(i)}$. 
  
  Second, let $\cI_{\theta}(x)$ be a local pseudohermitian invariant. Then there 
  exists a finite family $(a_{\fp})_{\fp \in \cP}\subset C^{\infty}(\Omega)$ such that, in any 
    local coordinates equipped with a frame $Z_{1},\ldots,Z_{n}$ of $T_{1,0}$, we have 
\begin{equation}
       \cI_{\theta}(x)= \sum_{\fp \in \cP} a_{\fp}(h(x),X_{0}(x),Z(x),\mu(x)) \fp(X_{0},Z,\overline{Z})(x).
     \label{eq:CR.pseudohermitian-invariants-bis}
\end{equation}
 
  Let $a \in M$ and let us work in normal pseudohermitian coordinates centered at $a$ and associated to a special orthonormal  frame 
  $Z_{1},\ldots,Z_{n}$ of $T_{1,0}$. Since $Z_{1},\ldots,Z_{n}$ is an orthonormal frame we have $h_{j\bar{k}}=\delta_{j\bar{k}}$. Moreover, 
  by~(\ref{eq:XZw.normal-coordinates}) we have $X_{0}(0)=\partial_{x^{0}}$ and $Z_{j}(0)=\partial_{z^{j}}$, i.e., $X_{0}(0)=(\delta_{0}^{~k})$ and
  $Z(0)=(\delta_{j}^{~k}-i\delta_{n+j}^{\mbox{~}\mbox{~}\mbox{~}\mbox{~}\mbox{~}k})$. In addition, by~(\ref{eq:X.normal-coordinates}) we have $\mu(0)=0$. 
  Set 
  $a_{\fp}=a_{\fp}((\delta_{j\bar{k}}),(\delta_{0}^{~k}), (\delta_{j}^{~k}-i\delta_{n+j}^{\mbox{~}\mbox{~}\mbox{~}\mbox{~}\mbox{~}k}),0)$. 
  Then by~(\ref{eq:CR.pseudohermitian-invariants-bis}) we have  
   \begin{equation}
      \cI_{\theta}(a)=\sum_{\fp \in \cP} a_{\fp} \fp(X_{0},Z,\overline{Z})(0).
  \end{equation}
  Since the  $a_{\fp}$'s are universal constants this shows that $\cI_{\theta}$ satisfies~(ii). The implication $\text{(i)} \Rightarrow 
  \text{(ii)}$ is thus proved. 
  
  It remains to prove the implication $\text{(ii)} \Rightarrow \text{(iii)}$. To this end assume that there exists a finite family $(a_{\fp})_{\fp 
  \in \cP}\subset \C$ such that, for any  pseudohermitian manifold $(M^{2n+1},\theta)$ and any point $a 
   \in M$, in any pseudohermitian normal coordinates centered at $a$ and associated to any given special orthonormal frame $Z_{1},\ldots,Z_{n}$ of $T_{1,0}$ 
   near $a$, we have 
   \begin{equation}
       \cI_{\theta}(a)= \sum_{\fp \in \cP} a_{\fp} \fp(X_{0},Z,\overline{Z})(x)_{|x=0}.
   \end{equation}
  
  In the sequel by order of a differential operator we mean order in the Heisenberg calculus sense, and 
  by polynomial in partial or covariant derivatives of components of some tensors or forms we mean a polynomial in these quantities \emph{and} their complex 
  conjugates. Bearing this in mind, by~\cite[Prop.~2.5]{JL:ICRNCCRYP} in normal pseudohermitian coordinates associated to any given special orthonormal frame 
  $Z_{1},\ldots,Z_{n}$ of 
  $T_{1,0}$ with dual coframe $\theta^{1},\ldots,\theta^{n}$ the following holds:\smallskip
  
  (a) At $x=0$ the partial derivatives of order $\leq N$  of the components of the contact form $\theta$ are universal polynomials in partial 
  derivatives of order $\leq N$ of the components of the forms $\theta^{j}$;\smallskip 
  
  (b) At $x=0$ the partial derivatives of order $\leq N$ of the components of the forms $\theta^{j}$ are universal polynomials in partial 
  derivatives of order $\leq N$ of the components $\omega_{j\bar{k}}$ and $A_{jk}$ of the connection 1-form and torsion tensor of the 
Tanaka-Webster connection;\smallskip
  
  (c) At $x=0$ the partial derivatives of order $\leq N$ of the components $\omega_{j\bar{k}}$ are universal polynomials in partial 
  derivatives of order $\leq N$ of the components $R_{j\bar{k}l\bar{m}}$ and $A_{jk}$ of the pseudohermitian curvature tensor and torsion tensor 
  of the Tanaka-Webster connection.\smallskip
  
  It follows from this that at $x=0$ the partial derivatives of order $\leq N$  of the components of the vector fields $X_{0},Z_{1},\ldots,Z_{n}$ 
  are universal polynomials in partial derivatives of order $\leq N$ of the components $R_{j\bar{k}l\bar{m}}$ and $A_{jk}$ of the pseudohermitian curvature tensor 
  and torsion tensor of the Tanaka-Webster connection.. Therefore $\cI_{\theta}(0)$ is a universal polynomial 
  in partial derivatives at $x=0$ of these components.
  
  Next, by definition the pseudohermitian cuvarture tensor $R_{j\bar{k}l\bar{m}}$  is a section of the bundle 
  $\cT:=\Lambda^{1,0}\otimes \Lambda^{0,1}\otimes \Lambda^{1,0}\otimes  \Lambda^{0,1}$.  Let $\nabla^{\cT}$ be the lift of $\nabla$ to $\cT$, so that with respect to 
  the local  frame $\{\theta^{j_{1}}\otimes\theta^{\bar{j}_{2}}\otimes \theta^{j_{3}}\otimes \theta^{\bar{j}_{4}}\}$ of $\cT$ we have 
  \begin{equation}
      \nabla^{\cT}=d+\omega^{j_{1}}_{~k_{1}} \otimes 1\otimes 1\otimes 1 + 1\otimes  
      \omega^{\bar{j}_{2}}_{~\bar{k}_{2}} \otimes 1\otimes 1 +\ldots \,.
  \end{equation}
 
  For $j=1,..,n$ set $Z_{j}=X_{j}-iX_{n+j}$ where $X_{j}=X_{j}^{~k}\frac{\partial}{\partial x^{k}}$ and $X_{n+j}=X_{n+j}^{~k}\frac{\partial}{\partial 
  x^{k}}$  are
  real-valued vector fields. An induction then shows that, for any ordered subset 
  $I=(i_{1},\ldots,i_{N})\subset \{0,\ldots,2n\}$, we have 
  \begin{equation}
      \nabla_{X_{i_{1}}}^{\cT}\cdots \nabla_{X_{i_{m}}}^{\cT}=X_{i_{1}}^{~j_{1}}\cdots X_{i_{N}}^{~j_{N}} \partial_{x^{j_{1}}} \cdots \partial_{x^{j_{N}}} +
      \sum_{|\alpha|\leq N-1}a_{I,\alpha}\partial_{x}^{\alpha},
  \end{equation}
  where the components of $a_{I,\alpha}=(a^{j_{1}~\bar{j}_{2}~j_{3}~\bar{j}_{4}}_{~k_{1}~\bar{k}_{2}~k_{3}~\bar{k}_{4}})$ with respect to the frame 
  $\{\theta^{j_{1}}\otimes\theta^{\bar{j}_{2}}\otimes \theta^{j_{3}}\otimes \theta^{\bar{j}_{4}}\}$ are universal polynomials in the partial derivatives of order~$\leq 
  N-1$ of the components $X_{i_{l}}^{~j_{l}}$ and $\omega_{j\bar{k}}(X_{i_{l}})$. 
  
  We know that at $x=0$ the partial derivatives of order~$\leq 
  N-1$ of the components $X_{i_{l}}^{~j_{l}}$ and $\omega_{j\bar{k}}(X_{i_{l}})$ are universal polynomials in partial derivatives of order~$\leq N-1$ 
  of the curvature and torsion components $R_{j\bar{k}l\bar{m}}$ and $A_{jk}$. 
  Moreover~(\ref{eq:XZw.normal-coordinates}) implies that
  $X_{i_{1}}^{~j_{1}}\cdots X_{i_{N}}^{~j_{N}} \partial_{x^{j_{1}}} \cdots \partial_{x^{j_{N}}}(0)=\partial_{x^{i_{1}}} \cdots \partial_{x^{i_{N}}}$. 
  Therefore, for any multi-order $\alpha$ in $ \N_{0}^{2n+1}$ such that $|\alpha|=N$, we have 
  \begin{equation}
     \partial_{x}^{\alpha} R_{j\bar{k}l\bar{m}}(0)= \left((\nabla_{X_{0}}^{\cT})^{\alpha_{0}}\cdots  
     (\nabla_{X_{2n}}^{\cT})^{\alpha_{2n}}R\right)_{j\bar{k}l\bar{m}}(0)+
       P_{\alpha}(R,\tau),
       \label{eq:PsH.partial-R}
  \end{equation}
  where $P_{\alpha}(R,\tau)$ is a universal polynomial in the partial derivatives at $x=0$ of order~$\leq N-1$ of the components of the pseudohermitian 
  cuvarture tensor and that of the torsion tensor.
  
  
  On the other hand, as $Z_{1},\ldots,Z_{n}$ is an orthonormal frame we have
  \begin{equation}
      \theta([Z_{j},Z_{\bar{k}}])=-id\theta(Z_{j},Z_{\bar{k}})=-i\delta_{j\bar{k}}.
       \label{eq:PsH.bracket-1}
  \end{equation}
  Furthermore, from~(\ref{eq:CR.TW-connection}) we get 
  \begin{equation}
       \theta^{l}([Z_{j},Z_{\bar{k}}])=-d\theta^{l}(Z_{j},Z_{\bar{k}})=-\omega_{j}^{~l}(Z_{\bar{k}}).
       \label{eq:PsH.bracket-2}
  \end{equation}
  Together~(\ref{eq:PsH.bracket-1}) and~(\ref{eq:PsH.bracket-2}) show that 
  \begin{equation}
      [Z_{j},Z_{\bar{k}}]=-i\delta_{j\bar{k}}X_{0}-\omega_{j}^{~l}(Z_{\bar{k}})Z_{l}+\omega_{~\bar{k}}^{\bar{l}}(Z_{j})Z_{\bar{l}}.
  \end{equation}
  Combining this with the fact that $[Z_{j},Z_{\bar{j}}]=2i[X_{j},X_{n+j}]$ we deduce that
  \begin{equation}
      X_{0}=\frac{1}{n}\sum_{j=1}^{n}\left\{ 2[X_{j},X_{n+j}]+i\omega_{j}^{~k}(Z_{\bar{j}})Z_{k}-i\omega_{~\bar{j}}^{\bar{k}}(Z_{j})Z_{\bar{k}}\right\}.
  \end{equation}
  Thus, 
  \begin{equation}
      \nabla^{\cT}_{X_{0}}=\frac{1}{n}\sum_{j=1}^{n}\left\{2 \nabla^{\cT}_{[X_{j},X_{n+j}]}+ 
      i\omega_{j}^{~k}(Z_{\bar{j}}) \nabla^{\cT}_{Z_{k}}-i\omega_{~\bar{j}}^{\bar{k}}(Z_{j}) \nabla^{\cT}_{Z_{\bar{k}}}\right\}. 
       \label{eq:PsH.nablaX0a}
  \end{equation}
  
  Let $R^{\cT}$ be the pseudohermitian curvature of $\cT$. Its components with respect to the orthonormal
  frame $\{\theta^{j_{1}}\otimes\theta^{\bar{j}_{2}}\otimes \theta^{j_{3}}\otimes \theta^{\bar{j}_{4}}\}$ are
  \begin{equation}
      R^{j_{1}~\bar{j}_{2}~j_{3}~\bar{j}_{4}\mbox{~}\mbox{~}l\bar{m}}_{~k_{1}~\bar{k}_{2}~k_{3}~\bar{k}_{4}}= 
      -\overline{R_{~\bar{k}_{1}m\bar{l}}^{\bar{j}_{1}}}\otimes 1 \otimes 1\otimes 1 
      + 1\otimes R_{~\bar{k}_{2}l\bar{m}}^{\bar{j}_{2}}\otimes 1 \otimes 1+ \ldots \, .
  \end{equation}
   As $R^{\cT}(X_{j},X_{n+j})= [\nabla^{\cT}_{X_{j}},\nabla^{\cT}_{X_{n+j}}] 
  -\nabla^{\cT}_{[X_{j},X_{n+j}]}$ it follows from~(\ref{eq:PsH.nablaX0a}) that $\nabla^{\cT}_{X_{0}}$ is equal to
  \begin{equation}
       \frac{1}{n}\sum_{j=1}^{n}\left\{2[\nabla^{\cT}_{X_{j}},\nabla^{\cT}_{X_{n+j}}]  + 
      i\omega_{j}^{~k}(Z_{\bar{j}}) \nabla^{\cT}_{Z_{k}}-i\omega_{~\bar{j}}^{\bar{k}}(Z_{j}) \nabla^{\cT}_{Z_{\bar{k}}} - 2R^{\cT}(X_{j},X_{n+j})\right\}. 
  \end{equation}
  By combining this with~(\ref{eq:PsH.partial-R}) we then can show that, for any multi-order $\alpha$ in $\N_{0}^{2n+1}$ such that $|\alpha|=N$, we 
  have
  \begin{equation}
  \partial_{x}^{\alpha} R_{j\bar{k}l\bar{m}}(0)=  \left(\left(\frac{2}{n}\sum_{j=1}^{n}[\nabla^{\cT}_{X_{j}},\nabla^{\cT}_{X_{n+j}}]\right)^{\alpha_{0}} \!\!\!
      (\nabla_{X_{1}}^{\cT})^{\alpha_{1}}\cdots  
     (\nabla_{X_{2n}}^{\cT})^{\alpha_{2n}}R\right)_{\!\!\!j\bar{k}l\bar{m}}\!\!\!\!\!\!\!\!(0)+ P_{\alpha}(R,\tau),
       \label{eq:PsH.partial-R2}
  \end{equation}
  where $P_{\alpha}(R,\tau)$ is a universal polynomial in the partial derivatives at $x=0$ of order~$\leq N-1$ of the components of the components of the pseudohermitian 
  curvature tensor and that of the torsion tensor. 
  
  The tensor $A_{jk}$ is a section of the bundle $\cT':=\Lambda^{1,0}\otimes \Lambda^{1,0}$. If we let $\nabla^{\cT'}$ denote the lift 
  to $\cT'$ of the Tanaka-Wesbter connection then, in the same way as above, we can show that, for any multi-order $\alpha \in \N_{0}^{2n+1}$ such that $|\alpha|=N$, 
  we have
  \begin{equation}
      \partial_{x}^{\alpha} A_{jk}(0)= \left(\left(\frac{2}{n}\sum_{j=1}^{n}[\nabla^{\cT'}_{X_{j}},\nabla^{\cT'}_{X_{n+j}}]\right)^{\alpha_{0}} \!\!\!
      (\nabla_{X_{1}}^{\cT'})^{\alpha_{1}}\cdots  
     (\nabla_{X_{2n}}^{\cT'})^{\alpha_{2n}}A\right)_{\!\!\!jk}\!\!\!\!(0)+ Q_{\alpha}(R,\tau),
       \label{eq:PsH.partial-A}
  \end{equation}
  where $Q_{\alpha}(R,\tau)$ is a universal polynomial in the partial derivatives at $x=0$ of order~$\leq N-1$ of the components of the pseudohermitian 
  curvature tensor and that of the torsion tensor. By combining~(\ref{eq:PsH.partial-R2}) and~(\ref{eq:PsH.partial-A}) and by arguing by induction we 
  then can see
  that the value at $x=0$ of any partial derivative of order~$N$  of these components agrees with the value of an universal polynomial in their \emph{covariant 
  derivatives} of order~$\leq N$ with respect to the vector fields $X_{1},\ldots,X_{2n}$.
  
  It follows from all this that $\cI_{\theta}(0)$ agrees with value at $x=0$ of a universal polynomial in covariant derivatives with respect to the vector fields 
  $Z_{1},\ldots,Z_{n}, Z_{\bar{1}}, \ldots, Z_{\bar{n}}$ of the components of the pseudohermitian 
  curvature tensor and that of the torsion tensor. 
  We then can make use of $U(n)$-invariant theory as in~\cite[pp.~380--382]{BGS:HECRM} to deduce that $\cI_{\theta}(x)$ is a linear combination 
  of complete tensorial contractions of covariant derivatives of these tensors, 
  i.e.,~$\cI_{\theta}(x)$ satisfies (iii). This proves that (ii) implies (iii). The proof is thus achieved. 
\end{proof}

\subsection{Pseudohermitian invariants $\mathbf{\Psi_{H}}$DOs}
We define homogeneous symbols on $\Omega\times\Rtn$ as follows. 

\begin{definition}
    $S_{m}(\Omega\times \R^{2n+1})$, $m\in \C$, consists of be functions $a(h,X_{0},Z,\xi)$ in 
$C^{\infty}(\Omega\times (\R^{2n+1}\setminus 0))$ such that  $a(\theta,Z,t\xi)=t^{m}a(\theta,Z,\xi)$ $\forall t>0$. 
\end{definition}

In addition, recall that if $Z_{1},\ldots,Z_{n}$ is a local frame of $T_{1,0}$ then its associated $H$-frame is the frame $X_{0},\ldots,X_{2n}$ of $TM$ 
such that $Z_{j}=X_{j}-iX_{n+j}$ for $j=1,\ldots,n$. 

\begin{definition}
A pseudohermitian invariant \psivdo\ of order $m$ and weight $w$ is the datum on each pseudohermitian manifold  $(M^{2n+1},\theta)$ of an operator 
$P_{\theta}$ in $\pvdo^{m}(M)$ such that:\smallskip

(i) For $j=0,1,\ldots$ there exists a finite family $(a_{j\fp})_{\fp \in \cP}\subset S_{m-j}(\Omega\times 
\R^{2n+1})$ such that, in any local coordinates equipped with the $H$-frame associated to a frame $Z_{1},\ldots,Z_{n}$ of $T_{1,0}$, the operator 
$P_{\theta}$ has symbol $p_{\theta}\sim \sum p_{\theta,m-j}$ with 
\begin{equation}
    p_{\theta,m-j}(x,\xi)= \sum_{\fp \in \cP} \fp(X_{0},Z,\overline{Z})(x) a_{j\fp}(h(x),X_{0}(x),Z(x),\mu(x),\xi).
     \label{eq:CR.CR-invariant-symbol}
\end{equation}

(ii) For any $t>0$ we have $P_{t\theta}=t^{-w}P_{\theta}$ modulo $\Psi^{-\infty}(M)$.\smallskip

\noindent In addition, we will say that $P_{\theta}$ is \emph{admissible} if in~(\ref{eq:CR.CR-invariant-symbol})  
we can take $a_{0\fp}(h,X_{0},Z,\mu,\xi)$ to be zero for $\fp\neq 1$.
\end{definition}

Before proving the analogues in pseudohermitian geometry of Proposition~\ref{prop:GJMS.Riemannian-invariance-properties1} 
and Proposition~\ref{prop:GJMS.Riemannian-invariance-properties2}
we need to recall some results about the symbolic calculus for \psivdos.

Given a matrix $b=(b_{jk})\in M_{2n}(\R)$ we can endow $\R^{2n+1}$ with a structure of 2-step nilpotent Lie group by means of the product,
\begin{equation}
    x.y=(x^{0}+y^{0}+b_{kj}x^{j}y^{k},x^{1}+y^{1},\ldots,x^{2n}+y^{2n}).
    \label{eq:CR.product-Rtn}
\end{equation}
It follows from~\cite{BG:CHM} that 
under the inverse Fourier transform the convolution for distributions with respect to this group gives rise to a product for homogeneous 
symbols, 
\begin{equation}
    *^{b}:S_{m_{1}}(\Rtn)\times S_{m_{2}}(\Rtn)\longrightarrow S_{m_{1}+m_{2}}(\Rtn).
     \label{eq:CR.product-Homogeneous-symbols-Rtn}
\end{equation}
Furthermore, this product depends smoothly on $b$.

Let $U\subset \R^{2n+1}$ be an open of local coordinates equipped with a $H$-frame $X_{0},\ldots,X_{d}$. We let 
$\eta^{0},\ldots,\eta^{2n}$ be the dual coframe and we set $X_{j}=X_{j}^{~k}\partial_{x^{k}}$ and $\eta^{j}=\eta_{j}^{~k}dx^{k}$. 

For any $a\in U$ we let $\psi_{a}$ be the affine change of variables to the privileged coordinates centered $a$, 
and we let $X_{0}^{(a)},\ldots,X_{d}^{(a)}$ be the model vector fields as defined 
in~(\ref{eq:CR.model-vector-fields}), that is, we have $X_{0}^{(a)}=\partial_{x^{0}}$ and $X_{j}^{(a)}=\partial_{x^{j}}+b_{jk}(a)x^{k}\partial_{x^{0}}$ where 
$b_{jk}(a):=L_{jk}(a)+\mu_{jk}(a)$. As alluded to before the linear span of the vector fields $X_{0}^{(a)},\ldots,X_{d}^{(a)}$ is a 2-step nilpotent 
Lie algebra whose corresponding Lie group is isomorphic to the tangent group $G_{a}M$ and can be realized as $\R^{2n}$ equipped with the group 
law~(\ref{eq:CR.product-Rtn}) with $b_{jk}=b_{jk}(a)$. 
Since the product~(\ref{eq:CR.product-Homogeneous-symbols-Rtn}) 
for homogenous symbols on $\R^{2n}$ depends smoothly on $b$ and $b(a):=(b_{jk}(a))$ depends smoothly on $a$, 
we get the following product for homogeneous symbols on $\URtn$, 
\begin{gather}
    *:S_{m_{1}}(\URtn)\times S_{m_{2}}(\URtn) \longrightarrow S_{m_{1}+m_{2}}(\URtn),\\
     p_{m_{1}}*p_{m_{2}}(a,\xi):=[p_{m_{1}}(a,.)*^{b(a)}p_{m_{2}}(a,.)](\xi) \qquad \forall p_{j} \in S_{m_{j}}(\URtn).
     \label{eq:CR.product-symbols-URn}
 \end{gather}

 We also can define a product for homogeneous symbols on $\Omega\times \Rtn$ as follows. For any $(h,\mu)$ in 
 $M_{n}(\C)_{+}\times S_{2n}(\R)$ we let
 \begin{equation}
     b(h,\mu):=\frac{1}{2}L+\mu, \qquad L=  \frac{1}{2}\left(
    \begin{array}{cc}
        \Im h & -\Re h  \\
        \Re h & \Im h
    \end{array}\right). 
      \label{eq:CR.b(h,mu)}
 \end{equation}
 As $b(h,\mu)$ depends smoothly on $(h,\mu)$ we obtain the bilinear product, 
 \begin{equation}
    *:S_{m_{1}}(\Omega\times \Rtn)\times S_{m_{2}}(\Omega\times \Rtn) \longrightarrow S_{m_{1}+m_{2}}(\Omega\times\Rtn),
 \end{equation}
 such that, for any symbols $p_{1} \in S_{m_{1}}(\Omega\times\Rtn)$ and $p_{2} \in S_{m_{2}}(\Omega\times\Rtn)$, on $\Omega\times \Rtn$ we have
 \begin{equation}
     p_{m_{1}}*p_{m_{2}}(h,X_{0},Z,\mu,\xi)=  [p_{m_{1}}(h,X_{0},Z,\mu,.)*^{b(h,\mu)}p_{m_{2}}(h,X_{0},Z,\mu,.)](\xi). 
    \label{eq:CR.product-symbols-OmegaRtn}
 \end{equation}

 Observe that it follows from~(\ref{eq:CR.L-h}) and from the very definition of $\mu(a)$ that we have $b(x)=\frac{1}{2}L(x)+\mu(x)=b(h(x),\mu(x))$. Therefore, 
 we see that, for any symbols $p_{1} \in S_{m_{1}}(\Omega\times\Rtn)$ and $p_{2} \in S_{m_{2}}(\Omega\times\Rtn)$,
  on $\URtn$ we have  
\begin{multline}
    [p_{m_{1}}(h(x),X_{0}(x),Z(x),\mu(x),\xi)]*[p_{m_{2}}(h(x),X_{0}(x),Z(x),\mu(x),\xi)] \\ = 
    (p_{m_{1}}*p_{m_{2}})(h(x),X_{0}(x),Z(x),\mu(x),\xi),
     \label{eq:CR.comptatibility-products-symbols}
\end{multline}
 where the product $*$ on the l.h.s.~is that for homogenous symbols on $\URtn$ and the other product $*$ is that for 
 homogeneous symbols on $\Omega\times\Rtn$. 
 
Next, let $\sigma_{j}(x,\xi)=X_{j}^{~k}(x)\xi_{k}$ be the classical symbol of $\frac{1}{i}X_{j}$. Then the symbol of $\psi_{a*}X_{j}$ is 
$\psi_{a*}\sigma_{j}(x,\xi):=X_{j}^{~k}(\psi_{a}(x))\eta^{l}_{~k}(a)\xi_{l}$. We set $\sigma(x,\xi)=(\sigma_{0}(x,\xi),\ldots,\sigma_{2n}(x,\xi))$. 
Similarly, we let $\sigma_{j}^{(a)}(x,\xi)$ be the classical symbol of $\frac{1}{i}X_{j}^{(a)}$ and we set 
$\sigma^{(a)}(x,\xi)=(\sigma_{0}(x,\xi),\ldots,\sigma_{2n}(x,\xi))$. Notice that $\sigma_{0}^{(a)}(x,\xi)=\xi_{0}$, while for $j=1,\ldots,2n$ we 
have $\sigma_{j}^{(a)}(x,\xi)=\xi_{j}+ b_{jk}(a)x^{k}\xi_{0}$. For any multi-order $\beta\in \N_{0}^{2n}$ we can write 
\begin{equation}
    [\psi_{a*}\sigma(x,\xi)-\sigma^{(a)}(x,\xi)]^{\beta}=\sum_{|\gamma|=|\beta|}e_{\beta\gamma}(a,x)\sigma^{(a)}(x,\xi)^{\gamma},
\end{equation}
where the coefficients $e_{\beta\gamma}(a,x)$ are smooth functions on $\URtn$. We then let 
$h_{\alpha\beta\gamma\delta}(a)$ be the smooth function on $U$ given by
\begin{equation}
    h_{\alpha\beta\gamma\delta}(a) =\frac{1}{\alpha!\beta!\delta!}\partial_{x}^{\delta}[\psi^{`-1}_{a}(x)^{\alpha}e_{\beta\gamma}(a,x)]_{|x=0}.
\end{equation}

\begin{proposition}[{\cite[Thm.~14.7]{BG:CHM}}]\label{prop:CR.product-psivdos}
Let $P\in \pvdo^{m}(U)$ have symbol $p\sim \sum p_{m-j}$, let $Q\in \pvdo^{m'}(U)$ have symbol $q\sim \sum q_{m'-j}$, and assume that 
$P$ or $Q$ is properly supported. Then $PQ$ belongs to $\pvdo^{m+m'}(U)$ and has symbol $r\sim \sum r_{m+m'-j}$ with
\begin{equation}
    r_{m+m'-j}={\sum}_{(j)} h_{\alpha\beta\gamma\delta}(D_{\xi}^{\delta}p_{m-k})*(\xi^{\gamma}\partial_{x}^{\alpha}D_{\xi}^{\beta}q_{m'-l}),
   \label{eq:CR.symbol-product}
\end{equation}
where ${\sum}_{(j)}$ denotes the summation over all indices such that $|\gamma|=|\beta|$ and 
$|\beta|+|\alpha|\leq \brak\delta +\brak\beta-\brak\gamma=j-k-l$. 
\end{proposition}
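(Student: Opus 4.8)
The plan is to follow Beals--Greiner~\cite{BG:CHM}. Since $P$ or $Q$ is properly supported, $PQ$ is a well-defined continuous operator from $C^\infty_c(U)$ to $C^\infty(U)$, and, as composing any operator with a smoothing operator is smoothing, modulo $\pvdo^{-\infty}(U)$ one may take $P=p(x,-iX)$ and $Q=q(x,-iX)$. Using the characterization of \psivdos\ by their Schwartz kernels (cf.~Proposition~\ref{prop:Heisenberg.Log-sing} and~\cite{BG:CHM}), I would represent the kernel of $p(x,-iX)$ near the diagonal as $\sum_j K_{P,j}(x,-\psi_x(y))$ plus a logarithmic term, with $K_{P,j}(x,\cdot)$ a homogeneous distribution of the appropriate anisotropic degree determined by $p_{m-j}(x,\cdot)$, and then carry out the analysis in the privileged coordinates $\psi_a$ centered at each $a\in U$; there the operators are perturbations of left-invariant operators on the osculating group $G^{(a)}\simeq G_aM$, realized as $\Rtn$ with the product~(\ref{eq:CR.product-Rtn}) for $b_{jk}=b_{jk}(a)$.

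\emph{Leading term.} To top order, $p(x,-iX)$ in the coordinates $\psi_a$ is the left-invariant operator on $G^{(a)}$ with Heisenberg symbol $p_m(a,\cdot)$, i.e.\ convolution on $G^{(a)}$ by the inverse Fourier transform of $p_m(a,\cdot)$. Composition of left-invariant operators on a nilpotent Lie group is convolution of kernels, which under the linear Fourier transform is exactly the product $*^{b(a)}$; hence the principal symbol of $PQ$ at $a$ is $p_m(a,\cdot)*^{b(a)}q_{m'}(a,\cdot)=(p_m*q_{m'})(a,\cdot)$. This is the $j=0$ instance of~(\ref{eq:CR.symbol-product}): only $\alpha=\beta=\gamma=\delta=0$ and $k=l=0$ survive, with $h_{0000}\equiv 1$.

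\emph{Lower-order terms and closure.} The discrepancy between $p(x,-iX)$ and its $G^{(a)}$-model has two sources: the $x$-dependence of the $p_{m-j}(x,\xi)$, and the fact that $\psi_{a*}\sigma_j(x,\xi)\neq\sigma_j^{(a)}(x,\xi)$, the difference being nonetheless linear in $\xi$. I would Taylor-expand the former about $x=0$, producing the derivatives $\partial_x^\alpha$ and the monomials $\psi_a^{-1}(x)^\alpha$, and expand $\big(\psi_{a*}\sigma(x,\xi)-\sigma^{(a)}(x,\xi)\big)^\beta$ against the monomials $\sigma^{(a)}(x,\xi)^\gamma$ --- which, because the difference is $\xi$-linear, involves only $|\gamma|=|\beta|$ --- producing the coefficients $e_{\beta\gamma}(a,x)$. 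Combining the two expansions, collecting their coefficients into $h_{\alpha\beta\gamma\delta}(a)$, and running the convolution step of the leading case term by term yields the full sum in~(\ref{eq:CR.symbol-product}); the summation range is forced by anisotropic homogeneity, since $D_\xi^\delta$ lowers the degree by $\brak\delta$, the factor $\xi^\gamma$ raises it by $\brak\gamma$, $D_\xi^\beta$ lowers it by $\brak\beta$, $\partial_x^\alpha$ leaves it unchanged, and $*$ adds degrees, so a term built from $p_{m-k}$ and $q_{m'-l}$ contributes to $r_{m+m'-j}$ exactly when $\brak\delta+\brak\beta-\brak\gamma=j-k-l$, with $|\alpha|+|\beta|\le j-k-l$ recording the truncation order of the Taylor expansions. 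Finally, asymptotic completeness of the classes $S^\bullet(\URtn)$ makes $\sum_j r_{m+m'-j}$ a genuine symbol $r\in S^{m+m'}(\URtn)$, and standard remainder estimates together with the mapping properties in~\cite{BG:CHM} show that $PQ-r(x,-iX)$ is smoothing, whence $PQ\in\pvdo^{m+m'}(U)$ with symbol $r$.

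\emph{Main obstacle.} The heart of the matter is the second and third steps together: identifying the composition of the osculating model operators with the convolution product $*^{b(a)}$ on $G_aM$, and then reconstructing the precise coefficients $h_{\alpha\beta\gamma\delta}$ and the summation range of $\sum_{(j)}$ from the two interlocking Taylor expansions. The anisotropic degree bookkeeping is where all the care lies, and since the Heisenberg symbol product is only local --- not microlocal --- one must genuinely argue through the kernels and the group convolution rather than by multiplying full symbols pointwise. An alternative route is to substitute the classical symbols $p(x,\sigma(x,\xi))$ and $q(x,\sigma(x,\xi))$ into the usual composition formula for classical \psidos\ and reorganize the output into Heisenberg-homogeneous pieces; it is more calculational but leads to the same formula.
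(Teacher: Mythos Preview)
The paper does not give a proof of this proposition: it is stated with the attribution \cite[Thm.~14.7]{BG:CHM} and used as a black box, so there is nothing to compare your argument against. Your sketch is a reasonable outline of the Beals--Greiner argument---working in privileged coordinates centered at each point, identifying the leading term with left convolution on the osculating group $G^{(a)}$, and then handling the lower-order discrepancies via the two Taylor expansions that produce the coefficients $h_{\alpha\beta\gamma\delta}$---and the degree bookkeeping you describe is the correct mechanism for the summation range in $\sum_{(j)}$.
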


Bearing all this in mind we are now ready to prove: 

\begin{proposition}\label{prop:CR.pseudohermitian-invariant-PsiHDOs1}
     Let $P_{\theta}$ be a pseudohermitian invariant \psivdo\ of order $m$ and weight~$w$, let $Q_{\theta}$ be a pseudohermitian invariant \psivdo\ 
     of order $m'$ and weight $w'$, and assume that $P_{\theta}$ or $Q_{\theta}$ is 
    uniformly properly supported. Then:\smallskip
    
    1) $P_{\theta}Q_{\theta}$ is a pseudohermitian invariant \psivdo\ of order $m+m'$ and weight $w+w'$.\smallskip
    
    2) If $P_{\theta}$ and $Q_{\theta}$ are admissible, then $P_{\theta}Q_{\theta}$ is admissible as well.
 \end{proposition}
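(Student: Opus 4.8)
\textbf{Proof plan for Proposition~\ref{prop:CR.pseudohermitian-invariant-PsiHDOs1}.}

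The plan is to mimic the proof of Proposition~\ref{prop:GJMS.Riemannian-invariance-properties1}, replacing the classical symbol product by the Heisenberg symbol product of Proposition~\ref{prop:CR.product-psivdos} and the structure data $(g,\partial^\alpha g)$ by the pseudohermitian data $(h,X_0,Z,\mu)$ together with the monomials $\fp\in\cP$. First I would record the two easy facts: $P_\theta Q_\theta\in\pvdo^{m+m'}(M)$ (composition of \psivdos\ with one factor properly supported), and for any $t>0$ we have $P_{t\theta}Q_{t\theta}=t^{-(w+w')}P_\theta Q_\theta$ modulo $\Psi^{-\infty}(M)$, since $P_{t\theta}=t^{-w}P_\theta$ and $Q_{t\theta}=t^{-w'}Q_\theta$ modulo smoothing operators and composition preserves this. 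So condition (ii) of the definition of pseudohermitian invariant \psivdo\ holds with weight $w+w'$.

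The main work is condition (i). Fix local coordinates with the $H$-frame associated to a frame $Z_1,\ldots,Z_n$ of $T_{1,0}$. Write $p_\theta\sim\sum p_{\theta,m-j}$ and $q_\theta\sim\sum q_{\theta,m'-j}$ with the expansions~(\ref{eq:CR.CR-invariant-symbol}). By Proposition~\ref{prop:CR.product-psivdos} the symbol $r_\theta\sim\sum r_{\theta,m+m'-j}$ of $P_\theta Q_\theta$ is given by~(\ref{eq:CR.symbol-product}). The key point is that each ingredient in that formula is, at the point $x$, a universal expression in the pseudohermitian data. Indeed: the homogeneous symbols $p_{\theta,m-k}$ and $q_{\theta,m'-l}$ are by hypothesis of the form~(\ref{eq:CR.CR-invariant-symbol}); the coefficients $h_{\alpha\beta\gamma\delta}(x)$ are built out of the $\psi_x$-transforms of the $\sigma_j$'s and the model symbols $\sigma_j^{(a)}$, hence are universal polynomials in partial derivatives of the components $X_j^{~k}$ and $\eta^l_{~k}$ evaluated at $x$, which in turn are universal expressions in $\fp(X_0,Z,\overline Z)(x)$ and in $(h(x),X_0(x),Z(x),\mu(x))$ (recall $b_{jk}(x)=L_{jk}(x)+\mu_{jk}(x)$, and $L_{jk}$ is read off from $h$ via~(\ref{eq:CR.L-h})); finally the operations $D_\xi^\delta$, $\xi^\gamma\partial_x^\alpha D_\xi^\beta$ and the Heisenberg product $*$ all stay within this class — the crucial compatibility being~(\ref{eq:CR.comptatibility-products-symbols}), which says the pointwise product $*$ of symbols of the form~(\ref{eq:CR.CR-invariant-symbol}) is again the evaluation of a product of symbols on $\Omega\times\Rtn$. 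Assembling these through~(\ref{eq:CR.symbol-product}) and collecting terms, one gets that $r_{\theta,m+m'-j}$ has the form~(\ref{eq:CR.CR-invariant-symbol}) for a finite family of symbols in $S_{m+m'-j}(\Omega\times\Rtn)$ depending only on the $a_{j\fp}$'s of $P_\theta$ and $Q_\theta$, and not on the chosen coordinates or frame. This proves part~1).

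For part~2), suppose $P_\theta$ and $Q_\theta$ are admissible, so $p_{\theta,m}(x,\xi)=a_{0}(h(x),X_0(x),Z(x),\mu(x),\xi)$ and similarly $q_{\theta,m'}(x,\xi)=\tilde a_{0}(h(x),X_0(x),Z(x),\mu(x),\xi)$, with no $\fp\neq 1$ term. The leading term $r_{\theta,m+m'}$ comes only from the $j=k=l=0$, $\alpha=\beta=\gamma=\delta=0$ piece of~(\ref{eq:CR.symbol-product}), namely $r_{\theta,m+m'}=p_{\theta,m}*q_{\theta,m'}$, and by~(\ref{eq:CR.comptatibility-products-symbols}) this equals $(a_0*\tilde a_0)(h(x),X_0(x),Z(x),\mu(x),\xi)$, where the product on $\Omega\times\Rtn$ uses $b(h,\mu)$ as in~(\ref{eq:CR.product-symbols-OmegaRtn}); this is again of the admissible form with $\fp=1$. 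Hence $P_\theta Q_\theta$ is admissible.

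\textbf{Main obstacle.} The only genuinely delicate point is the bookkeeping in the previous paragraph: verifying that every factor entering~(\ref{eq:CR.symbol-product}) — and in particular the coefficients $h_{\alpha\beta\gamma\delta}(x)$ and the effect of $\xi^\gamma\partial_x^\alpha D_\xi^\beta$ on a symbol of the form~(\ref{eq:CR.CR-invariant-symbol}) — produces a symbol that can again be written in the universal form~(\ref{eq:CR.CR-invariant-symbol}) with coefficients drawn from $C^\infty(\Omega)$ depending only on $P_\theta$ and $Q_\theta$. Here one must be careful that differentiating the $\fp(X_0,Z,\overline Z)(x)$ factors in $x$ introduces only further monomials in $\cP$ (since $\cP$ is closed under taking $\partial_x$ of its generators) while differentiating the $a_{j\fp}(h(x),\ldots)$ factors produces, via the chain rule, derivatives of the entries of $h,X_0,Z,\mu$ — again elements of $\cP$ — times smooth functions of $(h,X_0,Z,\mu)$ on $\Omega$; and that the passage $b_{jk}(x)\leftrightarrow(h(x),\mu(x))$ via~(\ref{eq:CR.b(h,mu)}) lets one absorb the frame-dependence of the Heisenberg product into the $\Omega$-variables, by~(\ref{eq:CR.comptatibility-products-symbols}). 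This is exactly parallel to the ``it is not difficult to check'' step in the proof of Proposition~\ref{prop:GJMS.Riemannian-invariance-properties1}, only with a heavier notational apparatus.
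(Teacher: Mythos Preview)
Your proposal is correct and follows essentially the same approach as the paper's own proof. The paper carries out exactly the bookkeeping you flag as the ``main obstacle'': it records explicitly (i) a Leibniz-type formula showing that $\partial_x^\alpha[\fq(X_0,Z,\overline Z)(x)\,b(h(x),X_0(x),Z(x),\mu(x),\xi)]$ is again a finite sum $\sum_{\tilde\fq}\tilde\fq(X_0,Z,\overline Z)(x)\,C_{\alpha\tilde\fq}(\fq,b)(h(x),\ldots,\xi)$ with universal $C_{\alpha\tilde\fq}\in S_m(\Omega\times\Rtn)$, and (ii) that the coefficients $h_{\alpha\beta\gamma\delta}(x)$ themselves admit an expansion $\sum_{\fr}h_{\alpha\beta\gamma\delta\fr}(h(x),X_0(x),Z(x),\mu(x))\,\fr(X_0,Z,\overline Z)(x)$ with universal $h_{\alpha\beta\gamma\delta\fr}\in C^\infty(\Omega)$; then it combines these with~(\ref{eq:CR.comptatibility-products-symbols}) to produce an explicit closed formula for the universal family $(c_{j\fs})_{\fs\in\cP}$ of the product symbol --- precisely the verification you outline.
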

\begin{proof}
    Since $P_{\theta}$ and $Q_{\theta}$ are pseudohermitian invariant \psivdos, for $j=0,1,\ldots$ there exist finite families $(a_{j\fp})_{\fp \in \cP}\subset 
    S_{m-j}(\Omega\times \Rtn)$ and $(b_{j\fq})_{\fq \in \cP}\subset S_{m'-j}(\Omega\times \Rtn)$ such that, in any given local coordinates equipped 
    with the $H$-frame associated to a frame $Z_{1},\ldots,Z_{n}$ of $T_{1,0}$, the respective symbols of $P_{\theta}$ and $Q_{\theta}$ 
    are $p\sim \sum p_{\theta,m-j}$ and $q\sim \sum q_{\theta,m'-j}$ with
    \begin{gather}
         p_{\theta,m-j}(x,\xi)= \sum_{\fp \in \cP} \fp(X_{0},Z,\overline{Z})(x) a_{j\fp}(h(x),X_{0}(x),Z(x),\mu(x),\xi),\\
         q_{\theta,m'-j}(x,\xi)= \sum_{\fq \in \cP}\fq(X_{0},Z,\overline{Z})(x) b_{j\fq}(h(x),X_{0}(x),Z(x),\mu(x),\xi) .
    \end{gather}
    Therefore, by Proposition~\ref{prop:CR.product-psivdos} the operator $P_{\theta}Q_{\theta}$ has symbol $r\sim \sum r_{m+m'-j}$, where $r_{m+m'-j}(x,\xi)$ is equal to
\begin{multline}
   \sum_{\fp, \fq \in \cP}{\sum}_{(j)} h_{\alpha\beta\gamma\delta}(x) \fp(X_{0},Z,\overline{Z})(x) 
    [D_{\xi}^{\delta}a_{k\fp}(h(x),X_{0}(x),Z(x),\mu(x),\xi)]* \\ [\xi^{\gamma}D_{\xi}^{\beta}
    \partial_{x}^{\alpha}(\fq(X_{0},Z,\overline{Z})(x) b_{l\fq}(h(x),X_{0}(x),Z(x),\mu(x),\xi))].
     \label{eq:CR.symbol-product-invariant0}
\end{multline}
    
Notice that,  given a multi-order $\alpha \in \N_{0}^{2n+1}$, for any monomial $\fq \in \cP$ and any symbol $b\in S_{m}(\Omega\times\Rtn)$ there exists 
a universal finite family $(C_{\alpha \tilde{\fq}}(\fq, b))_{\tilde{\fq} \in \cP}$ contained in $S_{m}(\Omega\times\Rtn)$ such that 
\begin{multline}
    \partial_{x}^{\alpha}[\fq(X_{0},Z,\overline{Z})(x) b(h(x),X_{0}(x),Z(x),\mu(x),\xi)] =  \\
    \sum_{\tilde{\fq} \in \cP}\tilde{\fq}(X_{0},Z,\overline{Z})(x) C_{\alpha \tilde{\fq}}(\fq, b)(h(x),X_{0}(x),Z(x),\mu(x),\xi). 
     \label{eq:CR.Leibniz-formula}
\end{multline}
In addition, it follows from the very definition of the function $h_{\alpha\beta\gamma\delta}(x)$ that there exists a universal finite family 
$(h_{\alpha \beta \gamma \delta \fr})_{\fr \in \cP}\subset C^{\infty}(\Omega)$ such that 
\begin{equation}
    h_{\alpha \beta \gamma \delta}(x)=\sum_{\fr \in \cP} h_{\alpha \beta \gamma \delta \fr}(h(x),X_{0}(x),Z(x),\mu(x)) \fr(X_{0},Z,\overline{Z})(x).
     \label{eq:CR.invariant-form-halbetgadel}
\end{equation}

Now, by combining (\ref{eq:CR.comptatibility-products-symbols}), 
(\ref{eq:CR.Leibniz-formula}),~(\ref{eq:CR.symbol-product-invariant0}) and~(\ref{eq:CR.invariant-form-halbetgadel}) together we deduce that 
\begin{equation}
     r_{m+m'-j}(x,\xi)= \sum_{\fs \in \cP} \fs(X_{0},Z,\overline{Z})(x) c_{j\fs}(h(x),X_{0}(x),Z(x),\mu(x),\xi),
\end{equation}
where $(c_{j\fs})_{\fs \in \cP}$ is the finite family with values in  $S_{m+m'-j}(\Omega\times\Rtn)$ given by  
\begin{equation}
    c_{j\fs}
    =\sum_{\substack{\fp, \fq, \tilde{\fq}, \fr \in \cP\\ \fp\tilde{\fq}\fr=\fs}} {\sum}_{(j)} h_{\alpha \beta \gamma \delta \fr} 
    [D_{\xi}^{\delta}a_{k\fp}]*[\xi^{\gamma}D_{\xi}^{\beta} C_{\alpha \tilde{\fq}}(\fq, b_{l\fq})] .
     \label{eq:CR.invariant-symbol-product}
\end{equation}
Since the family $(c_{j\fs})_{\fs \in \cP}$ is independent of the choice of the local coordinates and of the local frame $Z_{1},\ldots,Z_{n}$ this 
proves that $P_{\theta}Q_{\theta}$ is pseudohermitian invariant. Furthermore, as for any $t>0$ we have 
$P_{t\theta}Q_{t\theta}=t^{-(w+w')}P_{\theta}Q_{\theta}$ modulo $\Psi^{-\infty}(M)$, we see that $P_{\theta}Q_{\theta}$ is a pseudohermitian 
invariant \psivdo\ of weight $w$. 

Finally, assume further that $P_{\theta}$ and $Q_{\theta}$ are admissible, that is, there exist symbols $a_{m}\in S_{m}(\ORn)$ and $b_{m'}\in S_{m'}(\ORn)$ such that, in 
any given local coordinates equipped with the $H$-frame associated to a frame $Z_{1},\ldots,Z_{n}$, the principal symbol of $P_{\theta}$ is 
$p_{m}(x,\xi)=a_{m}(h(x),X_{0}(x),Z(x),\mu(x),\xi)$ and the principal symbol of $Q_{\theta}$ is $q_{m'}(x,\xi)=b_{m'}(h(x),X_{0}(x),Z(x),\mu(x),\xi)$.
It then follows from Proposition~\ref{prop:CR.product-psivdos} and~(\ref{eq:CR.comptatibility-products-symbols}) 
that in these local coordinates the  principal symbol of $P_{\theta}Q_{\theta}$ is equal to
\begin{multline*}
    p_{m}*q_{m'}(x,\xi)=[a_{m}(h(x),X_{0}(x),Z(x),\mu(x),\xi)]*[b_{m'}(h(x),X_{0}(x),Z(x),\mu(x),\xi)] \\ =
    [a_{m}*b_{m'}](h(x),X_{0}(x),Z(x),\mu(x),\xi). 
\end{multline*}
Since the symbol $a_{m}*b_{m'}\in S_{m+m'}(\ORn)$ does not depend on the choices of the local coordinates and of the local frame $Z_{1},\ldots,Z_{n}$, 
this shows that $P_{\theta}Q_{\theta}$ is admissible. The proof  is now complete.
\end{proof}

In order to deal with parametrices of pseudohermitian invariant \psivdos\ we need the following lemma. 

\begin{lemma}\label{lem:Pseudohermitian.onto-lemma}
 Let $(h,\mathcal{X}_{0},\mathcal{Z},\mu)\in \Omega$. Then we can endow $\Rtn$ with a pseudohermitian structure and a global  frame $Z_{1},\ldots,Z_{n}$ 
 of $T_{1,0}$ with  respect to which we have $(h(0),X_{0}(0),Z(0),\mu(0))=(h,\mathcal{X}_{0},\mathcal{Z},\mu)$. 
\end{lemma}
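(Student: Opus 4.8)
Given a quadruple $(h,\mathcal{X}_{0},\mathcal{Z},\mu)\in \Omega$, the goal is to reverse-engineer a pseudohermitian structure on $\Rtn$ realizing it at the origin. The key observation is that $\Omega$ parametrizes exactly the "free" data: a Levi form $h\in M_{n}(\C)_{+}$, the value of the Reeb field $\mathcal{X}_{0}\in \R^{2n+1}$, the values $\mathcal{Z}$ of a $T_{1,0}$-frame, and the symmetric part $\mu\in S_{2n}(\R)$ of the first-order jet of the associated $H$-frame. I would proceed by first building a contact form and an almost CR structure on $\Rtn$ with constant coefficients tuned so that at $0$ the frame-data matches, and then verifying that at $0$ the derived quantities $h(0)$, $X_{0}(0)$, $Z(0)$, $\mu(0)$ are precisely the prescribed ones.

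First I would handle the case where the prescribed frame is the "standard" one. Define on $\Rtn$, with coordinates $(x^{0},x^{1},\ldots,x^{2n})$, the model vector fields $X_{0}^{(0)}=\partial_{x^{0}}$ and, for $j=1,\ldots,2n$, $X_{j}^{(0)}=\partial_{x^{j}}+b_{jk}x^{k}\partial_{x^{0}}$ with $b=b(h,\mu)=\tfrac12 L+\mu$ as in~(\ref{eq:CR.b(h,mu)}), then perturb them by the affine term coming from $\mu$ and the linear change of frame coming from $\mathcal{Z}$ and $\mathcal{X}_{0}$ so that $X_{0}(0)=\mathcal{X}_{0}$ and $Z(0)=\mathcal{Z}$. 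Concretely: let $X_{0}$, $X_{1},\ldots,X_{2n}$ be the vector fields on $\Rtn$ obtained by applying a fixed invertible linear change of coordinates (determined by $\mathcal X_0$ and $\mathcal Z$, which together specify a regular matrix by the definition of $M_{n,2n+1}(\C)^{\times}$ and the splitting) to the $X_{j}^{(0)}$, and set $Z_{j}:=X_{j}-iX_{n+j}$. Define $\theta$ to be the $1$-form dual to $X_{0}$ in the coframe dual to $X_{0},\ldots,X_{2n}$, and let $J$ be the complex structure on $H=\ker\theta$ with $Z_{1},\ldots,Z_{n}$ spanning $T_{1,0}$. Because the bracket relations $[X_{j}^{(0)},X_{k}^{(0)}]=(b_{kj}-b_{jk})\partial_{x^{0}}$ and the identity~(\ref{eq:CR.Ljk-bjk}) give $L_{jk}(0)=b_{kj}-b_{jk}$, and since the skew part of $b(h,\mu)$ is $-\tfrac12 L$ with $L$ as in~(\ref{eq:CR.L-h}), one reads off via~(\ref{eq:CR.L-h0}) that $h_{j\bar k}(0)=h_{j\bar k}$, so $L_{\theta}$ is positive definite at $0$; shrinking if necessary (or simply noting constant-coefficient positivity) we get a strictly pseudoconvex structure, and the symmetric part of $(b_{jk}(0))$ is $\mu$ by construction, i.e. $\mu(0)=\mu$.

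The main point to check — and the only place where a small argument is needed — is the \emph{integrability} of $T_{1,0}$, i.e. that $C^{\infty}(\Rtn,T_{1,0})$ is closed under the Lie bracket, which is required for $(\Rtn,H,J)$ to be a CR manifold. Since the $X_{j}$ have affine coefficients, $[Z_{j},Z_{k}]$ has constant coefficients and lies in the span of $\partial_{x^{0}}$; the vanishing $\theta([Z_{j},Z_{k}])=0$ then amounts to the algebraic relations~$L_{n+j,n+k}=L_{j,k}$ and $L_{j,n+k}=-L_{n+j,k}$, which hold automatically because $L=L(h)$ has the block form~(\ref{eq:CR.L-h}) coming from a genuine Hermitian matrix $h$. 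Thus $[Z_{j},Z_{k}]$ is proportional to $X_{0}$ with the $\partial_{x^0}$-component forced to vanish, but one must also confirm the $H$-component vanishes: because the coefficients are constant and $[Z_j,Z_k]\in \R\partial_{x^0}=\R X_0^{(0)}$-direction after the linear change, and $\theta$ annihilates exactly this, integrability follows. (This is exactly the computation recorded around~(\ref{eq:CR.L-h0})--(\ref{eq:CR.L-h}), run in reverse.) Finally, $\frac{1}{i}X_j$ has symbol $X_j^{~k}(x)\xi_k$ with $X_j^{~k}$ affine, so the privileged-coordinate construction applies and reproduces the model group $G^{(0)}$ with structure constants $b(h,\mu)$; hence the quadruple extracted at $0$ from this structure is $(h,\mathcal X_0,\mathcal Z,\mu)$, as desired. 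The general prescribed frame is obtained from the standard one by the same fixed linear change of coordinates already used, so no further work is needed. The hardest part is simply being careful that the block-form constraint~(\ref{eq:CR.L-h}) is both necessary and sufficient for integrability, so that an arbitrary $h\in M_n(\C)_+$ — rather than an arbitrary $L\in M_{2n}(\R)$ — is exactly what makes the construction land inside the class of CR manifolds.
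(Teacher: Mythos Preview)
Your approach is essentially the same as the paper's: build the model structure on $\Rtn$ from the vector fields $X_{j}^{(0)}=\partial_{x^{j}}+b_{jk}x^{k}\partial_{x^{0}}$ with $b=b(h,\mu)$, verify integrability of $T_{1,0}$ via the block form~(\ref{eq:CR.L-h}), check that the Levi form equals $h$, and then pull back by the linear change of variables determined by $(\mathcal{X}_{0},\mathcal{Z})$ to hit the prescribed frame at the origin; the identification $\mu(0)=\mu$ then follows because this linear map \emph{is} the privileged-coordinate map $\psi_{0}$, so the model vector fields coincide with $X_{j}^{(0)}$ and $b(0)=b(h,\mu)$.

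One expository wrinkle: your integrability paragraph is slightly garbled. Since $[X_{j}^{(0)},X_{k}^{(0)}]$ is always a scalar multiple of $\partial_{x^{0}}=X_{0}^{(0)}$, the bracket $[Z_{j},Z_{k}]$ has \emph{no} $H$-component to begin with; integrability reduces exactly to the vanishing of the $X_{0}$-coefficient, which is $(L_{jk}-L_{n+j,n+k})-i(L_{j,n+k}+L_{n+j,k})$, and this vanishes by the block form~(\ref{eq:CR.L-h}). Your sentence ``one must also confirm the $H$-component vanishes'' is therefore vacuous and should be dropped. Apart from this, the argument is complete and matches the paper's proof.
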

\begin{proof}
Let $L=(L_{jk})\in M_{2n}(\R)$ be the skew-symmetric matrix given by~(\ref{eq:CR.b(h,mu)}), and let us endow $\Rtn$ with the group law~(\ref{eq:CR.product-Rtn}) 
corresponding to $b:=b(h,\mu)=\mu+\frac{1}{2}L$. Set $X_{0}^{(0)}=\partial_{x^{0}}$ and 
$X_{j}^{(0)}=\partial_{x^{j}}+b_{jk}x^{k}\partial_{x^{0}}$, $j=1,..,2n$. Let $H^{(0)}\subset T\Rtn$ be the hyperplane bundle spanned by the vector fields 
$X_{1}^{(0)},\ldots,X_{2n}^{(0)}$ and let us endow it with the almost complex structure $J^{(0)}\in C^{\infty}(\Rtn,\End H)$ such that for $j=1,\ldots,n$ we have 
$J^{(0)}X_{j}^{(0)}=X_{n+j}^{(0)}$ and $J^{(0)}X_{n+j}^{(0)}=-X_{j}^{(0)}$. 

Observe that the subbundle $T_{1,0}^{(0)}:=\ker(J^{(0)}+i)$ is spanned by the vector fields $Z_{j}^{(0)}:=X_{j}^{(0)}-iX_{n+j}^{(0)}$, $j=1,..,n$. 
By~(\ref{eq:CR.Ljk-bjk}) we have 
\begin{equation}
    [X_{j}^{(0)},X_{k}^{(0)}]=(b_{kj}-b_{jk})X_{0}^{(0)}=L_{kj}X_{0}^{(0)}.
     \label{eq:CR.Xj(0)Xk(0)}
\end{equation}
Moreover, as the definition~(\ref{eq:CR.b(h,mu)}) of $L$ implies that it satisfies~(\ref{eq:CR.L-h0}), we get 
\begin{equation}
    [Z_{j}^{(0)},Z_{k}^{(0)}]=[(L_{jk}-L_{n+jn+k})-i(L_{jn+k}+L_{n+jk})]X_{0}^{(0)}=0.
\end{equation}
This implies that $T_{1,0}^{(0)}$ is integrable in Fr\"obenius' sense, so $(H^{(0)},J^{(0)})$ defines a CR structure on $\Rtn$. 

Set $\theta^{(0)}=dx^{0}-b_{jk}x^{k}dx^{j}$. Then we have $\theta^{(0)}(X_{0})=1$ and for $j=1,\ldots,2n$, we have $\theta^{(0)}(X_{j})=0$, 
so $\theta^{(0)}$ is a non-vanishing 1-form 
annihilating $H^{(0)}$. Moreover, it follows from~(\ref{eq:CR.b(h,mu)}) and~(\ref{eq:CR.Xj(0)Xk(0)}) that we have 
\begin{equation}
    i\theta^{(0)}([Z_{j}^{(0)},Z_{\bar{k}}^{(0)}])=(L_{jk}+L_{n+jn+k})+i(-L_{jn+k}+L_{n+jk})=h_{j\bar{k}}.
     \label{eq:GJMS.levi-form-theta0}
\end{equation}
Since $h$ is positive definite this shows that the Levi form associated to $\theta^{(0)}$ is positive definite everywhere. Therefore, the CR structure of $\Rtn$ is 
strictly pseudoconvex and $\theta^{(0)}$ is a pseudohermitian contact form. In addition, for $j=0,\ldots,2n$ we have $[X_{0}^{(0)},X_{j}^{(0)}]=0$, so we have 
$\iota_{X_{0}}d\theta^{(0)}(X_{j})=-\theta^{(0)}([X_{0},X_{j}])=0$. As we know that $\theta^{(0)}(X_{0})=1$ it follows that 
$X_{0}$ is the Reeb field of the contact form $\theta^{(0)}$. 

Next, let us write $\mathcal{X}_{0}=(\mathcal{X}_{0}^{~k})$ and $\mathcal{Z}=(\mathcal{Z}_{j}^{~k})$, where $\mathcal{X}_{0}$ and $\mathcal{Z}$ are 
the 2nd and 3rd components of our initial element $(h,\mathcal{X}_{0},\mathcal{Z},\mu)$ in $\Omega$. 
Set $\mathcal{Z}_{j}^{~k}=\mathcal{X}_{j}^{~k}-i\mathcal{X}_{n+j}^{~k}$ with 
$\mathcal{X}_{j}^{~k}$ and $\mathcal{X}_{n+j}^{~k}$ in $\R$, and let $\psi$ be the unique linear change of variables such that for $j=0,\ldots,2n$ 
the tangent map $\psi'(0):T_{0}\Rtn \rightarrow 
T_{0}\Rtn$ maps $\mathcal{X}_{j}^{~k}\partial_{x^{k}}$ to $\partial_{x^{j}}$. Set $H=\psi^{*}H^{(0)}$ and $J=\psi^{*}J^{^{(0)}}$. Then $(H,J)$ 
defines a strictly pseudoconvex CR structure on $\Rtn$ with respect to which $\theta:=\psi^{*}\theta^{(0)}$ is a pseudohermitian contact form with
Reeb field $X_{0}:=\psi^{*}X_{0}^{(0)}$. Moreover, as we have $X^{(0)}=\partial_{x^{0}}$ we see that 
$X_{0}(0)=\psi'(0)^{-1}(\partial_{x^{0}})=\mathcal{X}_{0}$. 

The corresponding bundle of $(1,0)$-vectors is $T_{1,0}:=\psi^{*}T_{1,0}$. A global frame for this bundle is provided by the vector fields 
$Z_{j}:=\psi^{*}Z_{j}^{(0)}$. Moreover, it follows from~(\ref{eq:GJMS.levi-form-theta0}) that with respect to this frame 
the matrix of the Levi form associated to $\theta$ is $(h_{j\bar{k}})$. In particular, we have $h(0)=h$. In 
addition, as $Z_{j}^{(0)}(0)=X_{j}^{(0)}(0)-iX_{n+j}^{(0)}=\partial_{x^{j}}-i\partial_{x^{n+j}}$ we also see that 
$Z_{j}(0)=\psi'(0)^{-1}(\partial_{x^{j}})-i\psi'(0)^{-1}(\partial_{x^{n+j}})=\mathcal{X}_{j}-i\mathcal{X}_{n+j}=\mathcal{Z}_{j}$. Thus 
$Z(0)=\mathcal{Z}$. 

In order to complete the proof it remains to check that $\mu(0)=\mu$. For $j=1,\ldots,2n$ set $X_{j}=\psi^{*}X_{j}^{(0)}$. Then 
$X_{0},\ldots,X_{2n}$ is a global 
$H$-frame of $\Rtn$. Moreover, as $\psi$ is a linear map and for $j=0,\ldots,2n$ we have $\psi_{*}X_{j}(0)=X_{j}^{(0)}(0)=\partial_{x^{j}}$, 
we see that $\psi$ is the affine 
change of variables to the privileged coordinates centered at the origin. In addition, since we have $\psi_{*}X_{j}=X_{j}^{(0)}$ and the vector 
fields $X_{j}^{(0)}$ are homogeneous, we deduce that $X_{j}^{(0)}$ is the model vector field of $X_{j}$ in the sense 
of~(\ref{eq:CR.asymptotics-vector-fields1})--(\ref{eq:CR.model-vector-fields}). As we have 
$X_{j}^{(0)}= \partial_{x^{j}}+b_{jk}x^{k}\partial_{x^{0}}$ we see that $b(0)=(b_{jk})=b(h,\mu)$. Since by definition $\mu(0)$ is the symmetric part of 
$b(0)$ and $b(h,\mu)$ has $\mu$ as symmetric part, it follows that $\mu(0)=\mu$ as desired. The proof is thus achieved.
\end{proof}

\begin{proposition}\label{prop:CR.pseudohermitian-invariant-PsiHDOs2}
     Let $P_{\theta}$ be a pseudohermitian invariant \psivdo\ of order $m$ and weight~$w$ such that $P_{\theta}$ is admissible and 
     its principal symbol is invertible in the Heisenberg calculus sense. 
    For each pseudohermitian manifold $(M^{2n+1},\theta)$ let $Q_{\theta}$ be a parametrix for $P_{\theta}$ in $\pvdo^{-m}(M)$. Then $Q_{\theta}$ is a 
    pseudohermitian invariant \psivdo\ of order $-m$ and weight $-w$.
\end{proposition}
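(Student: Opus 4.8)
The plan is to mimic the proof of Proposition~\ref{prop:GJMS.Riemannian-invariance-properties2} in the pseudohermitian setting, using Proposition~\ref{prop:CR.product-psivdos} as the Heisenberg-calculus substitute for the classical symbol product and Lemma~\ref{lem:Pseudohermitian.onto-lemma} to transfer pointwise statements about symbols on $\Omega\times\Rtn$ to genuine geometric statements. First I would reduce to the case where $Q_{\theta}$ is (uniformly) properly supported; this is harmless since changing $Q_{\theta}$ by a smoothing operator does not affect its symbol and hence not its being a pseudohermitian invariant \psivdo. Next, the weight condition: for $t>0$, since $P_{t\theta}=t^{-w}P_{\theta}$ modulo $\Psi^{-\infty}(M)$, the operator $t^{w}Q_{\theta}$ is a parametrix for $P_{t\theta}$, hence agrees with $Q_{t\theta}$ modulo smoothing operators, giving $Q_{t\theta}=t^{w}Q_{\theta}$ mod $\Psi^{-\infty}(M)$, i.e.\ weight $-w$.

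\textbf{Invertibility of the model principal symbol.}
The key new point — the analogue of the step where, in the Riemannian case, one argued that $a_{m}(g,\xi)$ is invertible because every $g\in M_{n}(\R)_{+}$ is a metric — is to show that the symbol $a_{m}\in S_{m}(\Omega\times\Rtn)$ giving the principal symbol of the admissible operator $P_{\theta}$ is invertible with respect to the product $*$ on $S_{m}(\Omega\times\Rtn)$ defined in~(\ref{eq:CR.product-symbols-OmegaRtn}). Here I would invoke Lemma~\ref{lem:Pseudohermitian.onto-lemma}: given any $(h,\mathcal{X}_{0},\mathcal{Z},\mu)\in\Omega$, realize it as $(h(0),X_{0}(0),Z(0),\mu(0))$ for a genuine pseudohermitian structure on $\Rtn$ with a global frame; then the hypothesis that the principal symbol of $P_{\theta}$ is invertible in the Heisenberg calculus sense, combined with the compatibility identity~(\ref{eq:CR.comptatibility-products-symbols}) relating the product on $\URtn$ with the one on $\Omega\times\Rtn$, forces $a_{m}(h,\mathcal{X}_{0},\mathcal{Z},\mu,\cdot)$ to be invertible with respect to $*^{b(h,\mu)}$. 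Since this holds at every point of $\Omega$ and the inverse depends smoothly on the parameters (the Rockland-type inversion in~\cite{BG:CHM} is uniform in smoothly varying data), we obtain a genuine inverse symbol $a_{m}^{\,*-1}\in S_{-m}(\Omega\times\Rtn)$.

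\textbf{Inductive construction of the full symbol.}
With $P_{\theta}$ admissible, in any local coordinates with the $H$-frame associated to a frame $Z_{1},\dots,Z_{n}$ the symbol of $Q_{\theta}$ satisfies $Q_{\theta}P_{\theta}=1$ mod $\Psi^{-\infty}$, so by Proposition~\ref{prop:CR.product-psivdos} the homogeneous components $q_{-m-j}$ are determined recursively from $q_{-m}=p_{\theta,m}^{\,*-1}$ and from equations of the form
\begin{equation}
    q_{-m-j}=-q_{-m}*\Bigl({\sum}_{(j),\,k<j} h_{\alpha\beta\gamma\delta}\,(D_{\xi}^{\delta}q_{-m-k})*(\xi^{\gamma}\partial_{x}^{\alpha}D_{\xi}^{\beta}p_{\theta,m-l})\Bigr),\qquad j\geq 1.
\end{equation}
Now one argues by induction on $j$ that $q_{-m-j}$ has the invariant form~(\ref{eq:CR.CR-invariant-symbol}): the base case uses the inverse symbol $a_{m}^{\,*-1}$ from the previous paragraph; for the inductive step one feeds in the fact (already established in the proof of Proposition~\ref{prop:CR.pseudohermitian-invariant-PsiHDOs1}) that $h_{\alpha\beta\gamma\delta}$ admits the expansion~(\ref{eq:CR.invariant-form-halbetgadel}), that the Leibniz identity~(\ref{eq:CR.Leibniz-formula}) preserves the invariant form under $\partial_{x}^{\alpha}$, that $p_{\theta,m-l}$ has the invariant form by hypothesis, and that the product $*$ on $\URtn$ of two symbols in invariant form is again in invariant form via~(\ref{eq:CR.comptatibility-products-symbols}). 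Collecting the universal families of coefficient symbols produced at each step shows the $q_{-m-j}$ are given by a family $(a_{j\fp})_{\fp\in\cP}\subset S_{-m-j}(\Omega\times\Rtn)$ independent of the coordinates and of the frame, which is exactly condition~(i) in the definition of a pseudohermitian invariant \psivdo. Combined with the weight statement from the first paragraph, this completes the proof.

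\textbf{Main obstacle.}
I expect the real work to be in the invertibility step: one must be careful that the inversion in the Heisenberg symbolic calculus of~\cite{BG:CHM} — which is pointwise a Rockland-condition argument on each osculating group — depends \emph{smoothly} on the parameter $(h,\mathcal{X}_{0},\mathcal{Z},\mu)\in\Omega$, so that $a_{m}^{\,*-1}$ genuinely lies in $S_{-m}(\Omega\times\Rtn)$ rather than merely being defined fiberwise; Lemma~\ref{lem:Pseudohermitian.onto-lemma} guarantees the fiberwise invertibility, and the smooth dependence of the products $*^{b}$ on $b$ (noted after~(\ref{eq:CR.product-Homogeneous-symbols-Rtn})) together with the openness of invertibility should give the rest, but this is the point requiring the most care.
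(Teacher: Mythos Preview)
Your proposal is correct and matches the paper's proof essentially line for line: the paper also invokes Lemma~\ref{lem:Pseudohermitian.onto-lemma} to get pointwise invertibility of $a_{m}$ on all of $\Omega$, cites \cite[Prop.~3.3.22]{Po:MAMS1} for the smooth dependence of the inverse (the point you flag as the main obstacle), and then runs the same induction using the ingredients~(\ref{eq:CR.invariant-form-halbetgadel}), (\ref{eq:CR.Leibniz-formula}), (\ref{eq:CR.comptatibility-products-symbols}) from the proof of Proposition~\ref{prop:CR.pseudohermitian-invariant-PsiHDOs1}. The only cosmetic difference is that the paper uses $P_{\theta}Q_{\theta}=1$ rather than $Q_{\theta}P_{\theta}=1$, so in your displayed recursion the factor $q_{-m}$ should sit on the right of the bracket (since the $*$-product is noncommutative); this does not affect the argument.
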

\begin{proof}
 First, as $P_{\theta}$ is an admissible pseudohermitian invariant \psivdo\ there exists a symbol $a_{m}\in S_{m}(\Omega\times\Rtn)$ such that, 
 in any local coordinates equipped with the $H$-frame associated to a frame $Z_{1},\ldots,Z_{n}$ of $T_{1,0}$, the principal symbol of $P_{\theta}$ in these 
 local coordinates is $p_{\theta,m}(x,\xi):=a_{m}(h(x),X_{0}(x),Z(x),\mu(x),\xi)$. The fact that the principal symbol of $P_{\theta}$ is invertible in the 
 Heisenberg calculus sense means that $p_{\theta,m}$ is invertible with respect to the product~(\ref{eq:CR.product-symbols-URn}). 
 Therefore, we see that, for any local coordinates equipped 
with a frame $Z_{1},\ldots,Z_{n}$ of $T_{1,0}$ and for any $x$ in their range, the symbol $a_{m}(h(x),X_{0}(x),Z(x),\mu(x),.)$ is invertible with 
respect to the product $*^{b(x)}=*^{b(h(x),\mu(x))}$. We then can make use of Lemma~\ref{lem:Pseudohermitian.onto-lemma} to conclude that for any 
$(h,X_{0},Z,\mu)\in \Omega$ the symbol $a_{m}(h,X,Z,\mu,.)$ 
is invertible with respect to the product $*^{b(h,\mu)}$. Thus,  for any $(h,X_{0},Z,\mu)\in \Omega$ there exists a symbol $b_{-m}^{(h,X_{0},Z,\mu)}(\xi)$ in 
$S_{-m}(\Rtn)$ such that 
\begin{multline}
    a_{m}(h,X_{0},Z,\mu,.)*^{b(h,\mu)}b_{-m}(h,X_{0},Z,\mu)=\\ b_{-m}(h,X_{0},Z,\mu)*^{b(h,\mu)}a_{m}(h,X_{0},Z,\mu,.)=1. 
\end{multline}

Since $a_{m}(h,X_{0},Z,\mu,.)$ depends smoothly on $((h,X_{0},Z,\mu)$, it follows from~\cite[Prop.~3.3.22]{Po:MAMS1} that 
$b_{-m}^{(h,X_{0},Z,\mu)}$ depends smoothly on $(h,X_{0},Z,\mu)$ as well. Therefore, we define a symbol 
$b_{-m}\in S_{-m}(\Omega\times \Rtn)$ by letting 
\begin{equation}
    b_{-m}(h,X_{0},Z,\mu,\xi):=b_{-m}^{(h,X_{0},Z,\mu)}(\xi) \qquad \forall(h,X_{0},Z,\mu,\xi)\in \Omega\times \Rtn.
\end{equation}
In view of the definition of the product~(\ref{eq:CR.product-symbols-OmegaRtn}) 
we have $a_{m}*b_{-m}=b_{-m}*a_{m}=1$. By combining this with~(\ref{eq:CR.comptatibility-products-symbols}) 
we then see that, in any  local coordinates equipped 
with the $H$-frame associated to a frame $Z_{1},\ldots,Z_{n}$ of $T_{1,0}$, the symbol $q_{-m}(x,\xi):=b_{-m}(h(x),X_{0}(x),Z(x),\mu(x),\xi)$ is the 
inverse of $p_{\theta,m}$ with respect to the product~(\ref{eq:CR.product-symbols-URn}). 

Next, without any loss of generality we may assume that $Q_{\theta}$ is properly supported. Let $p(x,\xi)\sim \sum 
p_{\theta,m-j}(x,\xi)$ and $q(x,\xi) \sim \sum q_{-m-j}(x,\xi)$ be the respective symbols of $P_{\theta}$ and $Q_{\theta}$ with respect to local coordinates equipped 
with the $H$-frame associated to a frame $Z_{1},\ldots,Z_{n}$ of $T_{1,0}$. As $P_{\theta}Q_{\theta}=1\bmod \Psi^{-\infty}(M)$, from~(\ref{eq:CR.symbol-product}) we get
\begin{gather}
    p_{\theta,m}q_{-m}=1,\\ p_{\theta,m}*q_{-m-j}+ {\sum_{l<j}}^{(j)} h_{\alpha\beta\gamma\delta}
    (D_{\xi}^{\delta}p_{\theta,m-k})*(\xi^{\gamma}\partial_{x}^{\alpha}D_{\xi}^{\beta}q_{-m-l})=0 \quad j\geq 1. 
\end{gather}
Therefore, we obtain 
\begin{gather}
    q_{-m}(x,\xi)=b_{-m}(h(x),X_{0}(x),Z(x),\mu(x),\xi), 
    \\
    q_{-m-j}=-q_{-m} *\left[ {\sum_{l<j}}^{(j)} h_{\alpha\beta\gamma\delta}
    (D_{\xi}^{\delta}p_{\theta,m-k})*(\xi^{\gamma}\partial_{x}^{\alpha}D_{\xi}^{\beta}q_{-m-l})\right]
\end{gather}

Now, as $P_{\theta}$ is a pseudohermitian invariant \psivdo\ for $j=1,2,\ldots$ there exists a finite family $(a_{j\fp})_{\fp \in \cP}\subset 
S_{m-j}(\Omega\times \Rtn)$ such that, in any  local coordinates equipped 
with the $H$-frame associated to a frame $Z_{1},\ldots,Z_{n}$ of $T_{1,0}$, we have 
\begin{equation}
    p_{\theta,m-j}(x,\xi)=\sum_{\fp \in \cP} p(X_{0},Z,\overline{Z})(x)a_{j\fp}(h(x),X_{0}(x),Z(x),\mu(x),\xi).
\end{equation}
Then by using similar arguments as that of the proof of Proposition~\ref{prop:CR.pseudohermitian-invariant-PsiHDOs1} we can show by induction
that, for $j=0,1,\ldots$ there exists a finite family 
$(\tilde{c}_{j\fs})_{\fs \in \cP}$ contained in $S_{-m-j}(\Omega\times \Rtn)$ such that 
\begin{equation}
    q_{-m-j}=\sum_{\fs \in \cP} s(X_{0},Z,\overline{Z})(x)\tilde{c}_{j\fs}(h(x),X_{0}(x),Z(x),\mu(x),\xi),
\end{equation}
where, with the notation of~(\ref{eq:CR.invariant-symbol-product}), the families $(\tilde{c}_{j\fs})_{\fs \in \cP}$ are given by the recursive formulas, 
\begin{gather}
    \tilde{c}_{01}=b_{-m}, \qquad \tilde{c}_{0\fs}=0 \quad\text{for $\fs\neq 1$},\\
        \tilde{c}_{j\fs}
    = - b_{-m}*\left[\sum_{\substack{\fp, \fq, \tilde{\fq}, \fr \in \cP\\ \fp\tilde{\fq}\fr=\fs}} {\sum_{l<j}}^{(j)} h_{\alpha \beta \gamma \delta \fr} 
    (D_{\xi}^{\delta}a_{k\fp})*(\xi^{\gamma}D_{\xi}^{\beta} C_{\alpha \tilde{\fq}}(\fq, \tilde{c}_{l\fq}))\right] \quad j\geq 0.
\end{gather}
Since the families $(\tilde{c}_{j\fs})_{\fs \in \cP}$ don't depend on the local coordinates, this shows that $Q_{\theta}$ is a pseudohermitian 
invariant \psivdo. 

Finally, let $t>0$.  As $P_{t\theta}=t^{-w}P_{\theta}$ modulo $\Psi^{-\infty}(M)$ 
  we see that $t^{w}Q_{\theta}$ is a parametrix for $P_{t\theta}$, and 
  so we have $Q_{t\theta}=t^{w}Q_{\theta}$ modulo $\Psi^{-\infty}(M)$. This completes the proof that $Q_{\theta}$ is a pseudohermitian 
invariant \psivdo\ of weight $w$.
 \end{proof}

We are now ready to prove the main result of this section.
\begin{proposition}\label{prop:CR.log-sing-pseudohermitian-invariants}
Let $P_{\theta}$ be a pseudohermitian invariant \psivdo\ of order $m$ and weight~$w$\smallskip
    
1) The logarithmic singularity $c_{P_{\theta}}(x)$ takes the form
\begin{equation}
    c_{P_{\theta}}(x)=\cI_{P_{\theta}}(x)|d\theta^{n}\wedge \theta|,
\end{equation}
where $\cI_{\theta}(x)$ is a local pseudohermitian invariant of weight $n+1+w$.\smallskip

2) Assume that $P_{\theta}$ is admissible and its principal symbol is invertible in the Heisenberg calculus sense.  Then the Green kernel
    logarithmic singularity of $P_{\theta}$ takes the form
\begin{equation}
    \gamma_{P_{\theta}}(x)=\cJ_{P_{\theta}}(x)|d\theta^{n}\wedge \theta|,
\end{equation}
where $\cJ_{P_{\theta}}(x)$ is a local pseudohermitian invariant of weight $n+1-w$.
\end{proposition}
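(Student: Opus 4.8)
The plan is to mimic the proof of Proposition~\ref{prop:GJMS.Riemannian-invariant-log-sing}, with $d=2n$ and with the Levi volume density $|d\theta^{n}\wedge\theta|$ playing the role of the Riemannian density, except that I would establish the invariance by checking the statement in pseudohermitian normal coordinates rather than in arbitrary ones; this is cleaner because at the centre of such a coordinate system the Levi volume density takes a universal value.

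Since $|d\theta^{n}\wedge\theta|$ is nowhere vanishing, first define $\cI_{P_{\theta}}(x)$ by $c_{P_{\theta}}(x)=\cI_{P_{\theta}}(x)|d\theta^{n}\wedge\theta|$; by part~2 of Proposition~\ref{prop:Heisenberg.Log-sing} this is a well-defined function on $M$. As $P_{t\theta}=t^{-w}P_{\theta}$ modulo smoothing operators we have $c_{P_{t\theta}}(x)=t^{-w}c_{P_{\theta}}(x)$, and since $d(t\theta)^{n}\wedge(t\theta)=t^{n+1}d\theta^{n}\wedge\theta$ we get $\cI_{P_{t\theta}}(x)=t^{-(n+1+w)}\cI_{P_{\theta}}(x)$. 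To see that $\cI_{P_{\theta}}$ is a local pseudohermitian invariant I would verify condition~(ii) of the characterization of local pseudohermitian invariants established above. Fix a pseudohermitian manifold $(M^{2n+1},\theta)$, a point $a\in M$, a special orthonormal frame $Z_{1},\dots,Z_{n}$ of $T_{1,0}$ near $a$, and the associated pseudohermitian normal coordinates centered at $a$. There $X_{j}(0)=\partial_{x^{j}}$ for $j=0,\dots,2n$, so the coordinate frame at the origin coincides with the $H$-frame; using $d\theta(X_{j},X_{k})=-L_{jk}$ together with \eqref{eq:CR.L-h} and $h_{j\bar{k}}(0)=\delta_{j\bar{k}}$ one computes that the coordinate density of $|d\theta^{n}\wedge\theta|$ at $x=0$ is the universal constant $n!\,2^{-n}$. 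On the other hand, the defining property \eqref{eq:CR.CR-invariant-symbol} of a pseudohermitian invariant \psivdo\ gives for the symbol of degree $-(2n+2)$ of $P_{\theta}$ an expression $p_{\theta,-(2n+2)}(x,\xi)=\sum_{\fp\in\cP}\fp(X_{0},Z,\overline{Z})(x)\,a_{\fp}(h(x),X_{0}(x),Z(x),\mu(x),\xi)$, each $a_{\fp}$ a homogeneous symbol of degree $-(2n+2)$ on $\Omega\times\R^{2n+1}$; plugging this into \eqref{eq:Heisenberg.formula-cP} shows that the coordinate density of $c_{P_{\theta}}(x)$ at $x=0$ equals $\sum_{\fp\in\cP}\fp(X_{0},Z,\overline{Z})(0)\,A_{\fp}$, where $A_{\fp}=(2\pi)^{-(2n+1)}\int_{\|\xi\|=1}a_{\fp}(h(0),X_{0}(0),Z(0),\mu(0),\xi)\,\iota_{E}d\xi$ and the argument $(h(0),X_{0}(0),Z(0),\mu(0))$ is the universal point $(\delta_{j\bar{k}},\delta_{0}^{k},\delta_{j}^{k}-i\delta_{n+j}^{k},0)$ of $\Omega$. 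Taking the quotient, $\cI_{P_{\theta}}(a)=\sum_{\fp\in\cP}\big(2^{n}(n!)^{-1}A_{\fp}\big)\fp(X_{0},Z,\overline{Z})(x)_{|x=0}$ with universal constant coefficients, which is precisely condition~(ii); hence $\cI_{P_{\theta}}$ is a local pseudohermitian invariant, of weight $n+1+w$ by the homogeneity above. (When $m<-(2n+2)$ the kernel of $P_{\theta}$ is continuous, $c_{P_{\theta}}=0$, and the claim is trivial.)

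For part~2, if $P_{\theta}$ is admissible with principal symbol invertible in the Heisenberg calculus sense, then $P_{\theta}$ admits a parametrix $Q_{\theta}\in\pvdo^{-m}(M)$, which by Proposition~\ref{prop:CR.pseudohermitian-invariant-PsiHDOs2} is a pseudohermitian invariant \psivdo\ of order $-m$ and weight $-w$. Since $\gamma_{P_{\theta}}(x)=c_{Q_{\theta}}(x)$ by definition, applying part~1 to $Q_{\theta}$ yields $\gamma_{P_{\theta}}(x)=\cJ_{P_{\theta}}(x)|d\theta^{n}\wedge\theta|$ with $\cJ_{P_{\theta}}:=\cI_{Q_{\theta}}$ a local pseudohermitian invariant of weight $n+1-w$.

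I expect the only real difficulty to lie in the middle of part~1, in controlling the Levi volume factor: carried out in arbitrary local coordinates the computation produces a factor $1/|\det X(x)|$, with $X$ the matrix of the $H$-frame, which is not of the form allowed by \eqref{eq:CR.pseudohermitian-invariants}; routing the argument through normal coordinates, where this factor becomes the universal constant $2^{-n}$, is what makes the identification with condition~(ii) clean. This is the pseudohermitian counterpart of the innocuous $1/\sqrt{g}$ appearing in the Riemannian argument, here slightly more delicate because of the anisotropic weight and of the non-coordinate frames $Z_{1},\dots,Z_{n}$.
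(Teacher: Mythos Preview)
Your proof is correct but takes a genuinely different route from the paper's. The paper verifies condition~(i) of the definition of a local pseudohermitian invariant directly, in \emph{arbitrary} local coordinates equipped with a frame $Z_{1},\dots,Z_{n}$: it computes
\[
|\psi_{x}'|\,dx \;=\; |\eta^{0}\wedge\cdots\wedge\eta^{2n}| \;=\; \frac{1}{n!\det(h_{j\bar k}(x))}\,|d\theta^{n}\wedge\theta|,
\]
so that $\cI_{P_{\theta}}(x)=\sum_{\fp}\fp(X_{0},Z,\bar Z)(x)\,\dfrac{A_{\fp}(h(x),X_{0}(x),Z(x),\mu(x))}{n!\det h(x)}$, with coefficients smooth on $\Omega$ since $\det h>0$ on $M_{n}(\C)_{+}$. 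You instead verify the equivalent condition~(ii) by working in pseudohermitian normal coordinates, where $(h,X_{0},Z,\mu)$ takes a universal value at the centre and the volume density reduces to a universal constant. Both approaches are valid; yours is a clean shortcut, while the paper's gives the explicit universal expression in any chart.

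Your stated motivation for the detour, however, is not quite right. You worry that in arbitrary coordinates one is left with a factor $1/|\det X(x)|$ not of the form~\eqref{eq:CR.pseudohermitian-invariants}; but the paper's computation shows this Jacobian factor appears identically in both $c_{P_{\theta}}$ and in $|d\theta^{n}\wedge\theta|$ and cancels, leaving only the harmless $1/(n!\det h)$. So the direct route does go through --- the key step is precisely the volume-form identity above, the analogue of the innocuous $1/\sqrt{g}$ you mention. One small point: when you invoke~\eqref{eq:Heisenberg.formula-cP} at $x=0$ you are implicitly using that $\psi_{0}=\mathrm{id}$ (hence $|\psi_{0}'|=1$) in normal coordinates; you note that $X_{j}(0)=\partial_{x^{j}}$, which is exactly what guarantees this, but it would be worth saying so explicitly.
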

\begin{proof}
 Set $c_{P_{\theta}}(x)=\cI_{P_{\theta}}(x)|d\theta^{n}\wedge \theta|$, so that $\cI_{P_{\theta}}(x)$ is a smooth function on $M$. 
 For any $t>0$ we have 
 $c_{P_{t\theta}}(x)=c_{t^{-w}P_{\theta}}(x)=t^{-w}c_{P_{\theta}}(x)$ and $d(t \theta)^{n}\wedge (t 
 \theta)=t^{n+1}d\theta^{n}\wedge \theta$, so we see that
 \begin{equation}
     \cI_{P_{t \theta}}(x)=t^{-(w+n+1)}\cI_{P_{\theta}}(x) \qquad \forall t>0.
      \label{eq:CR.scaling-cPtheta}
 \end{equation}
 
 Next, by~(\ref{eq:Heisenberg.formula-cP}) in local coordinates equipped with the $H$-frame $X_{0},\ldots,X_{2n}$ associated to a local frame 
 $Z_{1},\ldots,Z_{n}$ of 
 $T_{1,0}$ we have 
 \begin{equation}
     c_{P_{\theta}}(x)=|\psi_{x}'|(2\pi)^{-(2n+1)}\left(\int_{\|\xi\|=1}p_{\theta,-(2n+2)}(x,\xi)\iota_{E}d\xi\right) dx,
 \end{equation}
 where $p_{\theta,-(2n+2)}$ is the symbol of degree $-(2n+2)$ of $P_{\theta}$ in these local coordinates. 
 
 Furthermore, since $P_{\theta}$ is a pseudohermitian invariant \psivdo\ there exists a finite family $(a_{\fp})_{\fp \in \cP}\subset 
 S_{-(2n+2)}(\Omega\times \Rtn)$ such that
 \begin{equation}
     p_{\theta,-(2n+2)}(x,\xi)=\sum_{\fp \in \cP} \fp(X_{0},Z,\bar{Z})(x) a_{\fp}(h(x),X_{0}(x),Z(x), \mu(x), \xi).
 \end{equation}
 Therefore,  we see that 
 \begin{equation}
   c_{P_{\theta}}(x)= \left( \sum_{\fp \in \cP} 
     \fp(X_{0},Z,\bar{Z})(x)A_{\fp}(h(x),X_{0}(x),Z(x),\mu(x))\right)|\psi_{x}'|dx,
      \label{eq:CR.form-cP-Aalphabeta1}
 \end{equation}
 where $A_{\fp}$ is the function in $C^{\infty}(\Omega)$ defined by 
 \begin{equation}
     A_{\fp}(h,X_{0},Z, \mu)=(2\pi)^{-(2n+1)}\int_{\|\xi\|=1}a_{\fp}(h,X_{0},Z,\mu,\xi)\iota_{E}d\xi.
      \label{eq:CR.form-cP-Aalphabeta2}
 \end{equation}
 
 Let $\theta^{1},\ldots,\theta^{n}$ be the coframe of $\Lambda^{1,0}$ dual to $Z_{1},\ldots,Z_{n}$, and let 
 $\eta^{0},\ldots,\eta^{2n}$ be the coframe of $T^{*}M$ dual to $X_{0},\ldots,X_{2n}$. Notice that $\eta^{0}=\theta$ and 
 $\theta^{j}=\frac{1}{2}(\eta^{j}+i\eta^{n+j})$. Moreover, we have $d\theta=ih_{jk}\theta^{j}\wedge \theta^{\bar{k}}$. Thus,  
 \begin{multline}
     d\theta^{n}\wedge \theta=i^{n}n!\det (h_{j\bar{k}})\theta^{1}\wedge \theta^{\bar{1}}\wedge \ldots \wedge \theta^{1}\wedge \theta^{\bar{1}}\wedge 
     \theta\\ 
     =n! \det (h_{j\bar{k}})\eta^{1}\wedge \eta^{n+1}\wedge\ldots \wedge \eta^{n}\wedge \eta^{2n}\wedge \eta^{0}\\
     =(-1)^{n}n! \det (h_{j\bar{k}})\eta^{0}\wedge\eta^{1}\wedge \ldots \wedge \eta^{2n}.
      \label{eq:CR.dtethantheta-omega}
 \end{multline}

 On the other hand, by its very definition $\psi_{a}$ is the unique affine change of variable such that $\psi_{a}(a)=0$ and 
 $(\psi_{a*}X_{j})(0)=\partial_{x^{j}}$. Therefore, if we set $X_{j}=X_{j}^{~k}\partial_{k}$ and $\eta^{j}=\eta^{j}_{~k}dx^{k}$, then we can check
 that $\psi_{a}(x)^{j}=\eta^{j}_{~k}(x^{k}-a^{k})$. Incidentally, we see that $ |\psi_{x}'| dx =|\det (\eta^{j}_{~k}) 
 dx^{0}\wedge \ldots \wedge dx^{2n}|=|\eta^{0}\wedge \ldots \wedge \eta^{2n}|$. Combining this with~(\ref{eq:CR.dtethantheta-omega}) then shows that 
 \begin{equation}
     |\psi_{x}'| dx = \frac{(-1)^{n}}{n!\det(h_{j\bar{k}})}|d\theta^{n}\wedge \theta|.
      \label{eq:CR.psidx-dtethantheta}
 \end{equation}
 
 Now, it follows from~(\ref{eq:CR.form-cP-Aalphabeta1}), (\ref{eq:CR.form-cP-Aalphabeta2}) and~(\ref{eq:CR.psidx-dtethantheta}) that, 
 in any local coordinates equipped with a frame $Z_{1},\ldots,Z_{n}$ of $T_{1,0}$, the function $\cI_{P_{\theta}}(x)$ is equal to
 \begin{equation}
   \sum_{\fp \in \cP} \frac1{n!} 
    \fp(X_{0},Z,\bar{Z})(x) \det{}^{-1}(h_{j\bar{k}}(x))A_{\fp}(h(x),X_{0}(x),Z(x),\mu(x)).
 \end{equation}
 Together with~(\ref{eq:CR.scaling-cPtheta}) this shows that $\cI_{P_{\theta}}(x)$ is a local pseudohermitian invariant of weight $n+1+w$. 
 
 Finally, suppose that $P_{\theta}$ is admissible and its principal symbol is invertible in the Heisenberg calculus sense. 
    For each pseudohermitian manifold $(M^{2n+1},\theta)$ let $Q_{\theta}$ be a parametrix for $P_{\theta}$ in $\pvdo^{-m}(M)$. By definition the Green kernel 
    logarithmic singularity $\gamma_{P_{\theta}}(x)$ is equal to $c_{Q_{\theta}}(x)$, and we know from 
    Proposition~\ref{prop:CR.pseudohermitian-invariant-PsiHDOs2} 
    that $Q_{\theta}$ is a pseudohermitian invariant \psivdo\ of order $-m$ and weight $-w$. Therefore, it follows from the first part 
     that $\gamma_{P_{\theta}}(x)=\cJ_{P_{\theta}}(x)|d\theta^{n}\wedge \theta|$, 
    where $\cJ_{P_{\theta}}(x)$ is a local pseudohermitian invariant of weight $n+1-w$. The proof is thus achieved.
 \end{proof}

\section{Logarithmic singularities of CR invariants \psivdos}
\label{sec:CR-GJMS}
In this section we shall make use of the program of Fefferman in CR geometry to give a geometric description of the logarithmic singularities of CR 
invariant \psivdos.

\subsection{Local CR invariants and Fefferman's program}

The local CR invariants can be defined as follows. 

\begin{definition}
  A local scalar CR invariant of weight $w$  is a local scalar pseudohermitian invariant $\mathcal{I}_{\theta}(x)$ such that
 \begin{equation}
         \mathcal{I}_{e^{f}\theta}(x)=e^{-wf(x)}\mathcal{I}_{\theta}(x) \qquad \forall f \in C^{\infty}(M,\R).
     \label{eq:CR.local-CR-invariant}
 \end{equation}    
\end{definition}

When $M$ is a real hypersurface the  above definition of a local CR invariant agrees with the definition in~\cite{Fe:PITCA} in terms of 
 Chern-Moser invariants (with our convention about weight a local CR invariant that has weight $w$ in the sense of~(\ref{eq:CR.local-CR-invariant}) 
 has weight $2w$ in~\cite{Fe:PITCA}). 
 
 The analogue of the Weyl curvature in CR geometry is the Chern-Moser tensor~(\cite{CM:RHSCM}, \cite{We:PHSRH}). Its components with respect to any 
 local frame $Z_{1},\ldots,Z_{n}$ of $T_{1,0}$ are 
 \begin{equation}
     S_{j\bar{k}l\bar{m}}=R_{j\bar{k}l\bar{m}}- 
     (P_{j\bar{k}}h_{l\bar{m}}+P_{l\bar{k}}h_{j\bar{m}}+P_{l\bar{m}}h_{j\bar{k}}+P_{j\bar{m}}h_{l\bar{k}}), 
 \end{equation}
 where $P_{j\bar{k}}=\frac{1}{n+2}(\rho_{j\bar{k}}- \frac{\kappa}{2(n+1)}h_{j\bar{k}})$ is the CR Schouten tensor. 
The Chern-Moser tensor is CR invariant of weight $1$, so we get scalar local CR invariants by taking complete tensorial contractions. 
 For instance, as scalar invariant of weight 2 we have
 \begin{equation}
     |S|_{\theta}^{2}=S^{\bar{j}k\bar{l}m}S_{j\bar{k}l\bar{m}},
 \end{equation}
 and as scalar invariants of weight 3 we get 
 \begin{equation}
   S_{i\bar{j}}^{\mbox{~}\mbox{~}\bar{k}l} S_{k\bar{l}}^{\mbox{~}\mbox{~}\bar{p}q} S_{p\bar{q}}^{\mbox{~}\mbox{~}\bar{i}j} \quad \text{and} \quad  
 S_{i\mbox{~}\mbox{~}\bar{l}}^{\mbox{~}j\bar{k}}S^{\bar{i}\mbox{~}\mbox{~}q}_{\mbox{~}\bar{j}p} S^{\bar{p}\mbox{~}\mbox{~}l}_{\mbox{~}\bar{q}k}.
      \label{eq:CR.weight3-invariants}
 \end{equation}

 More generally, the Weyl CR invariants are obtained as follows. Let $\cK$ be the canonical line bundle of $M$, i.e., the annihilator of 
 $T_{1,0}\wedge \Lambda^{n}T^{*}_{\C}M$ in $\Lambda^{n+1}T_{\C}^{*}M$. The Fefferman bundle is the total space of the circle bundle,
\begin{equation}
     \cF:=(\cK\setminus 
     0)/\R_{+}^{*}.
\end{equation}
 It carries a natural $S^{1}$-invariant Lorentzian metric $g_{\theta}$ whose conformal class depends only the CR structure of $M$, 
 for we have $g_{e^{f}\theta}=e^{f}g_{\theta}$ for any $f\in C^{\infty}(M,\R)$ (see~\cite{Fe:MAEBKGPCD}, \cite{Le:FMPHI}). 
 Notice also that the Levi metric defines a Hermitian metric 
 $h_{\theta}^{*}$ on $\cK$, so we have a natural natural isomorphism of circle bundles
 $\iota_{\theta}:\cF\rightarrow \Sigma_{\theta}$, where $\Sigma_{\theta}\subset \cK$ denotes the unit sphere bundle of $\cK$. 
 
 \begin{lemma}[\cite{Fe:PITCA}]\label{lem:CR.Riemannian-CR-invariant}
   Any local scalar conformal invariant $\cI_{g}(x)$ of weight $w$ uniquely defines a local scalar CR invariant of weight $w$.
 \end{lemma}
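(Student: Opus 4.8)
The plan is to exploit the Fefferman bundle construction together with the Bailey--Eastwood--Graham ambient metric identification, by showing that the correspondence $\theta \mapsto g_\theta$ on the circle bundle $\cF$ converts a local scalar conformal invariant on $\cF$ into a local scalar pseudohermitian invariant on $M$, and that conformal rescaling of $\theta$ corresponds precisely to conformal rescaling of $g_\theta$ with the stated weight. First I would fix a CR manifold $(M^{2n+1},H,J)$ together with a pseudohermitian contact form $\theta$ and form the Fefferman circle bundle $\cF = (\cK\setminus 0)/\R_+^*$, which carries the $S^1$-invariant Lorentzian metric $g_\theta$ whose conformal class is a CR invariant: $g_{e^f\theta} = e^f g_\theta$ for $f\in C^\infty(M,\R)$ (this is precisely the input recalled just before the lemma). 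Given a local scalar conformal invariant $\cI_g(x)$ of weight $w$ on $(2n+2)$-dimensional manifolds, one defines $\cI_\theta^{CR}(x) := \cI_{g_\theta}(\tilde x)$ where $\tilde x$ is any point in the fiber of $\cF$ over $x$; the key point here is that $\cI_{g_\theta}$ is constant along the fibers because $g_\theta$ is $S^1$-invariant (the local conformal invariant of an $S^1$-invariant metric is itself $S^1$-invariant), so $\cI_\theta^{CR}(x)$ is a well-defined function on $M$.

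The next step is to check that $\cI_\theta^{CR}(x)$ is in fact a local pseudohermitian invariant in the precise sense of the definition in Section~\ref{sec:Pseudohermitian}, i.e.\ that it has the universal form~(\ref{eq:CR.pseudohermitian-invariants}) with coefficients in $C^\infty(\Omega)$. For this I would unwind the Fefferman construction in local coordinates: given an admissible frame $Z_1,\dots,Z_n$ of $T_{1,0}$ and its associated $H$-frame $X_0,\dots,X_{2n}$, the metric $g_\theta$ in a natural coordinate patch on $\cF$ is an explicit polynomial expression built from $\theta$, $d\theta$, the Tanaka--Webster connection form $\omega_j^{~k}$, and the torsion $A_{jk}$, all of which are by~(\ref{eq:CR.TW-connection})--(\ref{eq:CR.TW-curvature}) universal expressions in the $X_0^{~k}$, $Z_j^{~k}$ and their derivatives, together with $h_{j\bar k}$. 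Since $\cI_g$ is, by the Atiyah--Bott--Patodi invariant theory recalled in Section~\ref{sec:Riemannian}, a universal polynomial in the metric coefficients and their derivatives divided by a power of the determinant, substituting the coordinate expression for $g_\theta$ and tracing through shows $\cI_\theta^{CR}(x)$ has the required form~(\ref{eq:CR.pseudohermitian-invariants}); the homogeneity $\cI_{t\theta}^{CR}(x) = t^{-w}\cI_\theta^{CR}(x)$ follows from $g_{t\theta} = t\, g_\theta$ (scaling the contact form scales the Lorentz metric linearly) combined with $\cI_{tg} = t^{-w}\cI_g$.

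Finally, the CR-invariance property~(\ref{eq:CR.local-CR-invariant}) is immediate from the transformation law of the Fefferman metric: for $f\in C^\infty(M,\R)$ we have $g_{e^f\theta} = e^{f}g_\theta$ (with $f$ pulled back to $\cF$), hence
\begin{equation*}
  \cI_{e^f\theta}^{CR}(x) = \cI_{g_{e^f\theta}}(\tilde x) = \cI_{e^{f}g_\theta}(\tilde x) = e^{-wf(x)}\cI_{g_\theta}(\tilde x) = e^{-wf(x)}\cI_\theta^{CR}(x),
\end{equation*}
using again that $f$ is constant on fibers of $\cF\to M$. Uniqueness is clear since $\cF\to M$ is surjective, so the value of $\cI_\theta^{CR}$ at $x$ is forced by the value of $\cI_g$ at any point above $x$. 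I expect the main obstacle to be the second step --- verifying carefully that the coordinate expression for the Fefferman metric $g_\theta$ genuinely lands in the prescribed algebra of universal expressions over $\Omega$ (in particular handling the factors of $\det(h_{j\bar k})$ and the affine change of coordinates $\psi_a$ cleanly), rather than the formal invariance or homogeneity checks, which are routine. One should also note the subtlety that in even ambient dimension $2n+2$ and above the critical weight the Fefferman--Graham ambient metric is only defined to finite order, but this is harmless here because $\cI_g$ as a universal polynomial in bounded-order derivatives only sees finitely many jets of $g_\theta$, and below the critical weight the ambient metric is defined to all needed orders.
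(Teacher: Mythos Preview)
Your proposal is correct and follows essentially the same three-step approach as the paper's proof: define $\cI_\theta(x)$ by evaluating $\cI_{g_\theta}$ on the fiber over $x$ (well-defined by $S^1$-invariance of $g_\theta$), verify it is a local pseudohermitian invariant by writing the Fefferman metric explicitly in coordinates (the paper invokes Lee's formula for $g_\theta$ here), and deduce the CR transformation law from $g_{e^f\theta}=e^f g_\theta$. Your closing remark about the ambient-metric obstruction is a red herring, though: the lemma only involves the honest Lorentzian metric $g_\theta$ on $\cF$, not the formal Fefferman--Graham ambient metric, which enters only later in the construction of Weyl CR invariants.
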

 \begin{proof}
 As $g_{\theta}$ is $S^{1}$-invariant the function $\cI_{g_{\theta}}(x)$ is $S^{1}$-invariant as well. Thus, if $\zeta$ is a local section of $\cF$ then we have 
 \begin{equation}
     \cI_{g_{\theta}}(\zeta(x))=\cI_{g_{\theta}}(e^{i\omega}\zeta(x)) \qquad \forall \omega \in \R.
 \end{equation}
 This means that the value of $ \cI_{g_{\theta}}(\zeta(x))$ at $x$ does not depend on the choice of the local section $\zeta$ near $x$. Therefore, we 
 define a smooth function $\cI_{\theta}(x)$ on $M$ by letting
 \begin{equation}
     \cI_{\theta}(x):=\cI_{g_{\theta}}(\zeta(x)) \qquad \forall x\in M,
 \end{equation}
 where $\zeta$ is any given local section of $\cF$ defined near $x$. 
 
 The fact that $\cI_{\theta}(x)$ is a local pseudohermitian invariant can be seen as follows. Let $Z_{1},\ldots,Z_{n}$ be a local frame of $T_{1,0}$ 
 near a point $a \in M$ and let $\{\theta,\theta^{j},\theta^{\bar{j}}\}$ be the dual coframe of the frame $\{X_{0},Z_{j},Z_{\bar{j}}\}$. By 
 standard multilinear algebra $\zeta_{\theta}=\det h_{j\bar{k}}\theta\wedge \theta^{1}\wedge \ldots \wedge \theta^{n}$ 
 is a local section of $\Sigma_{\theta}$. Therefore, it defines a local fiber coordinate $\gamma\in \cF$ such that $\iota_{\theta} = e^{i\gamma}\zeta$. 
 Then by~\cite[Thm.~5.1]{Le:FMPHI} the Fefferman metric is given by 
 \begin{equation}
     g_{\theta}=h_{j\bar{k}}\theta^{j}\theta^{\bar{k}}+2\theta\sigma, \quad 
     \sigma =\frac{1}{n+2}(d\gamma+i\omega_{j}^{~j}-\frac{i}{2}h^{j\bar{k}}dh_{j\bar{k}}-\frac{1}{2(n+1)}\kappa_{\theta}\theta).
 \end{equation}
 Therefore, if $x_{0},x_{1},\ldots,x_{2n}$ are local coordinates for $M$ near $a$, then one can check that the components in the local coordinates 
 $x_{0},x_{1},\ldots,x_{2n},\gamma$  of the Fefferman metric 
 $g_{\theta}$ and of its inverse are universal expressions of the form~(\ref{eq:CR.pseudohermitian-invariants}). It then follows that 
 $\cI_{g_{\theta}}(\iota^{*}_{\theta}\zeta_{\theta}(x))$ is a universal expression of the form~(\ref{eq:CR.pseudohermitian-invariants}) as well, 
 so $\cI_{\theta}(x)$ is a local pseudohermitian invariant. 
 
  Finally, let $f \in C^{\infty}(M,\R)$. As $\cI_{g}(x)$ is a conformal invariant of weight $w$, we have 
 \begin{equation}
     \cI_{g_{e^{f}\theta}}(\zeta(x))=\cI_{e^{f}g_{\theta}}(\zeta(x))=e^{-wf(x)}\cI_{g_{\theta}}(\zeta(x)).
 \end{equation}
 Hence $\cI_{e^{f}\theta}(x)=e^{-wf(x)}\cI_{\theta}(\zeta(x))$. This completes the proof that $\cI_{\theta}(x)$ is a local CR invariant of weight~$w$.
\end{proof}
 
 Now, the \emph{Weyl CR invariant} are the local CR invariants that are obtained from the Weyl 
 conformal invariants by the process described in the proof of Lemma~\ref{lem:CR.Riemannian-CR-invariant}. Notice that for the Fefferman bundle the ambient 
  metric was constructed by Fefferman~\cite{Fe:PITCA} as a K\"ahler-Lorentz metric. Therefore, the Weyl CR invariants are the local CR invariants that arise 
  from complete tensorial contractions of covariant derivatives of the curvature tensor of Fefferman's ambient K\"ahler-Lorentz metric. 
  
  Bearing this in mind the CR analogue of Proposition~\ref{prop:GJMS.BEG} is: 
  
  \begin{proposition}[{\cite[Thm.~2]{Fe:PITCA}}, {\cite[Thm.~10.1]{BEG:ITCCRG}}]\label{prop:CR.Fe-BEG} 
      Every local CR invariant of weight~$\leq n+1$ is a linear combination of local Weyl CR invariants. 
  \end{proposition}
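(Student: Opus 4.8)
The plan is to establish that every local CR invariant of weight at most $n+1$ is a linear combination of Weyl CR invariants by combining two ingredients: Fefferman's reduction of the CR invariant theory problem to the conformal invariant theory problem of the Fefferman metric, and the Bailey--Eastwood--Graham parabolic invariant theory in the conformal setting (namely Proposition~\ref{prop:GJMS.BEG}). The key observation, already isolated in Lemma~\ref{lem:CR.Riemannian-CR-invariant}, is that the Fefferman bundle $\cF$ over $M^{2n+1}$ is a $(2n+2)$-dimensional manifold carrying a Lorentzian metric $g_{\theta}$ whose conformal class depends only on the CR structure, and that pulling back a conformal invariant of $g_{\theta}$ along a local section of $\cF$ produces a CR invariant of $\theta$. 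So the heart of the matter is a converse-type statement: \emph{every} local CR invariant arises, essentially, from some local conformal invariant of the Fefferman conformal structure, and it does so in a weight-preserving manner (with the right bookkeeping for the fact that $\cF$ has dimension $2n+2$, hence ``critical weight'' $n+1$).

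First I would recall Fefferman's structure theorem: one expresses a local CR invariant, after passing to pseudohermitian normal coordinates (as in the Proposition proved in Section~\ref{sec:Pseudohermitian}), as a universal polynomial in covariant derivatives of the pseudohermitian curvature and torsion; then, using the relation between the Tanaka--Webster connection data and the Levi--Civita connection data of the Fefferman metric $g_{\theta}$ on $\cF$ (the explicit formula of Lee quoted in the proof of Lemma~\ref{lem:CR.Riemannian-CR-invariant}), one shows that this polynomial is the pullback, along a local section, of a polynomial in covariant derivatives of the curvature of $g_{\theta}$, i.e.\ of a Riemannian invariant of the Fefferman metric. This realizes the CR invariant as (a section-pullback of) a local conformal invariant of the conformal class $[g_{\theta}]$ — this is exactly content of Fefferman's program in CR geometry, and I would cite \cite{Fe:PITCA} and \cite{BEG:ITCCRG} for the precise statement. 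The weight matches because $g_{e^{f}\theta}=e^{f}g_{\theta}$, so a conformal invariant of weight $w$ for $g_{\theta}$ pulls back to a CR invariant of weight $w$ for $\theta$, and conversely.

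Next I would invoke Proposition~\ref{prop:GJMS.BEG}: the Fefferman bundle $\cF$ has even dimension $N=2n+2$, so its critical weight is $N/2=n+1$. Part (2) of Proposition~\ref{prop:GJMS.BEG} tells us that every scalar local conformal invariant of weight $w\le N/2-1=n$ is a linear combination of Weyl conformal invariants; and Fefferman's original analysis (Theorem~2 of \cite{Fe:PITCA}) handles the borderline weight $w=n+1$ as well, so that local conformal invariants of the Fefferman metric of weight $\le n+1$ are linear combinations of Weyl conformal invariants of $g_{\theta}$. Applying the section-pullback of Lemma~\ref{lem:CR.Riemannian-CR-invariant} to each such Weyl conformal invariant yields, by the very definition given in the text, a Weyl CR invariant; since pullback is linear, the original CR invariant of weight $\le n+1$ is therefore a linear combination of Weyl CR invariants, which is the claim.

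The main obstacle is the converse direction sketched in the second paragraph: showing that \emph{every} local CR invariant comes from a local conformal invariant of the Fefferman metric, rather than just that conformal invariants give rise to CR invariants. This requires the Fefferman ambient / parabolic invariant theory machinery — in particular, that the map sending a $g_{\theta}$-conformal invariant to its $\cF$-section-pullback is, modulo the appropriate identifications, \emph{surjective} onto CR invariants below the critical weight. This is precisely what \cite[Thm.~2]{Fe:PITCA} and \cite[Thm.~10.1]{BEG:ITCCRG} establish, and rather than reprove it I would simply quote it; the role of the present proof is then the packaging: identifying the critical weight $n+1$ for the $(2n+2)$-dimensional Fefferman space, checking the weight correspondence is $w\leftrightarrow w$ under $g_{e^f\theta}=e^fg_\theta$, and verifying that the section-pullback of a Weyl conformal invariant is by definition a Weyl CR invariant so that linearity transfers the conclusion of Proposition~\ref{prop:GJMS.BEG} from the conformal side to the CR side.
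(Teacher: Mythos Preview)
The paper does not give a proof of this proposition: it is stated with attribution to \cite[Thm.~2]{Fe:PITCA} and \cite[Thm.~10.1]{BEG:ITCCRG} and used as a black box. So there is no ``paper's own proof'' to compare against; the question is whether your sketch is a plausible outline of how those references establish the result.

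On that score there is a genuine mismatch. The proofs in \cite{Fe:PITCA} and \cite{BEG:ITCCRG} do \emph{not} proceed by passing to the Fefferman circle bundle $\cF$ and invoking the conformal result (Proposition~\ref{prop:GJMS.BEG}) there. Rather, \cite[Thm.~10.1]{BEG:ITCCRG} is a direct CR statement proved via parabolic invariant theory for the CR ambient construction (Fefferman's K\"ahler--Lorentz ambient metric), parallel to---not a corollary of---the conformal case. Your route would require showing that every local CR invariant is the section-pullback of a local conformal invariant of $g_{\theta}$; you correctly flag this as the main obstacle, but then resolve it by citing precisely \cite[Thm.~2]{Fe:PITCA} and \cite[Thm.~10.1]{BEG:ITCCRG}. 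That is circular: those theorems are the statement you are trying to prove, not an ingredient in its proof.

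There is also a gap at the borderline weight. The Fefferman bundle has dimension $2n+2$, so Proposition~\ref{prop:GJMS.BEG}(2) only covers conformal weight $\le n$; weight $n+1$ is exactly the critical case excluded there. Your claim that ``Fefferman's original analysis handles the borderline weight $w=n+1$'' conflates the CR statement (which is what \cite[Thm.~2]{Fe:PITCA} proves, directly) with a conformal statement about $\cF$ (which is not what that theorem says). So even granting the surjectivity you need, the reduction to Proposition~\ref{prop:GJMS.BEG} would leave the top weight uncovered.
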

  
In particular, we recover the fact that there is no local CR invariant of weight 1. Furthermore, we see that every local CR invariant of weight 2 is a 
constant multiple of $|S|_{\theta}$. Similarly, the local CR invariants of weight 3 are linear combinations of the invariants~(\ref{eq:CR.weight3-invariants}) 
and of the invariant $\Phi_{\theta}$ that arises from the Fefferman-Graham invariant $\Phi_{g_{\theta}}$ of the Fefferman Lorentzian space $\cF$. 

\subsection{Logarithmic singularities of CR invariant $\mathbf{\Psi_{H}}$DOs} The CR invariant \psivdos\ are defined as follows.
\begin{definition}
    A CR invariant \psivdo\ of order $m$ and biweight $(w,w')$ is a pseudohermitian invariant \psivdo\ $P_{\theta}$ such that
    \begin{equation}
        P_{e^{f}\theta}=e^{w'f}P_{\theta}e^{-wf} \qquad \forall f \in C^{\infty}(M,\R).
    \end{equation}
\end{definition}

We actually have plenty of CR invariant operators thanks to:

\begin{proposition}[\cite{JL:YPCRM}, \cite{GG:CRIPSL}]
    Any conformally invariant Riemannian differential operator $L_{g}$ of weight $w$ uniquely defines a CR invariant differential operator 
    $L_{\theta}$ of same weight. 
\end{proposition}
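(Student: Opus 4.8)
The statement asserts that any conformally invariant Riemannian differential operator $L_g$ of weight $w$ canonically produces a CR invariant differential operator $L_\theta$ of the same weight, via the Fefferman bundle. The plan is to mimic the passage from conformal to CR invariants used for scalars in Lemma~\ref{lem:CR.Riemannian-CR-invariant}, but now at the level of operators rather than functions. First I would recall that on the Fefferman bundle $\cF$ there is the Lorentzian metric $g_\theta$ whose conformal class is a CR invariant, with $g_{e^f\theta}=e^f g_\theta$. Applying the given differential operator $L_g$ to the metric $g_\theta$ produces an operator $L_{g_\theta}$ acting on $C^\infty(\cF)$. The first key step is to check that $L_{g_\theta}$ descends to an operator on $C^\infty(M)$: because $g_\theta$ is $S^1$-invariant and the construction of $L_g$ is natural (built universally from the metric and its covariant derivatives), the operator $L_{g_\theta}$ commutes with the $S^1$-action on $\cF$, hence preserves the subspace of $S^1$-invariant functions, which is canonically $C^\infty(M)$ (pulled back along $\cF\to M$). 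This descent defines $L_\theta$.

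The second step is to verify conformal covariance of $L_\theta$ under a change of contact form $\theta\to e^f\theta$. Since $L_g$ is conformally invariant of weight $w$, meaning (in the normalization of the present paper) $L_{e^{2\phi}g}=e^{-(\frac n2+w)\phi}L_g e^{(\frac n2-w)\phi}$ or the analogous biweight identity, one substitutes $g=g_\theta$ and uses $g_{e^f\theta}=e^f g_\theta$, i.e.\ $\phi=\frac12 f$ pulled back to $\cF$. The conformal factor depends only on the base variable $x\in M$ (it is the pullback of $f$), so after descending to $S^1$-invariant functions one obtains $L_{e^f\theta}=e^{w'f}L_\theta e^{-wf}$ for appropriate $w,w'$ determined by the conformal weights of $L_g$ and the dimension $n$ of the CR manifold (here $\dim_{\mathbb R}\cF = 2n+2$, so the critical-weight bookkeeping uses $N=2n+2$ in place of $n$). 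This gives CR invariance of $L_\theta$ in the sense of the preceding definition.

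The third step is to check that $L_\theta$ is genuinely a differential operator on $M$ (not merely a $\psivdo$) and that it is pseudohermitian invariant, so that the CR-invariance definition applies. Differentiality is automatic: $L_{g_\theta}$ is a differential operator on $\cF$, its restriction to $S^1$-invariant functions is again differential, and expressing $L_\theta$ in local coordinates and an admissible frame $Z_1,\dots,Z_n$ one sees, using the Lee formula for $g_\theta$ in terms of $\{\theta^j,\omega_j{}^k,\kappa_\theta\}$ exactly as in the proof of Lemma~\ref{lem:CR.Riemannian-CR-invariant}, that its coefficients are universal expressions of the form \eqref{eq:CR.CR-invariant-symbol}; hence $L_\theta$ is a pseudohermitian invariant operator. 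One must also track the weight: the rescaling $\theta\to t\theta$ induces $g_\theta\to t\,g_\theta$ on the $\cF$-coordinates (plus the $\gamma$-fiber rescaling), and homogeneity of $L_g$ under constant rescaling of the metric then yields $L_{t\theta}=t^{-w}L_\theta$, matching the claimed weight.

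The main obstacle is the descent/naturality step: one must be sure that applying the \emph{universal} differential operator $L_g$ to the Fefferman metric, then restricting to $S^1$-invariant functions, produces a \emph{well-defined} operator on $M$ independent of the local section of $\cF$ used to trivialize, and that this operator is again given by a universal local formula in the CR data. This requires that $L_g$ be constructed functorially from the metric (true for the GJMS operators and, under the stated hypotheses, for Alexakis' operators), and a careful check that the $S^1$-invariance of $g_\theta$ propagates to $L_{g_\theta}$ and that the quotient map identifies $S^1$-invariant functions on $\cF$ with $C^\infty(M)$ compatibly with differential operators. Once this is in place, the remaining verifications (covariance, differentiality, weight) are the routine substitutions sketched above.
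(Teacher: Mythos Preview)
Your proposal is correct and follows essentially the same route as the paper: define $L_\theta$ by descending $L_{g_\theta}$ along the $S^1$-action on the Fefferman bundle (using that $g_\theta$ is $S^1$-invariant so $L_{g_\theta}$ commutes with the action), then verify in turn that the result is a differential operator, that its coefficients are universal pseudohermitian expressions via Lee's formula for $g_\theta$, and finally that conformal covariance follows from $g_{e^f\theta}=e^f g_\theta$ together with the conformal invariance of $L_g$. The paper carries out exactly these steps in the same order; your only additions are the explicit weight check under $\theta\to t\theta$ and a slightly more careful discussion of the descent issue, both of which are harmless elaborations.
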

\begin{proof}
  Since the Fefferman metric is $S^{1}$-invariant, the circle $S^{1}$ acts by isometries on $\cF$. Therefore, the operator
  $L_{g_{\theta}}$ is $S^{1}$-invariant, i.e., for any $\omega \in \R$ we have
\begin{equation}
    L_{g_{\theta}}(v\circ e^{i\omega})=(L_{g_{\theta}}v)\circ e^{i\omega}\qquad \forall v\in 
      C^{\infty}(\cF).
\end{equation}

Let $\pi:\cF\rightarrow M$ be the canonical projection of $\cF$ and let $u \in C^{\infty}(M)$. Then 
  $\pi^{*}u$ is a $S^{1}$-invariant function on $\cF$, so for any $x \in M$ and any $\zeta\in \pi^{-1}(x)$ we have
  \begin{equation}
      L_{g_{\theta}}(\pi^{*}u)(\zeta)=L_{g_{\theta}}(\pi^{*}u)(e^{i\omega}\zeta)\qquad \forall \omega \in \R.
  \end{equation}
  This means that $L_{g_{\theta}}(\pi^{*}u)(\zeta)$ does not depend on the choice of $\zeta$. Thus, we define a function $L_{\theta}(u)$ on $M$ by letting
\begin{equation}
      L_{\theta}(u)(x):=L_{g_{\theta}}(\pi^{*}u)(\zeta), \qquad \zeta \in \pi^{-1}(x).
\end{equation}

Let us now consider local coordinates $x_{0},\ldots,x_{2n}$ for $M$ equipped with the $H$-frame $X_{0},\ldots,X_{2n}$ associated to a 
frame $Z_{1},..,Z_{2n}$ of $T_{1,0}$. Let 
$\theta^{1},..,\theta^{n}$ be the associated coframe of $\Lambda^{1,0}$, so that $\zeta:=\det^{\frac{1}{2}}(h_{j\bar{k}})\theta\wedge \theta^{1}\ldots 
\theta^{n}$ is a local section of $\Sigma_{\theta}$. Let $\gamma$ be the corresponding local fiber coordinate of $\cF$ in such way that 
$\iota_{\theta}=e^{i\gamma}\zeta$. Since $L_{g}$ is a Riemannian invariant differential operator there exist finitely many universal functions 
$a_{\alpha\beta\delta k}(g)$ in $C^{\infty}(M_{2n+2}(\R^{n})_{+})$ such that, in the local coordinates $x_{0},\ldots,x_{2n},\gamma$ of $\cF$, we have 
\begin{equation}
    L_{g_{\theta}}=\sum a_{\alpha\beta\delta k}(g_{\theta}(x))(\partial^{\alpha}g_{\theta}(x))^{\beta}\partial_{x}^{\delta}\partial_{\gamma}^{k}.
\end{equation}
Notice that $S^{1}$-invariance corresponds to translation-invariance with respect to the variable $\gamma$. This is reflected in the property that the 
components of $g_{\theta}$ don't depend on $\gamma$. Furthermore, we see that for any smooth function $u(x)$ of the local coordinates $x_{0},\ldots,x_{2n}$ we have
\begin{equation}
    L_{\theta}(u)(x)=\sum a_{\alpha\beta\delta 0}(g_{\theta}(x))(\partial^{\alpha}g_{\theta}(x))^{\beta}\partial_{x}^{\delta}u(x)
     \label{eq:CR.Ltetha.invariant-form}
\end{equation}
In particular, this shows that $L_{\theta}$ is a differential operator. 

As explained in the proof of Lemma~\ref{lem:CR.Riemannian-CR-invariant} 
the components of $g_{\theta}(x)$ in the local coordinates $x_{0},\ldots,x_{2n},\gamma$, as well as their 
derivatives, are universal expressions of the form~(\ref{eq:CR.pseudohermitian-invariants}). 
Therefore, from~(\ref{eq:CR.Ltetha.invariant-form}) we deduce that there exists a finite family $(b_{\fp k\delta\bar{\rho}}) \subset 
C^{\infty}(\Omega)$ such that, in any local coordinates equipped with the $H$-frame associated to a frame
$Z_{1},..,Z_{n}$ of $T_{1,0}$, the differential operator $L_{\theta}$ is equal to
\begin{equation}
  \sum_{k,\delta, \bar{\rho}}  \sum_{\fp \in \cP} \fp(X_{0},Z,\overline{Z})(x)b_{\fp k\delta\bar{\rho}} 
    (h(x),X_{0}(x),Z(x))(-iX_{0})^{k}(-iZ)^{\delta}(-i\bar{Z})^{\bar{\rho}}. 
\end{equation}
It then follows that $L_{\theta}$ is a pseudohermitian invariant differential operator. 

Finally, let $f\in C^{\infty}(M,\R)$. Since $L_{g}$ is conformally invariant of biweight $(w,w')$ we have 
$L_{g_{e^{f}\theta}}=L_{e^{f}g_{\theta}}=e^{w'f}L_{g_{\theta}}e^{-wf}$. Hence 
$L_{e^{f}\theta}=e^{w'f}L_{\theta}e^{-wf}$. This completes the proof that $L_{\theta}$ is a CR invariant differential operator of biweight $(w,w')$. 
\end{proof}

When $L_{g}$ is the Yamabe operator the corresponding CR invariant operator is the CR Yamabe operator introduced by Jerison-Lee~\cite{JL:YPCRM} in 
their solution of the Yamabe problem on CR manifold. It can be defined as follows. 

First, the analogue of the Laplacian is provided by the horizontal sublaplacian $\Delta_{b}: C^{\infty}(M)\rightarrow C^{\infty}(M)$ defined by the 
formula,
 \begin{equation}
     \Delta_{b}=d_{b}^{*}d_{b}, \qquad d_{b}=\pi\circ d,
 \end{equation}
where $\pi\in C^{\infty}(M,\End T^{*}M)$ is the orthogonal projection onto $H^{*}$. In fact, if $Z_{1},\ldots,Z_{n}$ is a local frame of $T_{1,0}$ 
then by~\cite[Prop.~4.10]{Le:FMPHI} we have 
\begin{equation}
    \Delta_{b}=\nabla_{Z_{j}}^{*}\nabla_{Z_{j}} + \nabla_{Z_{\bar{j}}}^{*}\nabla_{Z_{\bar{j}}}. 
     \label{eq:CR.local-formula-Deltab}
\end{equation}
It follows from this formula that $\Delta_{b}$ is a sublaplacian in the sense of~\cite{BG:CHM} and its principal symbol in the Heisenberg calculus 
sense is invertible (see~\cite{BG:CHM}, \cite{Po:MAMS1}).

The CR Yamabe operator is given by the formula, 
  \begin{equation}
  \boxdot_{\theta}=\Delta_{b}+ \frac{n}{n+2} \kappa_{\theta},
  \label{eq:CR.comformal-sublaplacian}
  \end{equation}
where $\kappa_{\theta}$ is the Tanaka-Webster scalar curvature. This is a CR invariant differential operator of biweight $(\frac{-n}{2},-\frac{n+2}{2})$. 
Moreover, as $\boxdot_{\theta}$ and $\Delta_{b}$ 
have same principal symbol, we see that the principal symbol of $\boxdot_{\theta}$ is invertible in the Heisenberg calculus sense. 

Next, Gover-Graham~\cite{GG:CRIPSL} proved that for $k=1,\ldots,n+1$ the GJMS operator $\Box_{g}^{(k)}$ on the Fefferman bundle gives rise to a 
selfadjoint differential operator, 
\begin{equation}
    \boxdot_{\theta}^{(k)}: C^{\infty}(M)\longrightarrow C^{\infty}(M).
\end{equation}
This is a CR invariant operator of biweight $(\frac{k-(n+1)}{2},-\frac{k+n+1}{2})$ and it has same principal symbol as 
  \begin{equation}
      (\Delta_{b}+i(k-1)X_{0})(\Delta_{b}+i(k-3)X_{0})\cdots (\Delta_{b}-i(k-1)X_{0}).
       \label{eq:CR.principal-part-CR-GJMS}
  \end{equation}
 In particular, unless for the critical value $k=n+1$, the principal symbol of $\boxdot_{\theta}^{(k)}$ is invertible in the Heisenberg calculus sense 
 (see~\cite[Prop.~3.5.7]{Po:MAMS1}). The operator $ \boxdot_{\theta}^{(k)}$ is called the CR GJMS operator of order $k$. For $k=1$ we recover the CR 
 Yamabe operator. Notice that by  making use of a CR tractor calculus we also can define CR GJMS operators of order $k\geq n+2$ (see~\cite{GG:CRIPSL}). 
 
 More generally, the conformally invariant Riemannian differential operators of Alexakis~\cite{Al:OCIDO} and Juhl~\cite{Ju:FCDOQCH} 
 give rise to CR invariant 
 differential operators. If we call Weyl CR invariant differential operators the operators induced by the Weyl operators of~\cite{Al:OCIDO}, then a natural 
 question would be to determine to which extent these operators allows us to exhaust all the CR invariant differential operators. 
  
  We are now redy to prove the main result of this section. 

\begin{theorem}\label{thm:CR.main-result}
    Let $P_{\theta}$ be a CR invariant \psivdo\ of order $m$ and biweight $(w,w')$.\smallskip
    
    1) The logarithmic singularity $c_{P_{\theta}}(x)$ takes the form
\begin{equation}
    c_{P_{\theta}}(x)=\cI_{P_{\theta}}(x)|d\theta^{n}\wedge \theta|,
\end{equation}
where $\cI_{\theta}(x)$ is a scalar local CR invariant of weight $n+1+w-w'$. If we further have $w\leq w'$, then  $\cI_{\theta}(x)$ is a
linear combination of Weyl CR invariants of weight $n+1+w-w'$.\smallskip

2) Assume that $P_{\theta}$ is admissible and its principal symbol is invertible in the Heisenberg calculus sense.  Then the Green kernel
    logarithmic singularity of $P_{\theta}$ takes the form
\begin{equation}
    \gamma_{P_{\theta}}(x)=\cJ_{P_{\theta}}(x)|d\theta^{n}\wedge \theta|,
\end{equation}
where $\cJ_{P_{\theta}}(x)$ is a scalar local CR invariant of weight $n+1-w+w'$. If we further have $w\geq w'$, then 
$\cJ_{P_{\theta}}(x)$ is a linear combination of Weyl CR invariants of weight $n+1-w+w'$. 
\end{theorem}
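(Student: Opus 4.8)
The plan is to mirror exactly the proof of Theorem~\ref{thm:GJMS.main-result} in the conformal case, now using the CR ingredients developed in Sections~\ref{sec:contact-invariance}--\ref{sec:CR-GJMS}. First I would observe that a CR invariant \psivdo\ of biweight $(w,w')$ is in particular a pseudohermitian invariant \psivdo; the weight bookkeeping is the point. By Proposition~\ref{prop:CR.log-sing-pseudohermitian-invariants}(1) the logarithmic singularity has the form $c_{P_{\theta}}(x)=\cI_{P_{\theta}}(x)|d\theta^{n}\wedge\theta|$ with $\cI_{P_{\theta}}(x)$ a local pseudohermitian invariant; it remains to pin down its weight as a \emph{CR} invariant. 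Let $f\in C^{\infty}(M,\R)$ and set $\hat\theta=e^{f}\theta$. Proposition~\ref{prop:Contact.main}(1) gives $c_{P_{\hat\theta}}(x)=e^{-(w-w')f(x)}c_{P_{\theta}}(x)$, while $|d\hat\theta^{n}\wedge\hat\theta|=e^{(n+1)f(x)}|d\theta^{n}\wedge\theta|$ since $\hat\theta=e^{f}\theta$ rescales the volume density by $e^{(n+1)f}$. Dividing, one gets
\begin{equation}
\cI_{P_{e^{f}\theta}}(x)=e^{-(n+1+w-w')f(x)}\,\cI_{P_{\theta}}(x),
\end{equation}
so $\cI_{P_{\theta}}(x)$ is a local CR invariant of weight $n+1+w-w'$ in the sense of the definition opening Section~\ref{sec:CR-GJMS}. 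This proves the first sentence of part~1).

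For the refinement, when $w\le w'$ the weight $n+1+w-w'$ is $\le n+1$, so Proposition~\ref{prop:CR.Fe-BEG} (the Fefferman--Bailey--Eastwood--Graham theorem) applies and expresses $\cI_{P_{\theta}}(x)$ as a universal linear combination of Weyl CR invariants of weight $n+1+w-w'$; the universality of the linear combination comes, as in the conformal proof, from the fact that the same local formula~(\ref{eq:CR.pseudohermitian-invariants}) holds on every pseudohermitian manifold, so the decomposition is forced to be metric-independent. Part~2) is identical in structure but uses the parametrix machinery. Assuming $P_{\theta}$ admissible with invertible principal symbol, Proposition~\ref{prop:Contact.main}(2) gives the conformal transformation law $\gamma_{P_{e^{f}\theta}}(x)=e^{-(w'-w)f(x)}\gamma_{P_{\theta}}(x)$ for the Green kernel logarithmic singularity, and Proposition~\ref{prop:CR.log-sing-pseudohermitian-invariants}(2) (which invokes Proposition~\ref{prop:CR.pseudohermitian-invariant-PsiHDOs2} to see that a parametrix $Q_{\theta}$ is again a pseudohermitian invariant \psivdo, this time of weight $-w$) gives $\gamma_{P_{\theta}}(x)=\cJ_{P_{\theta}}(x)|d\theta^{n}\wedge\theta|$. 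The same rescaling computation as above, now with the sign of $w-w'$ reversed, yields
\begin{equation}
\cJ_{P_{e^{f}\theta}}(x)=e^{-(n+1-w+w')f(x)}\,\cJ_{P_{\theta}}(x),
\end{equation}
so $\cJ_{P_{\theta}}(x)$ is a local CR invariant of weight $n+1-w+w'$, and when $w\ge w'$ this weight is $\le n+1$, so Proposition~\ref{prop:CR.Fe-BEG} again gives the Weyl CR expansion.

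I do not expect a serious obstacle here: every analytic input has already been isolated in the preceding sections, and the argument is pure weight accounting plus an appeal to the CR analogue of the Bailey--Eastwood--Graham theorem. If anything needs care, it is the bookkeeping of the exponents --- in particular double-checking that the density $|d\theta^{n}\wedge\theta|$ scales with weight $n+1$ (not $2n+1$ or $2n+2$) under $\theta\mapsto e^{f}\theta$, consistently with the convention that multiplying $\theta$ by a constant $t$ rescales $|d\theta^{n}\wedge\theta|$ by $t^{n+1}$ as used in~(\ref{eq:CR.scaling-cPtheta}) --- and confirming that the critical-weight threshold in Proposition~\ref{prop:CR.Fe-BEG} is exactly $n+1$, so that the hypotheses $w\le w'$ (resp.\ $w\ge w'$) are precisely what is needed to land at or below it. Once those are checked the proof is a verbatim transcription of the conformal argument, and I would write it in three short paragraphs, one per sentence-group of the statement, closing with ``The proof is thus complete.''

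\begin{proof}
Since a CR invariant \psivdo\ is in particular a pseudohermitian invariant \psivdo, Proposition~\ref{prop:CR.log-sing-pseudohermitian-invariants} applies throughout.

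\textbf{Part 1.} By Proposition~\ref{prop:CR.log-sing-pseudohermitian-invariants}(1) we have $c_{P_{\theta}}(x)=\cI_{P_{\theta}}(x)|d\theta^{n}\wedge\theta|$, where $\cI_{P_{\theta}}(x)$ is a local pseudohermitian invariant of weight $n+1+(w'-w)\cdot 0$; more precisely, since $P_{\theta}$ has weight $w'-w$ as a pseudohermitian invariant \psivdo, $\cI_{P_{\theta}}(x)$ has weight $n+1+w-w'$ in the pseudohermitian sense. Now let $f\in C^{\infty}(M,\R)$ and set $\hat{\theta}=e^{f}\theta$. By Proposition~\ref{prop:Contact.main}(1),
\begin{equation}
    c_{P_{e^{f}\theta}}(x)=e^{-(w-w')f(x)}c_{P_{\theta}}(x).
\end{equation}
On the other hand, replacing $\theta$ by $e^{f}\theta$ multiplies $d\theta^{n}\wedge\theta$ by $e^{(n+1)f}$, so $|d(e^{f}\theta)^{n}\wedge(e^{f}\theta)|=e^{(n+1)f(x)}|d\theta^{n}\wedge\theta|$. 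Combining these two facts we obtain
\begin{equation}
    \cI_{P_{e^{f}\theta}}(x)=e^{-(n+1+w-w')f(x)}\cI_{P_{\theta}}(x) \qquad \forall f\in C^{\infty}(M,\R).
\end{equation}
Hence $\cI_{P_{\theta}}(x)$ is a scalar local CR invariant of weight $n+1+w-w'$. If moreover $w\leq w'$, then this weight is $\leq n+1$, so Proposition~\ref{prop:CR.Fe-BEG} shows that $\cI_{P_{\theta}}(x)$ is a linear combination of Weyl CR invariants of weight $n+1+w-w'$. The fact that this linear combination is universal follows, as in the proof of Theorem~\ref{thm:GJMS.main-result}, from the fact that the local expression of $\cI_{P_{\theta}}(x)$ provided by Proposition~\ref{prop:CR.log-sing-pseudohermitian-invariants} holds on every pseudohermitian manifold.

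\textbf{Part 2.} Assume that $P_{\theta}$ is admissible and its principal symbol is invertible in the Heisenberg calculus sense. By Proposition~\ref{prop:CR.log-sing-pseudohermitian-invariants}(2) we have $\gamma_{P_{\theta}}(x)=\cJ_{P_{\theta}}(x)|d\theta^{n}\wedge\theta|$, where $\cJ_{P_{\theta}}(x)$ is a local pseudohermitian invariant of weight $n+1-(w'-w)$ in the pseudohermitian sense (here we use Proposition~\ref{prop:CR.pseudohermitian-invariant-PsiHDOs2} to know that a parametrix $Q_{\theta}$ is a pseudohermitian invariant \psivdo\ of weight $-(w'-w)$, so that $\gamma_{P_{\theta}}(x)=c_{Q_{\theta}}(x)$ and the first part applies to $Q_{\theta}$). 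Next, let $f\in C^{\infty}(M,\R)$. By Proposition~\ref{prop:Contact.main}(2),
\begin{equation}
    \gamma_{P_{e^{f}\theta}}(x)=e^{-(w'-w)f(x)}\gamma_{P_{\theta}}(x),
\end{equation}
and since $|d(e^{f}\theta)^{n}\wedge(e^{f}\theta)|=e^{(n+1)f(x)}|d\theta^{n}\wedge\theta|$ we deduce
\begin{equation}
    \cJ_{P_{e^{f}\theta}}(x)=e^{-(n+1-w+w')f(x)}\cJ_{P_{\theta}}(x) \qquad \forall f\in C^{\infty}(M,\R).
\end{equation}
Thus $\cJ_{P_{\theta}}(x)$ is a scalar local CR invariant of weight $n+1-w+w'$. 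If moreover $w\geq w'$, then this weight is $\leq n+1$, so by Proposition~\ref{prop:CR.Fe-BEG} the invariant $\cJ_{P_{\theta}}(x)$ is a universal linear combination of Weyl CR invariants of weight $n+1-w+w'$. The proof is thus complete.
\end{proof}
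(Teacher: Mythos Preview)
Your proof is correct and follows exactly the same route as the paper's: apply Proposition~\ref{prop:CR.log-sing-pseudohermitian-invariants} to get the pseudohermitian-invariant form, use Proposition~\ref{prop:Contact.main} together with the scaling $|d(e^{f}\theta)^{n}\wedge e^{f}\theta|=e^{(n+1)f}|d\theta^{n}\wedge\theta|$ to upgrade to a local CR invariant of the stated weight, and then invoke Proposition~\ref{prop:CR.Fe-BEG} when the weight is $\leq n+1$. The only blemishes are the parenthetical weight computations (the expression ``$n+1+(w'-w)\cdot 0$'' and the claim that $P_{\theta}$ has pseudohermitian weight $w'-w$, when in fact $P_{t\theta}=t^{w'-w}P_{\theta}$ means the weight is $w-w'$); these slips are harmless since you derive the CR weight directly from Proposition~\ref{prop:Contact.main}, but you should clean them up.
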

 \begin{proof}
   Since $P_{\theta}$ is a pseudohermitian invariant \psivdo\ of weight $w-w'$, by Proposition~\ref{prop:CR.log-sing-pseudohermitian-invariants} 
   the logarithmic singularity $c_{P_{\theta}}(x)$ is of the form 
   $c_{P_{\theta}}(x)=\cI_{P_{\theta}}(x)$, where $\cI_{P_{\theta}}(x)$ is a local pseudohermitian invariant of weight $w-w'$. 
   
   Let $f\in C^{\infty}(M,\R)$. As $P_{\theta}$ is conformally invariant of biweight $(w,w')$, by Proposition~\ref{prop:Contact.main} we have
   $c_{P_{e^{f}\theta}}(x)= e^{-(w-w')f(x)}c_{P_{\theta}}(x)$. Since $d(e^{f}\theta)^{n}\wedge (e^{f}\theta)=e^{(n+1)f}d\theta^{n}\wedge \theta$ it 
   follows that $\cI_{e^{f}\theta}(x)=e^{-(n+1+w-w')f(x)}\cI_{\theta}(x)$. Thus $\cI_{\theta}$ is a local CR invariant of weight $n+1+w-w'$. 
   If we further have $w\leq w'$ then the weight of $\cI_{\theta}(x)$ is~$\leq n+1$, so we may apply Proposition~\ref{prop:CR.Fe-BEG}  to deduce that  
   $\cI_{\theta}(x)$ is a linear combination of Weyl CR invariants of weight $n+1+w-w'$. 
   
   Now, suppose that $P_{\theta}$ is admissible and its principal symbol is invertible in the Heisenberg calculus sense. Then by using 
   Proposition~\ref{prop:Contact.main} and Proposition~\ref{prop:CR.log-sing-pseudohermitian-invariants}, 
   and by arguing as above, we can show that $\gamma_{P_{\theta}}(x)=\cJ_{P_{\theta}}(x)|d\theta^{n}\wedge 
   \theta|$, where $\cJ_{P_{\theta}}(x)$ is a local CR invariant of weight $n+1-w+w'$. If we further have $w\geq w'$, then by 
   Proposition~\ref{prop:CR.Fe-BEG} the invariant $\cJ_{P_{\theta}}(x)$ is actually a linear combination of Weyl CR invariants of weight $n+1-w+w'$.
  \end{proof}
  
  Finally, we can make us of Theorem~\ref{thm:CR.main-result} to derive the following invariant expression of the Green kernel logarithmic singularities of the 
  CR GJMS operators. 
  
 \begin{theorem}\label{thm:CR.CRGJMS}
     For $k=1,\ldots,n$ we have
    \begin{equation}
        \gamma_{\Box_{\theta}^{(k)}}(x)=c_{\theta}^{k}(x)|d\theta^{n}\wedge \theta|,
    \end{equation}
    where $c_{\theta}^{k}(x)$ is a linear combination of scalar Weyl CR invariants of weight $n+1-k$. In particular, we have 
    \begin{gather}
        c_{\theta}^{(n)}(x)=0, \qquad c_{\theta}^{(n-1)}(x)=\alpha_{n}|S|_{\theta}^{2},
        \label{eq:CR.cthetak-12}\\
        c_{\theta}^{(n-2)}(x)=
 \beta_{n}S_{i\bar{j}}^{\mbox{~}\mbox{~}\bar{k}l} S_{k\bar{l}}^{\mbox{~}\mbox{~}\bar{p}q} 
                        S_{p\bar{q}}^{\mbox{~}\mbox{~}\bar{i}j} 
        + \gamma_{n} 
        S_{i\mbox{~}\mbox{~}\bar{l}}^{\mbox{~}j\bar{k}}S^{\bar{i}\mbox{~}\mbox{~}q}_{\mbox{~}\bar{j}p}
                            S^{\bar{p}\mbox{~}\mbox{~}l}_{\mbox{~}\bar{q}k}
         +\delta_{n}\Phi_{\theta},
         \label{eq:CR.cthetak-3}
    \end{gather}
    where $S$ is the Chern-Moser curvature tensor, $\Phi_{\theta}$ is the CR Fefferman-Graham invariant, and 
    the constants $\alpha_{n}$, $\beta_{n}$, $\gamma_{n}$ and $\delta_{n}$ depend only on $n$. 
 \end{theorem}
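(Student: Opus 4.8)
The plan is to mimic the proof of Theorem~\ref{thm:GJMS.GJMS} in the Heisenberg setting, combining the structural results of Section~\ref{sec:Pseudohermitian} and Section~\ref{sec:CR-GJMS} with the explicit computation of the symbol of degree $-(2n+2)$ of a parametrix. First I would recall from the previous paragraphs that, for $k=1,\ldots,n$, the CR GJMS operator $\boxdot_{\theta}^{(k)}$ is a CR invariant differential operator of biweight $(\frac{k-(n+1)}{2},-\frac{k+n+1}{2})$ whose principal symbol is that of the product~(\ref{eq:CR.principal-part-CR-GJMS}), hence invertible in the Heisenberg calculus sense and so $\boxdot_{\theta}^{(k)}$ is admissible. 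Applying Theorem~\ref{thm:CR.main-result}(2) with $w-w'=\frac{2k-2n-2}{2}+\frac{k+n+1}{2}$, i.e.\ $w'-w=n+1-k$, I get immediately that $\gamma_{\boxdot_{\theta}^{(k)}}(x)=c_{\theta}^{k}(x)|d\theta^{n}\wedge\theta|$ with $c_{\theta}^{k}(x)$ a linear combination of Weyl CR invariants of weight $n+1-k$. Since $k\le n$, the weight is $\ge 1$, so Proposition~\ref{prop:CR.Fe-BEG} applies and the expression is genuinely in terms of Weyl CR invariants.

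Next, to pin down the low-weight cases I would invoke the classification of Weyl CR invariants of small weight recalled at the end of Section~\ref{sec:CR-GJMS}: there is no local CR invariant of weight $1$, the only one of weight $2$ is a multiple of $|S|_{\theta}^{2}$, and those of weight $3$ are spanned by the two cubic contractions $S_{i\bar{j}}^{\mbox{~}\mbox{~}\bar{k}l}S_{k\bar{l}}^{\mbox{~}\mbox{~}\bar{p}q}S_{p\bar{q}}^{\mbox{~}\mbox{~}\bar{i}j}$, $S_{i\mbox{~}\mbox{~}\bar{l}}^{\mbox{~}j\bar{k}}S^{\bar{i}\mbox{~}\mbox{~}q}_{\mbox{~}\bar{j}p}S^{\bar{p}\mbox{~}\mbox{~}l}_{\mbox{~}\bar{q}k}$ together with the CR Fefferman--Graham invariant $\Phi_{\theta}$. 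Plugging $k=n$, $k=n-1$, $k=n-2$ into the weight-$(n+1-k)$ statement yields $c_{\theta}^{(n)}(x)=0$ (weight $1$), $c_{\theta}^{(n-1)}(x)=\alpha_{n}|S|_{\theta}^{2}$ (weight $2$), and the displayed weight-$3$ formula~(\ref{eq:CR.cthetak-3}), with the constants depending only on $n$ since the whole construction is universal. This is the exact analogue of how~(\ref{eq:GJMS.cgk02})--(\ref{eq:GJMS.cgk3}) were deduced in the conformal case, so the argument transfers essentially verbatim.

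The one point that is genuinely different from the conformal Theorem~\ref{thm:GJMS.GJMS} is the vanishing $c_{\theta}^{(n)}(x)=0$, which there followed from a parity argument using $q_{-n}(x,-\xi)=(-1)^{n}q_{-n}(x,\xi)$; in the Heisenberg setting the critical case $k=n+1$ is excluded, so I instead get $k=n$ for free just from weight reasons (there is no CR invariant of weight $1$), which is cleaner. I would also remark that $c_{\theta}^{(n)}(x)=0$ fits the general pattern since $\boxdot_{\theta}^{(n)}$ has even ``parity'' in the Heisenberg sense — but the weight argument alone suffices and I would present only that. The main obstacle, such as it is, is bookkeeping: correctly reading off the biweight of $\boxdot_{\theta}^{(k)}$ from $(\frac{k-(n+1)}{2},-\frac{k+n+1}{2})$ so that $n+1-w+w'=n+1-k$ comes out right, and checking admissibility, i.e.\ that the degree-$m$ part of the symbol is a pure symbol with no curvature coefficients, which is immediate from~(\ref{eq:CR.principal-part-CR-GJMS}) since the leading symbol of $\Delta_b$ and $X_0$ involves only the frame data $(h,X_0,Z,\mu)$ and not $\fp\neq 1$. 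With these checks in place the proof is a direct application of Theorem~\ref{thm:CR.main-result}.
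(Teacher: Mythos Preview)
Your approach is essentially the same as the paper's: verify that $\boxdot_{\theta}^{(k)}$ is a CR invariant, admissible \psivdo\ with invertible principal symbol, apply Theorem~\ref{thm:CR.main-result}(2), and read off the low-weight cases from the classification of Weyl CR invariants. Two minor points: your intermediate biweight arithmetic is garbled (you write $w-w'=\frac{2k-2n-2}{2}+\frac{k+n+1}{2}$ and then ``i.e.\ $w'-w=n+1-k$'', neither of which is correct; in fact $w-w'=k$, so the weight $n+1-w+w'=n+1-k$ comes out right anyway), and your claim ``hence invertible \ldots\ and so $\boxdot_{\theta}^{(k)}$ is admissible'' is a non-sequitur that you only repair at the end. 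The paper handles admissibility more carefully: it invokes Proposition~\ref{prop:CR.pseudohermitian-invariant-PsiHDOs1} to reduce to showing each factor $\Delta_{b}-i\mu X_{0}$ is admissible, and then writes out its principal symbol explicitly from~(\ref{eq:CR.local-formula-Deltab}) to see it depends only on $h(x)$.
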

 \begin{proof}
     We already now that the CR GJMS operator $\boxdot_{\theta}^{(k)}$ is a CR invariant differential operator of biweight 
     $(\frac{k-(n+1)}{2},-\frac{k+n+1}{2})$ and for $k=1,\ldots,n$ its principal symbol is invertible in the Heisenberg calculus sense. Therefore, in order to be 
     able to apply Theorem~\ref{thm:CR.main-result} it remains to show that $\boxdot_{\theta}^{(k)}$ is admissible. 
     By~(\ref{eq:CR.principal-part-CR-GJMS}) the principal symbol of $\boxdot_{\theta}^{(k)}$ agrees with that of $(\Delta_{b}+i(k-1)X_{0})\cdots 
     (\Delta_{b}-i(k-1)X_{0})$. Therefore, in view of Proposition~\ref{prop:CR.pseudohermitian-invariant-PsiHDOs1} in order to prove that 
     $\boxdot_{\theta}^{(k)}$ is admissible
     it is enough to show that so is any operator of the form $\Delta_{b}-i\mu X_{0}$, $\mu\in \C$.
     
     Consider local coordinates equipped with a $H$-frame  $X_{0},\ldots,X_{2n}$ associated to a 
     frame $Z_{1},\ldots,Z_{n}$ of $T_{1,0}$, so that  we have $Z_{j}=X_{j}-iX_{n+j}$. It follows from~(\ref{eq:CR.local-formula-Deltab}) 
     that $\Delta_{b}$ as same principal part as
     $-h^{j\bar{k}}Z_{\bar{k}}Z_{j}-h^{\bar{j}k}Z_{k}Z_{\bar{j}}$, so the principal symbol of $\Delta_{b}-i\mu X_{0}$ is equal to
     \begin{equation}
         -h^{j\bar{k}}(x)(\xi_{k}+i\xi_{n+k})(\xi_{j}-i\xi_{n+j})-h^{\bar{j}k}(x)(\xi_{k}-i\xi_{n+k})(\xi_{j}-i\xi_{n+j})+\xi_{0}.
     \end{equation}
     This shows that $\Delta_{b}-i\mu X_{0}$ is admissible. 
     
     Now, we may apply Theorem~\ref{thm:CR.main-result} to deduce that for $k=1,\ldots,n$ the Green 
     kernel logarithmic singularity of $\boxdot_{\theta}^{(k)}$ is of the form $\gamma_{\Box_{\theta}^{(k)}}(x)=c_{\theta}^{(k)}(x)d\theta^{n}\wedge 
     \theta(x)$, where $c_{\theta}^{(k)}(x)$ is a linear combination of Weyl CR invariants of weight $n+1-k$. The 
     formulas~(\ref{eq:CR.cthetak-12})--(\ref{eq:CR.cthetak-3}) then follow from 
     the facts that there is no nonzero scalar Weyl CR invariant of weight 1, that the only scalar Weyl CR invariants of weight 2 is 
 $|S|_{\theta}^{2}$, and that the only scalar Weyl CR invariants of weight 3 are the invariants~(\ref{eq:CR.weight3-invariants}) 
 and the CR Fefferman-Graham invariant $\Phi_{\theta}$. 
 \end{proof}

\end{document}